\documentclass[reqno]{amsart}
\usepackage{amssymb}
\usepackage{pdfsync}
\usepackage[usenames]{color}
\setlength{\textwidth}{125mm}
\setlength{\textheight}{185mm}
\overfullrule=1pt

\numberwithin{equation}{section}

\newtheorem{theorem}{Theorem}[section]
\newtheorem{proposition}[theorem]{Proposition}
\newtheorem{corollary}[theorem]{Corollary}

\theoremstyle{definition}

\theoremstyle{definition} 
\newtheorem{remark}[theorem]{Remark}
\newtheorem{remarks}[theorem]{Remarks}
\newtheorem{example}[theorem]{Example}






\newcommand{\bea}{\begin{eqnarray}}
\newcommand{\eea}{\end{eqnarray}}
\newcommand{\beas}{\begin{eqnarray*}}
\newcommand{\eeas}{\end{eqnarray*}}
\newcommand{\beq}{\begin{equation}}
\newcommand{\eeq}{\end{equation}}

\def\ein{\hookrightarrow}

\def\eps{\varepsilon}

\newcommand{\cA}{\mathcal A}
\newcommand{\cB}{\mathcal B}

\newcommand{\cD}{\mathcal D}
\newcommand{\cE}{\mathcal E}

\newcommand{\cH}{\mathcal H}
\newcommand{\cR}{\mathcal R}

\newcommand{\cN}{\mathcal N}
\newcommand{\cMH}{\mathcal{MH}}
\newcommand{\cSM}{\mathcal{SM}}
\newcommand{\cX}{\tilde{Z}}
\newcommand{\BB}{\mathbb B}
\newcommand{\C}{\mathbb C}
\newcommand{\R}{\mathbb R}

\newcommand{\NN}{\mathbb N}
\newcommand{\LL}{\mathbb L}
\newcommand{\EE}{\mathbb E}
\newcommand{\FF}{\mathbb F}
\newcommand{\TT}{\mathbb T}

\newcommand{\XX}{\mathbb X}

\newcommand{\Ver}{|\!|}

\newcommand{\ve}{\varepsilon}

\newcommand{\cxx}{\textsf{x}}
\newcommand{\cy}{\sf{y}}



\DeclareMathSymbol{\complement}{\mathord}{AMSa}{"7B}


\def\vv<#1>{\langle #1\rangle}
\def\Vv<#1>{\bigl\langle #1\bigr\rangle}


\begin{document}


\title[Qualitative behavior of Stefan problems with surface tension]
{Qualitative behavior of solutions\\ for thermodynamically consistent
Stefan problems with surface tension}

\author[J.~Pr\"uss]{Jan Pr\"uss}
\address{Institut f\"ur Mathematik \\
         Martin-Luther-Universit\"at Halle-Witten\-berg\\
         D-60120 Halle, Germany}
\email{jan.pruess@mathematik.uni-halle.de}

\author[G.~Simonett]{Gieri Simonett}
\address{Department of Mathematics\\
         Vanderbilt University \\
         Nashville, TN~37240, USA}
\email{gieri.simonett@vanderbilt.edu}

\author[R.~Zacher]{Rico Zacher}
\address{Institut f\"ur Mathematik \\
         Martin-Luther-Universit\"at Halle-Witten\-berg\\
         D-60120 Halle, Germany}
\email{rico.zacher@mathematik.uni-halle.de}

\thanks{The research of G.S.\ was partially
supported by the NSF Grant DMS-0600870. The research of R.Z.\ was
partially supported by the Deutsche Forschungsgemeinschaft (DFG)}

\subjclass{Primary: 35R35, 35B35, 35K55; Secondary: 80A22}
 \keywords{Phase transition, free boundary problem,
 Gibbs-Thomson law, stability, instability, Ostwald ripening}

\begin{abstract}
The qualitative behavior of a thermodynamically consistent two-phase
Stefan problem with surface tension and with or without kinetic
undercooling is studied. It is shown that these problems generate
local semiflows in well-defined state manifolds. If a solution does
not exhibit singularities in a sense made precise below, it is
proved that it exists globally in time and its orbit is relatively compact. 
In addition, stability and
instability of equilibria is studied. In particular, it is shown
that multiple spheres of the same radius are unstable, reminiscent
of the onset of Ostwald ripening.
\end{abstract}
\maketitle
\vspace{-.5cm}
\section{Introduction}
The Stefan problem is a model for phase transitions
in liquid-solid systems and accounts for heat diffusion and
exchange of latent heat in a homogeneous medium.
The strong formulation of this model corresponds to a
free boundary problem involving a parabolic diffusion
equation for each phase and transmission conditions prescribed at
the interface separating the phases.
\smallskip

a) In order to describe the physical situation,
let us consider a domain $\Omega$ that is occupied by a liquid and a solid
phase, say water and ice, that are separated by an interface
$\Gamma$.  Due to melting or freezing, the corresponding regions
occupied by water and ice
will change and, consequently, the interface $\Gamma$ 
will also change its position and shape.
This leads to a free boundary problem. 

The basic physical law governing this process is conservation of energy. 
The unknowns are the temperatures $v_i$, $i=1,2,$ of the two phases, 
and the position of the interface $\Gamma$
separating the phases.
The conservation laws can then be expressed by
a diffusion equation for $v_i$
in the respective regions $\Omega_i$ occupied by the different phases,
and by the so-called Stefan condition which accounts for
the exchange of latent heat due to melting or solidifying.
In the {\sl classical} Stefan problem one assumes in addition that the temperatures
$v_i$ coincide with the melting temperature at the interface 
$\Gamma$ (where the two phases are in contact), that is, one requires 
\begin{equation}
\label{Gamma}
v_1=v_2=0\quad\text{on}\quad \Gamma,
\end{equation}
where $0$ is the (properly scaled) melting temperature.
Molecular considerations 
suggest that the condition \eqref{Gamma}
on the free boundary $\Gamma$ be replaced by the
{\sl Gibbs-Thomson correction}
\begin{equation}
\label{GT}
v_1=v_2=-\sigma \cH\quad\text{on}\quad \Gamma,
\end{equation}
where $\sigma$ is a positive constant, called surface tension,
and where $\cH$ denotes the mean curvature of
$\Gamma$. 
We will occasionally refer to the Stefan problem with
condition \eqref{GT} as the {\sl classical Stefan problem with
surface tension}.

It should be emphasized that the Stefan problem 
with Gibbs-Thomson correction \eqref{GT} differs from the
classical Stefan problem in a  much more fundamental way
than just in the modification of an interface condition.
This becomes evident, for instance, by the fact that
the classical Stefan problem allows for a comparison principle,
a property that is no longer shared by the 
Stefan problem with surface tension.
A striking difference is also provided
by the fact that
in the classical Stefan model, the temperature 
completely determines the phases, that is, the liquid
region can be characterized by the condition $v>0$,
whereas $v<0$ characterizes the solid region.
The inclusion of surface tension will no longer allow
to determine the phases merely by the sign of $v$.

The main reason for introducing the Gibbs-Thomson correction 
\eqref{GT} stems from the need to account for so-called \emph{supercooling},
in which a fluid supports temperatures below its freezing point,
or \emph{superheating}, the analogous phenomena for solids;
or dendrite formation, in which simple shapes evolve into 
complicated tree-like structures. 
The effect of supercooling can be in the order of hundreds of 
degrees for certain materials, see \cite[Chapter 1]{Ch77} and \cite{Vi96}.
We also refer to \cite{Ca86, Ch77, FR91, Gu86, Gu88, Gu88b, Har73, La80, MS63, MS64,Vi93}
for additional information.

The Stefan problem has been studied in the mathematical
literature for over a century, see \cite{Ru71, Me92} and
\cite[pp. 117--120]{Vi96} for a historic account.
The classical Stefan problem 
is known to admit  unique long time weak solutions, see for instance 
\cite{Fr68,Fr82,Ka65} and \cite[pp. 496--503]{LSU68}.  
It is important to point out that
the existence of weak solutions is closely tied 
to the maximum principle.
\\
Important results concerning the regularity of weak solutions for  
the multidimensional classical
{one-phase} Stefan problem  
were established in 
\cite{Ca77,Ca78,CF79,FK75,KN77,KN78,Ma82},
and regularity results for
the classical {two-phase} Stefan problem 
are contained in   
\cite{ACS96,ACS98,CE83,DiB80,DiB82,Na87,Sa83,Zi82},
to list only a few references.
We remark that classical solutions
for the Stefan problem with condition \eqref{Gamma}
were first established in \cite{Han81,Me80}.
We also refer to \cite{PSS07} for a more detailed account of the literature concerning
the classical Stefan problem.

Although the Stefan problem with the
Gibbs-Thomson correction \eqref{GT} has been around
for many decades, only few analytical results 
concerning existence, regularity and qualitative properties of solutions are known.
The authors in \cite{FR91} consider the case
with small surface tension $0<\sigma\ll 1$ 
and linearize the problem about $\sigma=0$. 
Assuming the existence of smooth solutions for the case $\sigma=0$, that is, 
for the classical Stefan problem, the authors prove existence
and uniqueness of a weak solution for the 
\textit{linearized} problem and
then investigate the effect of small surface tension on
the shape of $\Gamma(t)$. 
Existence of long time weak solutions is established in \cite{AW00,Luc90,Rog04}.
A more detailed discussion will be given below. 
A proof for existence -- without uniqueness -- of  local time classical solutions 
is obtained in \cite{Ra91,Ra93}. In \cite{Me94}, the way in which a spherical ball of ice 
in a supercooled fluid melts down is investigated. 
The case of a strip-like geometry, where the free surface $\Gamma$ is given as the graph
of a function, is considered in \cite{EPS03}, and existence as well as uniqueness of  local time
classical solutions is established. Moreover, it is shown that solutions instantaneously regularize 
to become
analytic in space and time. The approach is based on the theory of maximal regularity,
which also forms the basis for the local existence theory in the current paper.
In \cite{PrSi08} linearized stability and instability of equilibria is studied.
Finally, the authors in \cite{HaGu10} consider a strip-like geometry over a torus
and establish asymptotic stability of flat surfaces.

If the diffusion equation $\partial_{t}v_i-\Delta v_i=0$ 
in $\Omega_i$ 
is replaced by the elliptic equation $\Delta v_i=0$,
then the resulting problem is the quasi-stationary 
Stefan problem with surface tension, which has also been 
termed the Mullins-Sekerka problem
(or the Hele-Shaw problem with surface tension). 
Existence, uniqueness, regularity (and global existence in some cases) 
of classical solutions for the 
quasi-stationary approximation has recently been investigated in 
\cite{Ba97,Che93,CHY96,ES96a,ES96b,ES97,ES98,FR01}.
Global existence of weak solutions has been established in \cite{Rog05},
see also \cite{LuSt97, GaSt98} for related results.

The challenge of developing efficient and accurate numerical methods
for free boundary problems arising from sharp-interface theories
has recently driven the development of regularized diffuse-interface, or phase field, theories.
It is of outmost importance to have a through understanding of the
sharp-interface models in order to evaluate the quality of predictions
of the associated phase field models.
As remarked in \cite{Gur07}, a phase field theory may in general possess
a variety of sharp-interface limits, and in the absence of a sound
sharp-interface theory to serve as a target, the problem of developing
a physically meaningful diffuse-interface theory is ill-posed.
\vspace{1mm}

b) In this paper we consider a general model for phase transitions that is thermodynamically consistent,
following the ideas in \cite{Gur07} and \cite{Ish06}, see also \cite{Gu86,Gu88,Gu88b} for earlier work. 
It involves the thermodynamic quantities of absolute temperature, free energy, internal energy, and entropy,
and is complemented by constitutive equations for the free energies and the heat fluxes in the bulk regions.
An important assumption is that there be no entropy production on the interface.
In particular, the interface is assumed to carry no mass and no energy
except for surface tension. 

To be more precise, let $\Omega\subset \R^{n}$ be a bounded domain of class $C^{2}$, $n\geq2$.
$\Omega$ is occupied by a material that can undergo phase changes: at time $t$, phase $i$ occupies
the subdomain $\Omega_i(t)$ of
$\Omega$, respectively, with $i=1,2.$
We assume that $\partial \Omega_1(t)\cap\partial \Omega=\emptyset$; this means
that no {\em boundary contact} can occur.
The closed compact hypersurface $\Gamma(t):=\partial \Omega_1(t)\subset \Omega$
forms the interface between the phases.
By the {\em Stefan problem with surface tension} we mean the following problem:
find a family of closed compact hypersurfaces $\{\Gamma(t)\}_{t\geq0}$ contained in $\Omega$
and an appropriately smooth function $u:\R_+\times\bar{\Omega}\rightarrow\R$ such that
\begin{equation}
\label{stefan}
\left\{\begin{aligned}
\kappa (u)\partial_t u-{\rm div}(d(u)\nabla u)&=0 &&\text{in}&&\Omega\setminus\Gamma(t)\\
\partial_{\nu_\Omega} u &=0 &&\text{on}&&\partial \Omega \\
[\![u]\!]&=0 &&\text{on}&&\Gamma(t)\\
[\![\psi(u)]\!]+\sigma \cH &=\gamma(u) V &&\text{on}&&\Gamma(t) \\
[\![d(u)\partial_\nu u]\!] &=(l(u)-\gamma(u)V) V &&\text{on}&&\Gamma(t)\\
 u(0)&=u_0 &&\\
 \Gamma(0)&=\Gamma_0. &&
\end{aligned}\right.
\end{equation}
Here
 $u(t)$ denotes the (absolute) temperature,
 $\nu(t)$ the outer normal field of $\Omega_1(t)$,
 $V(t)$ the normal velocity of $\Gamma(t)$,
 $\cH(t)=\cH(\Gamma(t))=-{\rm div}_{\Gamma(t)} \nu(t)/(n-1)$ the mean curvature of $\Gamma(t)$, and
 $[\![v]\!]=v_2|_{\Gamma(t)}-v_1|_{\Gamma(t)}$ the jump of a quantity $v$ across $\Gamma(t)$.
The sign of the mean curvature $\cH$ is chosen to be negative at a point $x\in\Gamma$ if
$\Omega_1\cap B_r(x)$ is convex for some sufficiently small $r>0$. Thus if $\Omega_1$ is a ball
of radius $R$ then $\cH=-1/R$ for its boundary $\Gamma$.

Several quantities are derived from the free energies $\psi_i(u)$ as follows:
\begin{itemize}
\item
 $\epsilon_i(u)= \psi_i(u)+u\eta_i(u)$, the internal energy in phase $i$,
\item
 $\eta_i(u) =-\psi_i^\prime(u)$, the entropy,
\item
 $\kappa_i(u)= \epsilon^\prime_i(u)=-u\psi_i^{\prime\prime}(u)>0$, the  heat capacity,
\item
$l(u)=u[\![\psi^\prime(u)]\!]=-u[\![\eta(u)]\!]$, the latent heat.
\end{itemize}
Furthermore, $d_i(u)>0$ denotes the coefficient of heat conduction in Fourier's law,
 $\gamma(u)\geq0$ the coefficient of  kinetic undercooling, and $\sigma>0$ the coefficient of surface tension.
As is commonly done, we assume that there exists a unique (constant) {\em melting temperature} $u_m$,
characterized by the equation $[\![\psi(u_m)]\!]=0.$
Finally, system \eqref{stefan} is to be completed by constitutive equations for
the free energies $\psi_i$ in the bulk phases $\Omega_i(t)$. 

In the sequel we drop the index $i$, as there is no danger of confusion; we just keep in mind that the coefficients depend on the phases.
The temperature is assumed to be continuous across the interface, 
as indicated by the condition $[\![u]\!]=0$ in \eqref{stefan}.
However, the free energy and the conductivities depend on the respective phases,
and hence the jumps $[\![\psi(u)]\!]$,
$[\![\kappa(u)]\!]$, $[\![\eta(u)]\!]$, $[\![d(u)]\!]$ are in general non-zero at the interface.
In this paper we assume that the coefficient of surface tension is constant. 

Next we show that the model \eqref{stefan} is consistent with the first and second law of thermodynamics, 
postulating conservation of energy and growth of entropy.
Indeed,
the {\sf total energy} of the system is given by
\begin{equation}
\label{energy}
{\sf E}(u,\Gamma) = \int_{\Omega\setminus\Gamma} \epsilon(u)dx +
\frac{1}{n-1}\int_\Gamma \sigma\, ds,
 \end{equation}
 and by the transport and surface transport theorem we have for smooth solutions
 \begin{align*}
 \frac{d}{dt}{\sf E}(u(t),\Gamma(t)) &= -\int_\Gamma \{[\![d(u)\partial_\nu u]\!] +[\![\epsilon(u)]\!]V
  + \sigma \cH V\}\,ds\\
&= -\int_\Gamma \{[\![d(u)\partial_\nu u]\!] -(l(u)-\gamma(u)V))V\}\,ds=0,
 \end{align*}
and thus, energy is conserved. 

Let us point out that it is essential
that $\sigma>0$ is constant, i.e.\ is independent of temperature.
The reason for this lies in the fact that in case
 $\sigma=\sigma(u)$ depends on the temperature, the surface energy will be $\int_\Gamma \epsilon_\Gamma(u)\,ds$ instead
 of $\int_\Gamma\sigma \,ds$, where $\epsilon_\Gamma(u)=\sigma(u)+u\eta_\Gamma(u)$, $\eta_\Gamma(u)=-\sigma^\prime(u)$, and one has to
take into account the surface entropy $\int_\Gamma \eta_\Gamma \,ds$ as well as balance of surface energy. This means
that the Stefan law needs to be replaced by a dynamic boundary condition of the form
\begin{equation}
\label{sigma(u)}
\kappa_\Gamma(u) \partial_{t,n} u - {\rm div}_\Gamma (d_\Gamma(u)\nabla_\Gamma u)
= [\![d\partial_\nu u]\!] -\big(l(u)-\gamma(u)V+l_\Gamma(u)\cH\big)V,
\end{equation}
where $\partial_{t,n}$ denotes the time derivative in normal
direction, $\kappa_\Gamma(u)=\epsilon^\prime_\Gamma(u)$ and
$l_\Gamma(u)= u\sigma^\prime(u)$. We intend to study such more
complex problems elsewhere and restrict our attention to the case
of constant $\sigma$ here.

The fifth equation in \eqref{stefan} is usually called the {\em
Stefan law}. It shows that energy is conserved across the interface.
The fourth equation is the {\em Gibbs-Thomson law} (with
kinetic undercooling if $\gamma(u)>0$) which implies together with
Stefan's law that entropy production on the interface is nonnegative
if $\gamma\geq0$. In case $\gamma\equiv 0$, i.e.\ in the absence of
kinetic undercooling,  there is no entropy production on the
interface. In fact,
the {\sf total entropy} of the system, given by
 \begin{equation}
 \label{entropy}
 \Phi(u,\Gamma)= \int_{\Omega\setminus\Gamma} \eta(u)dx,
 \end{equation}
 satisfies
 \begin{align*}
 \frac{d}{dt}\Phi(u(t),\Gamma(t)) &=\int_\Omega\frac{1}{u^2} d(u)|\nabla u|^2\,dx
 - \int_\Gamma \frac{1}{u}\{ [\![d(u)\partial_\nu u]\!]+u[\![\eta(u)]\!]V\}\,ds\\
 &=\int_\Omega \frac{1}{u^2}d(u)|\nabla u|^2\, dx
 + \int_\Gamma \frac{1}{u}\gamma(u) V^2\,ds\ge 0.
 \end{align*}
In particular, the negative total entropy is a Lyapunov functional
for problem \eqref{stefan}.
Even more,
$-\Phi$ is a strict Lyapunov functional in the sense that it is strictly decreasing along
 smooth solutions which are non-constant in time.
Indeed, if at some time
$t_0\geq0$ we have
 \begin{equation*}
 \frac{d}{dt}\Phi(u(t_0),\Gamma(t_0)) =\int_\Omega \frac{1}{u^2}d(u)|\nabla u|^2\, dx
 + \int_\Gamma \frac{1}{u}\gamma(u) V^2\,ds = 0,
 \end{equation*}
then $\nabla u(t_0)=0$
in $\Omega$ and $\gamma(u(t_0))V(t_0)=0$ on $\Gamma(t_0)$. This implies $u(t_0)=const$ in $\Omega$, hence $\cH(t_0)=-[\![\psi(u(t_0))]\!]/\sigma=const$. Since $\Omega$ is bounded, we may conclude that $\Gamma(t_0)$ is a union of finitely many, say $m$, disjoint spheres of equal radius,
i.e.\ $(u(t_0),\Gamma(t_0))$ is an equilibrium.
Therefore, the {\em limit sets} of solutions in the state manifold $\cSM_\gamma$,
see \eqref{phasemanif0}-\eqref{phasemanifg} for a definition, are contained in the
$(mn+1)$ dimensional manifold of equilibria
\begin{equation}
\label{equilibria-I}
\begin{split}
 \cE=\Big\{\big(u_\ast,\cup_{1\le l\le m}S_{R_\ast}(x_l)\big): &\,u_\ast>0,\
R_\ast=\sigma/[\![\psi(u_\ast)]\!],\ \bar B_{R_\ast}(x_l)\subset
\Omega,\\
&S_{R_\ast}(x_l)\cap S_{R_\ast}(x_k)=\emptyset,\,l\neq k \Big\},
\end{split}
\end{equation}
where $S_{R_\ast}(x_l)$ and $B_{R_\ast}(x_l)$ denote the sphere and the ball with radius $R_\ast$ and
center $x_l$, respectively.
\smallskip

c) Another interesting observation is the following. Consider the
critical points of the functional $\Phi(u,\Gamma)$ with constraint
${\sf E}(u,\Gamma)={\sf E}_0$, say on $C(\bar{\Omega})\times
\cMH^2(\Omega)$, see Section 3.1 for the definition of
$\cMH^2(\Omega)$. Then by the method of Lagrange multipliers, there
is $\mu\in\R$ such that at a critical point $(u_*,\Gamma_*)$ we have
\begin{equation}
\label{VarEq} 
\Phi^\prime(u_*,\Gamma_*)+\mu {\sf E}^\prime(u_*,\Gamma_*)=0.
\end{equation}
The derivatives of the functionals are given by
\begin{equation*}
\langle \Phi^\prime(u,\Gamma) |(v,h)\rangle =
(\eta^\prime(u)|v)_{L_2(\Omega)} - (
[\![\eta(u)]\!]|h)_{L_2(\Gamma)},
\end{equation*}
and
$$\langle {\sf E}^\prime(u,\Gamma) |(v,h)\rangle =
(\epsilon^\prime(u)|v)_{L_2(\Omega)} - ( [\![\epsilon(u)]\!]+\sigma
\cH(\Gamma)|h)_{L_2(\Gamma)}.$$ Setting first $h=0$ and varying $v$
in \eqref{VarEq} we obtain $ \eta^\prime(u_*) + \mu
\epsilon^\prime(u_*)=0$ in $\Omega$, and then varying $h$ we get
$$[\![\eta(u_*)]\!] +\mu\big([\![\epsilon(u_*)]\!]+\sigma \cH(\Gamma_*)\big)=0\text{ on $\Gamma_*$}.$$
The relations $\eta(u)=-\psi^\prime(u)$ and
$\epsilon(u)=\psi(u)-u\psi^\prime(u)$ imply
$0=-\psi^{\prime\prime}(u_*)(1+\mu u_*)$, and this shows that
$u_*=-1/\mu$ is constant in $\Omega$, since
$\kappa(u)=-u\psi^{\prime\prime}(u)>0$ for all $u>0$ by assumption.
This further implies $[\![\psi(u_*)]\!]+\sigma \cH(\Gamma_*)=0$,
i.e.\ the Gibbs-Thomson relation. Since $u_*$ is constant we see
that $\cH(\Gamma_*)$ is constant, hence $\Gamma_*$ is a sphere
whenever connected, and a union of finitely many disjoint spheres of
equal size otherwise. Thus the critical points of the entropy
functional for prescribed energy are precisely the equilibria of the
problem.

Going further, suppose we have an equilibrium $e_*:=(u_*,\Gamma_*)$
where the total entropy has a local maximum, w.r.t.\ the constraint
${\sf E}={\sf E}_0$ constant. Then $\cD:=[\Phi+\mu {\sf
E}]^{\prime\prime}(e_*)$ is negative semi-definite on the kernel of
${\sf E}^\prime(e_*)$, where $\mu$ is the fixed Lagrange multiplier
found above. The kernel of ${\sf E}^\prime(e)$ is given by the
identity
\begin{equation*}
\int_\Omega \kappa(u) v\, dx - \int_\Gamma\big([\![\epsilon(u)]\!] +
\sigma \cH(\Gamma)\big) h\, ds=0,
\end{equation*}
which at equilibrium yields
\begin{equation}
\label{kE}
 \int_\Omega \kappa_\ast v\, dx + \int_{\Gamma_\ast} l_\ast h\, ds=0,
\end{equation}
where $\kappa_\ast :=\kappa(u_\ast)$ and $l_\ast:=l(u_\ast)$.
On the other hand, a straightforward calculation yields with
$z=(v,h)$
\begin{equation}
\label{2var} 
-\langle \cD z|z\rangle
=\frac{1}{u_\ast}\left[\frac{1}{u_\ast}\int_\Omega\kappa_\ast
v^2\,dx - \sigma \int_{\Gamma_\ast}\! h\cdot\cH^\prime(\Gamma_*) h\,
ds\right].
\end{equation}
As  $\kappa_\ast$ and $\sigma$ are positive, we see that the form
$\langle \cD z|z\rangle$ is negative semi-definite  as soon as
$\cH^\prime ({\Gamma_*}) $ is negative semi-definite.
 We have
\begin{equation*}
\cH^\prime(\Gamma_\ast) = 1/{R^2_\ast}+(1/(n-1))\Delta_{\Gamma_\ast},
\end{equation*}
where $\Delta_{\Gamma_\ast}$ denotes the
Laplace-Beltrami operator on $\Gamma_\ast$ and $R_\ast$ means the radius of
the equilibrium sphere. To derive necessary conditions for an equilibrium $e_*$ to be a
local maximum of entropy, we consider two cases.
\smallskip\\
\noindent {1.} Suppose that $\Gamma_\ast$ is not connected, i.e.
$\Gamma_\ast$ is a union of $m$ disjoint spheres $\Gamma^k_\ast$.
Set $v=0$, and let $h=h_k\neq 0$ be constant on $\Gamma^k_\ast$ with $\sum_k
h_k=0$. Then the constraint \eqref{kE} holds, and
\begin{equation*}
\langle \cD z|z\rangle= (\sigma/u_\ast R^2_\ast)(|\Gamma_*|/m)
\,\sum_{k=1}^m h_k^2
>0,
\end{equation*}
hence $\cD$ cannot be negative semi-definite in this case. Thus if $e_\ast$
is an equilibrium with maximal total entropy, then $\Gamma_\ast$ must be
connected, and hence both phases are connected.
\smallskip\\
\noindent 2. Assume that $\Gamma_\ast$ is connected. Setting
$v=l_*/(\kappa_\ast|1)_\Omega$ and $h= -1/|\Gamma_\ast|$, we see
that the property that $\cD$ is negative semi-definite on the kernel of ${\sf E}^\prime(e_\ast)$ implies 
\begin{equation}
\label{var-sc} \zeta_\ast:=\frac{\sigma
u_\ast(\kappa_\ast|1)_\Omega}{l^2_\ast R^2_\ast |\Gamma_\ast|}\le 1.
\end{equation}
This is exactly the stability condition found in Theorem \ref{lin-stability}.

In summary, we obtain:
\begin{itemize}
\item
The equilibria of \eqref{stefan} are precisely the critical points
of the entropy functional with prescribed energy.
\vspace{1mm}
\item
The entropy functional with prescribed energy does not have a local maximum
$e_\ast=(u_\ast,\Gamma_\ast)$ in case $\Gamma_\ast$ is not connected.
\vspace{1mm}
\item
A necessary condition for a
critical point $e_\ast=(u_\ast,\Gamma_\ast)$ to be a local maximum of the
entropy functional with prescribed energy is that $\Gamma_\ast$ is connected and that
inequality \eqref{var-sc} holds.
\end{itemize}
It will be shown in Theorems \ref{lin-stability} and \ref{stability} below that
\begin{itemize}
\item
 $(u_*,\Gamma_*)\in\cE$ is stable if $\Gamma_*$ is connected and $\zeta_\ast<1$. 
\vspace{1mm}
\item
The latter is exactly the case if the reduced energy functional,
\begin{equation*}
[u\mapsto \varphi(u)={\sf E}(u, S_{R(u)}(x_0))],\quad R(u)=\sigma/[\![\psi(u)]\!],
\end{equation*}
has a strictly negative derivative at $u_\ast$.
\vspace{1mm}
\item
Any solution starting in a neighborhood of a stable equilibrium
exists globally and converges to another
stable equilibrium exponentially fast. 
\vspace{1mm}
\item
$(u_*,\Gamma_*)\in\cE$ is always unstable if $\Gamma_*$ is
disconnected, or if $\zeta_\ast>1$. \vspace{1mm}
\end{itemize}
Hence multiple spheres (of the same radius) are always unstable for \eqref{stefan}.
This situation is reminiscent of the onset of {\sl Ostwald ripening},
a process that manifests itself in the way that larger structures grow 
while smaller ones shrink and disappear.
Here we refer to  
\cite{AlFu03, AlFuKa03,AlFuKa04,AlFuKa04b,GORS09,HNO05},
\cite{Niet99}-\cite{NietVe04}, and the references therein
for various aspects and results on Ostwald ripening.
In particular, we mention that the authors in 
 \cite{AlFu03, AlFuKa03, AlFuKa04,AlFuKa04b} use the quasi-stationary Stefan problem
with surface tension (i.e., the Mullins-Sekerka problem) to model Ostwald ripening.
Under proper scaling assumptions, the way
sphere-like particles evolve is analyzed.
Interesting and illuminating connections between various versions 
of the Stefan problem (mostly the Mullins-Sekerka problem)
and Ostwald ripening are given in \cite{HNO05,Niet99,Niet00,Niet03,NietO01,Niet01b,NietVe04}.
It would be of considerable interest to also pursue the effect of coarsening in
the framework of the Stefan problem \eqref{stefan}.
\smallskip

d) 
Now we want to relate problem \eqref{stefan} to the pertinent Stefan problems that have been studied in the mathematical literature so far.  For this purpose we linearize
$h(u):=[\![\psi(u)]\!]$ near the {melting temperature} $u_m$, defined by $h(u_m)=0$. 
Then for the relative temperature $v=u-u_m$ we have
$h(u)\approx h^\prime(u_m) v $, hence with $l_m=l(u_m)$ and $\gamma_m=\gamma(u_m)$, the Gibbs-Thomson law approximately becomes
\begin{equation}
\label{approx-Stefan}
(l_m/u_m) v + \sigma \cH =\gamma_m V.
\end{equation}
This is the classical Gibbs-Thomson law (with kinetic undercooling in case $\gamma_m\!>\!0$).
Similarly, assuming that $u$ is close to $u_m$ and $V$ is small, the  Stefan law becomes approximately
\begin{equation}
\label{classic-Stefan}
[\![d\partial_\nu v]\!]=l_m V.
\end{equation}

As mentioned above, existence results for the Stefan problem with the
classical Gibbs-Thomson law $v=-\alpha \cH$ and the classical Stefan
law \eqref{classic-Stefan} in case $\kappa_1=\kappa_2$ can be found
in \cite{AW00, EPS03,FR91,HaGu10,Luc90,Me94,Ra91,Ra93, Rog04}. 
The Stefan problem with the linearized transmission conditions
\eqref{approx-Stefan}-\eqref{classic-Stefan} in case $\kappa_1=\kappa_2$ has been studied
in \cite{CR92,Ra91,Ra93,Vi85}, see also \cite{Kn07} for the
one-phase case.

In the recent publication 
\cite{Ha12} 
the author
also obtains nonlinear stability of single spheres for the Stefan problem 
with the linearized transmission conditions  \eqref{approx-Stefan}-\eqref{classic-Stefan} in 
case that all physical constants are taken to be 1, $\gamma_m=0$, and the (appropriately modified) 
stability condition $\zeta_\ast<1$ is satisfied.
The method relies on higher order energy estimates and requires higher order regularity
and compatibility conditions for the initial data, see also the remarks in e).

Our results go beyond the results in \cite{Ha12} in several significant ways:
we obtain existence and uniqueness results for arbitrary initial configurations,
with only minimal regularity assumptions on the data.
We also provide instability results, either in the case of connected equilibria 
with $\zeta_\ast>1$, or in the case of multiple spheres.
It should be mentioned that the linearized stability analysis 
of multiple spheres is considerably more involved
than the case of a single sphere.
Moreover, we allow for general material laws and we include kinetic undercooling.
Lastly, our setting allows for an interpretation of the equilibria in terms of the entropy
functional.

It should also be noted that the results and methods of our paper are not restricted to
the thermodynamically consistent Stefan problem, but also apply to the case
with linearized transmission conditions. In fact, the essential mathematcial difficulties
encountered in the stability-instability analysis of equilibria are already present in the latter 
situation.

Linear instability has been observed before in \cite{CJT95}
for the particular case that $\Omega=\R^2$, where suitable boundary conditions for the
temperature at infinity are imposed. It is then shown in \cite{CJT95} that equilibria are 
linearly unstable. This setting formally implies $\zeta_\ast=\infty$, which is in agreement
with the instability condition $\zeta_\ast>1$ of this paper.

If $\kappa_1=\kappa_2=0$ then we obtain a thermodynamically
consistent quasi-stationary approximation of the Stefan problem with
surface tension (and kinetic undercooling). Existence and
global existence of classical solutions for the quasi-stationary
approximation with \eqref{approx-Stefan} with $\gamma_m\neq 0$ and the classical Stefan
law \eqref{classic-Stefan} has been investigated in \cite{Wu96,Kn07}.


As mentioned before, we assume that the behavior in the bulk phases is described
by constitutive equations for the free energies $\psi_i(u)$.
A common assumption is that the heat capacities be constant 
and equal in the respective phases. Then we necessarily have
$0\equiv [\![\kappa]\!]= [\![\epsilon^\prime(u)]\!]=-u[\![\psi^{\prime\prime}(u)]\!],$
which implies that the function $h(u)=[\![\psi(u)]\!]$ is linear, i.e. $h(u)=h_0+h_1 u$, and then $l(u)= h_1u$.  The melting temperature is here given by $0<u_m=-h_0/h_1$.

If the heat capacities $\kappa_i$ are constant in the phases but not necessarily equal, the internal energies depend linearly on the temperature and the free and the inner energies are of the form
\begin{equation*}
\psi_i(u)= a_i+b_iu-\kappa_i u \ln u,
\quad \epsilon_i(u)=a_i+\kappa_iu,
\end{equation*}
hence $h(u) = \alpha +\beta u -\delta u\ln u$,
with constants $\alpha,\beta,\delta\in\R$. 
Concerning existence of equilibria, these special cases will be discussed in more detail in Section 4.

In \cite{Luc91} the author considers a Stefan problem based
on thermodynamical principles for the case where the internal energy
is given by 
$$e=w+\varphi\quad\text{with} \quad w=(1/u_m-1/u),$$ 
where the phase function $\varphi:\Omega\setminus\Gamma\to \{0,1\}$ 
assumes the distinct values $0$ and $1$ in the respective bulk phases,
and where $u$ and $u_m$ denote the absolute and the melting temperature.
In this situation, the free energy of the system can be described by
\begin{equation*}
\psi_i(u)={1}/{u_m}-{1}/{(2u)}+\varphi(1-{u}/{u_m}).
\end{equation*}
Setting $d_i(u)=1/u^2$,
the diffusion equation in the bulk phases, expressed for the new variable $w$, becomes
$\partial_t e=\Delta w$.
The total energy and total entropy are then given by
\begin{equation*}
\begin{split}
{\sf E}(w,\Gamma)=\int_{\Omega\setminus\Gamma} (w+\varphi)\,dx +\frac{\sigma|\Gamma|}{n-1}, 
\ {\Phi}(w,\Gamma)=\int_{\Omega\setminus\Gamma}\!\!\left\{-\frac{w^2}{2}
+\frac{w+\varphi}{u_m}-\frac{1}{2u_m^2}\right\}dx,
\end{split}
\end{equation*}
where $|\Gamma|$ denotes the surface area of $\Gamma$.
Therefore, the function
\begin{equation*}
{L}(w,\Gamma):=\int_{\Omega\setminus\Gamma} \frac{w^2}{2}\,dx+\frac{\sigma |\Gamma |}{(n-1)u_m}
=-{\Phi}(w,\Gamma) + \frac{{\sf E}(w,\Gamma)}{u_m}
-\frac{|\Omega|}{2u^2_m},
\end{equation*}
termed total entropy in \cite{Luc91}, is a Lyapunov functional for the system.

On a more elementary and ad-hoc level, assuming equal and constant heat capacities 
$\kappa=\kappa_i$, constant latent heat $l_m$, and constant heat conductivity coefficients
$d_i$, one can assign to the Stefan problem subject to the classical 
conditions \eqref{approx-Stefan}--\eqref{classic-Stefan} 
an energy and a Lyapunov functional through the relations
\begin{equation*}
{\sl E}(v,\Gamma)=\int_{\Omega\setminus\Gamma} (\kappa v +l_m\varphi)\, dx,
\quad
L(v,\Gamma)=\int_{\Omega\setminus\Gamma} \kappa v^2/2\,dx + \sigma u_m |\Gamma|/(n-1),
\end{equation*}
where $v=u-u_m$ denotes the relative temperature, and where the function 
$\varphi$ has the same meaning as above, see also 
\cite{PrSi08} for the case  $\kappa_1\neq \kappa_2$.
The Lyapunov functional $L$ plays an important role in the construction of long time weak solutions
in \cite{Luc90,Luc91}, see also \cite{Rog04}. 
The authors in \cite{Luc90,Luc91,Rog04} 
consider the Stefan problem subject to the linearized transmission conditions 
\eqref{approx-Stefan}--\eqref{classic-Stefan} with $\gamma_m=0$, and they assume
equal and constant heat capacities $\kappa_i$, constant latent heat, and constant
heat conduction coefficients. 
The weak solutions obtained  exist on any given, fixed time interval
$(0,T)$ and have the feature that they lead to a sharp interface $\Gamma(t)$,
in contrast to the weak solutions previously obtained in \cite{Vi83,Vi89}.
A serious drawback of the results in
\cite{Luc90,Luc91,Rog04,Vi83,Vi89} is caused by the lack of uniqueness of solutions. 
This renders further assertions concerning asymptotic properties of solutions
rather difficult, if not impossible.
\smallskip

e) The novelty of our contribution lies in the fact that
we consider rather general phase transition models that are thermodynamically consistent.
In particular, we allow for different heat capacities, and kinetic undercooling
can be included in the model.
In the mathematical literature it is commonly assumed that the heat capacities $\kappa_i$
be equal. However, this assumption is somewhat questionable, as it implies that the internal energies $\eps_i$
can only differ by a constant.

We obtain unique strong solutions, but existence is only guaranteed for short time intervals.
This, however, is to be expected, as  solutions can develop singularities in finite time,
say in the way that topological changes in the geometry may occur.
We give a complete analysis for the equilibrium states of \eqref{stefan},
and we investigate the asymptotic behavior of solutions that
start out close to equilibria.
It is of significant interest to note that the equilibrium states can be characterized
as the critical points of the total entropy subject to the constraint that the total energy be
conserved. Moreover, we obtain that the equilibrium case where the dispersed phase consists of multiple
balls (necessarily of the same radius) always leads to an unstable configuration.
As already mentioned, this is reminiscent of the onset of Ostwald ripening.
Additionally, we prove that solutions exist globally and have relatively compact orbits, 
provided they do not exhibit singularities, see Theorem~\ref{Qual}.
It appears that this manuscript is the first work to provide
such qualitative results for a thermodynamically consistent Stefan problem.

A major difficulty in the mathematical treatment of
the Stefan problem \eqref{stefan} is due to the fact that the boundary
$\Gamma(t)$, and thus the geometry, is unknown and ever changing.
A widely used method to overcome this inherent difficulty
is to choose a fixed reference surface $\Sigma$ and then represent
the moving surface $\Gamma(t)$ as the graph of a function
in normal direction of $\Sigma$.
This way, one obtains a time-dependent (unknown) diffeomorphism
from $\Sigma$ onto $\Gamma(t)$, and in a next step
 this diffeomorphism is extended to a diffeomorphism of fixed
reference regions $\Omega_i^\Sigma $ onto the unknown domains $\Omega_{i}(t)$.
The treatment of the free boundary problem \eqref{stefan}
then proceeds by transforming the equations into a new
system of equations defined on the fixed domain
$\Omega\setminus\Sigma$ from which both the solution
and the parameterizing function have to be determined.
In the context of the Stefan problem this approach has first been
used by Hanzawa~\cite{Han81}.
This step in carried out in Section~2.

Section~3 is devoted to results on local
well-posedness for problem \eqref{stefan}, based on the approach in
\cite{EPS03} and \cite{DPZ08}. 
We show that solutions do  not lose regularity. Thus, solutions give rise to a semiflow in the state manifold $\cSM_\gamma$, 
and this property allows us to use methods from the theory of dynamical systems
to further investigate geometric properties of solutions, such as the structure
of the $\omega$-limit set, and convergence results for global solutions, see 
for instance Theorem~\ref{Qual}.

In Section 4
we discuss  equilibria and their linear stability properties. Here
we rely on previous work in \cite{PrSi08}.
However, we should like to point out that the stability results 
given here are considerably more general than those in \cite{PrSi08},
where we only considered  the situation of a connected disperse phase.

In Section~5
we establish the corresponding stability properties for the
nonlinear problem, employing the {\em generalized principle of
linearized stability}, extending the results of \cite{PSZ09} to the
situation considered here. The main result of this section shows
convergence of solutions to an equilibrium which start out near
stable equilibria. 
Moreover, we give a rigorous proof of the instability result.
The main difficulty in proving the stability result lies in the fact
that equilibria are not isolated, but rather form a manifold,
caused by the fact that the equilibrium problem is invariant under
translations and scaling.  
This implies that the standard approach of linearized stability cannot be
applied directly, and for this reasons we need to apply ideas developed
in \cite{PSZ09}.
We emphasize, however, that the situation considered here 
is much more complicated than in \cite{PSZ09},
as the compatibility conditions implied by line 4 of equation (1.3)  
force us to work in a nonlinear manifold
(the state manifold $\cSM_\gamma$),
rather than in an open subset of a vector space.

In order to prove the stability/instabilty results, we parametrize
$\cSM_\gamma$ locally over the tangent space $\tilde Z_\gamma$
in a neighborhood of $(0,0)$.
The flow is then decomposed into a 
part $\tilde z$ evolving in the tangent space $\tilde Z_\gamma$,
and a small component $\bar z$ that corresponds to the image
of $\tilde z$ under the parametrization of $\cSM_\gamma$.
The flow of $\tilde z$ is driven by the linearized operator $L_\gamma$
and the right hand side of \eqref{nonlin-param-red}, which couples the dynamics of 
$\tilde z$ and $\bar z$. 
The part of the flow in the tangent space $\tilde Z_\gamma$
is then further decomposed into a part ${\sf x}$, corresponding to the finite dimensional
projection of $\tilde z$ onto the kernel $N(L_\gamma)$ of $L_\gamma$,
and a part ${\sf y}$ in the stable subspace, which basically measures the deviation
of $\tilde z$ from the manifold of equilibria ${\cE}$ in the stable direction.
It is important to note that
$N(L_\gamma)$ coincides with the tangent space of the manifold $\cE$ of equilibria at
a fixed equilibrium $e_\ast$.
All the equations are coupled, and a contraction mapping argument is employed
to obtain the existence of global in time, exponentially decreasing solutions. 

The stability result in \cite{Ha12} has been obtained using the method of higher energy estimates
as well as suitable orthogonality conditions to deal with the nontrivial kernel 
$N(L_\gamma)$ of the linearization. A related idea of flow decomposition
is used:  the flow is decomposed into a finite-dimensional component and a remaining
infinite dimensional component which, in a suitable sense, is transversal to 
the equilibrium manifold $\cE$.
Related ideas have also been used in the study of long-time asymptotics
for the Mullins-Sekerka model \cite{AlFu03, ES98},  
or in the long-time analysis of some curvature driven flows \cite{Str02},
and of solitons, see for instance the review in \cite{Tao09}.

\smallskip

Of ultimate importance is the Lyapunov functional
for (\ref{stefan}), which is given by the negative total entropy
$-\Phi(u,\Gamma)$. It takes bounded global-in-time solutions to the
set of equilibria, and then by the results of Section 5 and
relative compactness of the orbits, any such solution must converge towards an
equilibrium in the topology of the state manifold $\cSM_\gamma$
provided it comes close to a stable equilibrium.

Our analysis is carried out in the framework of
$L_p$-spaces, with $n+2<p<\infty$. We expect that it would be enough
to require $(n+2)/2<p<\infty$  (so unfortunately $p>2$ even in
$2D$!), but for the sake of simplicity we restrict ourselves here to
the stronger assumption $p>n+2$. We also expect that a similar
analysis can be obtained in the framework of the little H\"older
spaces $h^\alpha$, which would, though, require higher order compatibility
conditions.
\smallskip

f) 
Finally, we we would like to address open problems and directions of future research.
We are confident that our approach based on maximal regularity is flexible and general enough
to also investigate more complex models that take {\em surface energy} into consideration.
In fact, the case where the surface tension depends on the temperature
has recently been considered in \cite{PSW11}.
Of considerable interest is also
the case where the Gibbs-Thomson law is replaced by 
$$ [\![\psi(u)]\!]+\sigma \cH =\gamma(u) V- {\rm div}_\Gamma[\alpha(u)\nabla_\Gamma(V/u)]-u\,{\rm div}_\Gamma[\beta(u)\nabla_\Gamma V],$$
with $\alpha,\beta>0$, see \cite{Gur07} for more background information.
The modified Stefan law then reads
\[
[\![d(u)\partial_\nu u]\!] =\Big(l(u)-\gamma(u)V+{\rm
div}_\Gamma[\alpha(u)\nabla_\Gamma(V/u)]+u\,{\rm
div}_\Gamma[\beta(u)\nabla_\Gamma V]\Big) V
\]
and the resulting entropy production becomes
\begin{align*}
 \frac{d}{dt}\Phi(u(t),\Gamma(t)) &=
 \int_\Omega \frac{1}{u^2}d(u)|\nabla u|^2\, dx
 + \int_\Gamma \frac{1}{u}\gamma(u) V^2\,ds\\
 &+ \int_\Gamma [\alpha(u) |\nabla_\Gamma (V/u)|^2+\beta(u)|\nabla_\Gamma V|^2]|\,ds\geq0.
\end{align*}
 
Another direction concerns the situation where the densities of the respective phases
are different. This case results in the occurrence of so-called {\em Stefan currents},
and the corresponding models need to also involve the equations of fluid dynamics,
see for instance \cite{Gur07}.
Additional very interesting problems concern phase transitions in moving viscous fluids,
in which case the motion can be modeled by a thermodynamically consistent Stefan problem
coupled to the Navier-Stokes equations, 
see \cite{Gur07,Ish06}.
First results in this direction are contained in \cite{PSSS11,PS11}.
Of even greater challenge is the case where one fluid is {\em evaporating},
leading to a phase-transition model which couples the equations of phase transitions
to the equations of fluid dynamics with a compressible fluid phase.

As mentioned above, it would be interesting to link 
the thermodynamically consistent Stefan problem \eqref{stefan}
to the occurrence of {\em Ostwald ripening}.

Of considerable interest is also a better understanding of
the occurrence of {\em singularities} for solutions of \eqref{stefan}.
A preliminary result ensuring global existence is contained in Theorem~\ref{Qual}
under rather restrictive assumptions.
We conjecture
that the only obstruction against global existence is related to the break down of the geometry:
if no topological changes take place and the curvatures stay bounded, then the solution exists globally.
More specifically,  we conjecture that assumptions (i), (ii) and (iii) in Theorem~\ref{Qual} 
follow from (iv), provided that at time $t=0$ we have $u_0>0$, and
$l(u_0)\neq0$ in $\bar{\Omega}$ in case $\gamma\equiv0$.

An additional direction that is of great relevance concerns  {\em triple junctions},
for instance the case when the free surface $\Gamma(t)$ is in contact with the solid container wall.
While the situation where $\Gamma(t)$ meets the container wall orthogonally can
likely be handled with the methods developed in this paper (by reflection arguments),
the case of arbitrary {\em contact angles} remains a significant challenge.
For progress in the case of the Hele-Shaw problem with surface tension we refer to
\cite{KM12a,KM12b}.
\section{Transformation to a Fixed Interface}

Let $\Omega\subset\R^n$ be a bounded domain with boundary $\partial
\Omega$ of class $C^2$, and suppose $\Gamma\subset \Omega$ is a
closed hypersurface of class $C^2$, i.e.\ a $C^2$-manifold which is
the boundary of a bounded domain $\Omega_1\subset \Omega$. We then
set $\Omega_2=\Omega\setminus\bar{\Omega}_1$. Note that while
$\Omega_2$ is connected, $\Omega_1$ may be  disconnected. However,
$\Omega_1$ consists of finitely many components only, as $\partial
\Omega_1=\Gamma$ by assumption is a manifold, at least  of class
$C^2$. Recall that the {\em second order bundle} of $\Gamma$ is
given by
$$\cN^2\Gamma:=\{(p,\nu_\Gamma(p),L_\Gamma(p)):\, p\in\Gamma\}.$$
Note that the Weingarten map $L_\Gamma$ (also called the shape operator,
or the second fundamental tensor) is defined by
$$ L_\Gamma(p) = -\nabla_\Gamma \nu_\Gamma(p),\quad p\in\Gamma ,$$
where $\nabla_\Gamma$ denotes the surface gradient on $\Gamma$.
The eigenvalues $\kappa_j(p)$ of $L_\Gamma(p)$ are the principal curvatures of $\Gamma$ at $p\in\Gamma$, and we have
$|L_\Gamma(p)|=\max_j|\kappa_j(p)|.$
The mean curvature $\cH_\Gamma(p)$ is given by
$$(n-1)\cH_\Gamma(p) = \sum_{j=1}^{n-1} \kappa_j(p)={\rm tr} L_\Gamma(p) = -{\rm div}_\Gamma \nu_\Gamma(p),$$
where ${\rm div}_\Gamma$ means surface divergence. Recall also that the
{\em Hausdorff distance} $d_H$ between the two closed subsets $A,B\subset\R^m$
is defined by
$$d_H(A,B):= \max\big\{\sup_{a\in A}{\rm dist}(a,B),\sup_{b\in B}{\rm dist}(b,A)\big\}.$$
Then we may approximate $\Gamma$ by a real analytic hypersurface $\Sigma$ (or merely $\Sigma\in C^3$),
in the sense that the Hausdorff distance of the second order bundles of
$\Gamma$ and $\Sigma$ is as small as we want. More precisely, for each $\eta>0$ there is a real analytic closed
hypersurface such that
$d_H(\cN^2\Sigma,\cN^2\Gamma)\leq\eta$. If $\eta>0$ is small enough, then $\Sigma$
bounds a domain $\Omega_1^\Sigma$ with  $\overline{\Omega^\Sigma_1}\subset\Omega$,
and we set $\Omega^\Sigma_2=\Omega\setminus\bar{\Omega}^\Sigma_1$.

It is well known that such a hypersurface $\Sigma$ admits a tubular neighborhood,
which means that there is $a>0$ such that the map
\begin{eqnarray*}
&&\Lambda:\, \Sigma \times (-a,a)\to \R^n \\
&&\Lambda(p,r):= p+r\nu_\Sigma(p)
\end{eqnarray*}
is a diffeomorphism from $\Sigma \times (-a,a)$
onto $\cR(\Lambda)$. The inverse
$$\Lambda^{-1}:\cR(\Lambda)\mapsto \Sigma\times (-a,a)$$ of this map
is conveniently decomposed as
$$\Lambda^{-1}(x)=(\Pi(x),d_\Sigma(x)),\quad x\in\cR(\Lambda).$$
Here $\Pi(x)$ means the nonlinear orthogonal projection of $x$ to $\Sigma$ and $d_\Sigma(x)$ the signed
distance from $x$ to $\Sigma$; so $|d_\Sigma(x)|={\rm dist}(x,\Sigma)$ and $d_\Sigma(x)<0$ iff
$x\in \Omega_1^\Sigma$. In particular we have $\cR(\Lambda)=\{x\in \R^n:\, {\rm dist}(x,\Sigma)<a\}$.

On the one hand, $a$ is determined by the curvatures of $\Sigma$, i.e.\ we must have
$$0<a<\min\big\{1/|\kappa_j(p)|: j=1,\ldots,n-1,\; p\in\Sigma\big\},$$
where $\kappa_j(p)$ mean the principal curvatures of $\Sigma$ at $p\in\Sigma$.
But on the other hand, $a$ is also connected to the topology of $\Sigma$,
which can be expressed as follows. Since $\Sigma$ is a compact (smooth) manifold of
dimension $n-1$ it satisfies a (interior and exterior) ball condition, which means that
there is a radius $r_\Sigma>0$ such that for each point $p\in \Sigma$
there are $x_j\in \Omega_j^\Sigma$, $j=1,2$, such that $B_{r_\Sigma}(x_j)\subset \Omega_j^\Sigma$, and
$\bar{B}_{r_\Sigma}(x_j)\cap\Sigma=\{p\}$. Choosing $r_\Sigma$ maximal,
we then must also have $a<r_\Sigma$.
In the sequel we fix
$$ a= \frac{1}{2}\min\left\{r_\Sigma, \frac{1}{|\kappa_j(p)|}, \, j=1,\ldots, n-1,\; p\in\Sigma\right\}.$$

For later use we note that the derivatives of $\Pi(x)$ and $d_\Sigma(x)$ are given by
$$
\nabla d_\Sigma(x)= \nu_\Sigma(\Pi(x)),\quad \Pi^\prime(x) = M_0(d_\Sigma(x),\Pi(x))P_\Sigma(\Pi(x)),
$$
where $P_\Sigma(p)=I-\nu_\Sigma(p)\otimes\nu_\Sigma(p)$ denotes the orthogonal projection onto
the tangent space $T_p\Sigma$ of $\Sigma$ at $p\in\Sigma$, and
$M_0(r,p)=(I-r L_\Sigma(p))^{-1}$.
Note that $|M_0(r,p)|\leq 1/(1-r|L_\Sigma (p)|)\leq 2$ for all $|r|\leq a$ and $p\in\Sigma$.

Setting $\Gamma=\Gamma(t)$, we may use the map $\Lambda$ to parametrize the unknown free
boundary $\Gamma(t)$ over $\Sigma$ by means of a height function $\rho(t,p)$ via
$$\Gamma(t): [p\mapsto p+\rho(t,p)\nu_\Sigma(p)],\quad p\in\Sigma,\; t\geq0,$$
for small $t\geq0$, at least.
Extend this diffeomorphism to all of $\bar{\Omega}$ by means of
$$ \Xi_\rho(t,x) = x +\chi(d_\Sigma(x)/a)\rho(t,\Pi(x))\nu_\Sigma(\Pi(x))=:x+\theta_\rho(t,x).$$
Here $\chi$ denotes a suitable cut-off function; more precisely, $\chi\in\cD(\R)$,
$0\leq\chi\leq 1$, $\chi(r)=1$ for $|r|<1/3$, and $\chi(r)=0$ for $|r|>2/3$.
Note that $\Xi_\rho(t,x)=x$ for $|d_\Sigma(x)|>2a/3$, and
$$\Xi_\rho^{-1}(t,x)= x-\rho(t,\Pi(x))\nu_\Sigma(\Pi(x))\quad \mbox{ for }\; |d_\Sigma(x)|<a/3.$$
In particular,
$$\Xi_\rho^{-1}(t,x)= x-\rho(t,x)\nu_\Sigma(x)\quad \mbox{ for }\; x\in\Sigma.$$
 Setting $v(t,x)=u(t,\Xi_\rho(t,x))$, or
$u(t,x)= v(t,\Xi_\rho^{-1}(t,x))$ we have this way transformed the
time varying regions $\Omega\setminus \Gamma(t)$ to the fixed domain
$\Omega\setminus\Sigma$. This is the direct mapping method, also
called Hanzawa transformation.

By means of this transformation, we obtain the following transformed
problem.
\begin{equation}
\label{transformed}
\left\{\begin{aligned}
\kappa(v)\partial_t v +\cA(v,\rho)v&=\kappa(v)\cR(\rho)v
         &&\text{in}&&\Omega\setminus\Sigma\\
\partial_{\nu_\Omega} v&=0 &&\text{on}&&\partial \Omega\\
 [\![v]\!]&=0 &&\text{on}&&\Sigma \\
[\![\psi(v) ]\!] + \sigma \cH(\rho)&= \gamma(v)\beta(\rho)\partial_t\rho
  &&\text{on}&&\Sigma\\
\{l(v)-\gamma(v)\beta(\rho)\partial_t\rho\}\beta(\rho)\partial_t\rho +\cB(v,\rho)v &= 0
 &&\text{on}&&\Sigma\\
v(0)=v_0,\ \rho(0)&=\rho_0.&&
\end{aligned}\right.
\end{equation}
Here $\cA(v,\rho)$ and $\cB(v,\rho)$ denote the transformations of $-{\rm div}(d\nabla )$ and
$-[\![d\partial_\nu]\!]$, respectively. Moreover, $\cH(\rho)$ means the mean curvature of $\Gamma$,
$\beta(\rho)=(\nu_\Sigma|\nu_\Gamma(\rho))$, the term $\beta(\rho)\partial_t\rho$ represents the normal velocity $V$,
and
$$\cR(\rho)v=\partial_t v -\partial_tu\circ \Xi_\rho.$$
\\
The system (\ref{transformed}) is a quasi-linear parabolic problem
on the domain $\Omega$  with fixed interface $\Sigma\subset \Omega$
with a {\em dynamic boundary condition}, namely the fifth equation
which describes the evolution of the interface $\Gamma(t)$.

To elaborate on the structure of this problem in more detail, we calculate
$$\Xi_\rho^\prime = I + \theta_\rho^\prime, \quad \quad \Xi_\rho^{\prime-1} = I - {[I + \theta_\rho^\prime]}^{-1}\theta_\rho^\prime
=:I-M_1(\rho)^{\sf T}$$
and
\begin{align*}
\nabla u\circ\Xi_\rho
=  [{(\Xi_\rho^{-1})}^{\prime{\sf T}}\circ\Xi_\rho]\nabla v
= (I - M_1(\rho))\nabla v,
\end{align*}
and for a vector field $q= \bar{q}\circ \Xi_\rho$
\begin{align*}
(\nabla|\bar{q})\circ\Xi_\rho
=  ([{(\Xi_\rho^{-1})}^{\prime{\sf T}}\circ\Xi_\rho]\nabla|q)
=  ((I - M_1(\rho))\nabla|q).
\end{align*}
Further we have
\begin{align*}
\partial_t u\circ\Xi_\rho
&=  \partial_t v -(\nabla u\circ \Xi_\rho|\partial_t\Xi_\rho)
 =  \partial_t v -( [{(\Xi_\rho^{-1})}^{\prime{\sf T}}\circ\Xi_\rho]\nabla v|\partial_t\Xi_\rho) \\
&=  \partial_t v -(\nabla  v|{[I +
\theta_\rho^\prime]}^{-1}\partial_t\theta_\rho ),
\end{align*}
hence
$$\cR(\rho)v=(\nabla  v|{[I + \theta_\rho^\prime]}^{-1}\partial_t\theta_\rho ).$$
With the Weingarten map $L_\Sigma=-\nabla_\Sigma\nu_\Sigma$  we have
\begin{equation*}
\begin{aligned}
\nu_\Gamma(\rho)&= \beta(\rho)(\nu_\Sigma-\alpha(\rho)),&& \alpha(\rho)= M_0(\rho)\nabla_\Sigma \rho,\\
M_0(\rho)&=(I-\rho L_\Sigma)^{-1},&& \beta(\rho) = (1+|\alpha(\rho)|^2)^{-1/2},
\end{aligned}
\end{equation*}
and
$$V=(\partial_t\Xi|\nu_\Gamma) = (\nu_\Sigma|\nu_\Gamma(\rho))\partial_t \rho
=\beta(\rho)\partial_t \rho.$$
Employing this notation leads to $\theta_\rho^\prime=0$ for $|d_\Sigma(x)|>2a/3$ and
\begin{align*}
\theta_\rho^\prime(t,x)&= \frac{1}{a}\chi'(d_\Sigma(x)/a)\rho(t,\Pi(x))\nu_\Sigma(\Pi(x))\otimes\nu_\Sigma(\Pi(x))\\
& + \chi(d_\Sigma(x)/a)[\nu_\Sigma(\Pi(x))\otimes M_0(d_\Sigma(x))\nabla_\Sigma \rho(t,\Pi(x))]\\
&-\chi(d_\Sigma(x)/a)\rho(t,\Pi(x))L_\Sigma(\Pi(x))M_0(d_\Sigma(x))P_\Sigma(\Pi(x))
\end{align*}
for $0\leq|d_\Sigma(x)|\leq 2a/3$.
In particular, for $x\in \Sigma$ we have
$$\theta_\rho^\prime(t,x)=\nu_\Sigma(x)\otimes\nabla_\Sigma\rho(t,x) -\rho(t,x)L_\Sigma(x)P_\Sigma(x), $$
and
$$(\theta_\rho^\prime)^{\sf T}(t,x)=\nabla_\Sigma\rho(t,x)\otimes\nu_\Sigma(x) -\rho(t,x)L_\Sigma(x), $$
since $L_\Sigma(x)$ is symmetric and has range in $T_x\Sigma$.
Therefore, $[I + \theta_\rho^\prime]$ is boundedly invertible, if $\rho$ and $\nabla_\Sigma \rho$
are sufficiently small, and
\begin{equation*}
\label{hanzawainv}
|{[I + \theta_\rho^\prime]}^{-1}| \leq 2 \quad \mbox{ for }\;
{|\rho|}_\infty \leq \frac{1}{4 ({|\chi'|}_\infty/a+ 2\max_j |\kappa_j|)},
\quad {|\nabla_\Sigma \rho|}_\infty \leq \frac{1}{8}.
\end{equation*}
For the mean curvature $\cH(\rho)$ we have
$$ (n-1)\cH(\rho) = \beta(\rho)\{ {\rm tr} [M_0(\rho)(L_\Sigma+\nabla_\Sigma \alpha(\rho))]
-\beta^2(\rho)(M_0(\rho)\alpha(\rho)|[\nabla_\Sigma\alpha(\rho)]\alpha(\rho))\},$$
an expression involving second order derivatives of $\rho$ only linearly.
Its linearization at $\rho=0$ is given by
$$(n-1)\cH^\prime(0)= {\rm tr}\, L_\Sigma^2 +\Delta_\Sigma.$$
Here $\Delta_\Sigma$ denotes the Laplace-Beltrami operator on $\Sigma$.
The operator $\cB(v,\rho)$ becomes
\begin{align*}
\cB(v,\rho)v&= -[\![d(u)\partial_\nu u]\!]\circ\Xi_\rho=-([\![d(v)(I-M_1(\rho))\nabla v]\!]|\nu_\Gamma)\\
&= -\beta(\rho)([\![d(v)(I-M_1(\rho))\nabla v]\!]|\nu_\Sigma -\alpha(\rho))\\
&= -\beta(\rho)[\![d(v)\partial_{\nu_\Sigma} v]\!]
+\beta(\rho)([\![d(v)\nabla v]\!]|(I-M_1(\rho))^{\sf
T}\alpha(\rho)),
\end{align*}
since $M_1^{\sf T}(\rho)\nu_\Sigma = 0$, and finally
\begin{align*}
\cA(v,\rho)v= & -{\rm div}( d(u)\nabla u)\circ\Xi_\rho= -((I-M_1(\rho))\nabla|d(v)(I-M_1(\rho))\nabla v)\\
= & -d(v)\Delta v + d(v)[M_1(\rho)+M_1^{\sf T}(\rho)-M_1(\rho)M_1^{\sf T}(\rho)]:\nabla^2 v\\
&-d^\prime(v)|(I-M_1(\rho))\nabla v|^2
 + d(v)((I-M_1(\rho)):\nabla M_1(\rho)|\nabla v).
\end{align*}
We recall that for matrices  $A,B\in\R^{n\times n}$, $
A:B=\sum_{i,j=1}^n a_{ij}b_{ij}=\text{tr}\,(AB^{\sf T}) $ denotes
the inner product.

Obviously, the leading part of $\cA(v,\rho)v$ is $-d(v)\Delta v$,
while the leading part of $\cB(v,\rho)v$ is
$-\beta(\rho)[\![d(v)\partial_{\nu_\Sigma} v]\!]$, as $M_1(0)=0$ and
$\alpha(0)=0$; recall that we may assume $\rho$ small in the
$C^2$-norm. It is important to recognize the quasilinear structure
of \eqref{transformed}: derivatives of highest order only appear
linearly in each of the equations.
\section{Local Well-Posedness}
The basic result for local well-posedness in the absence of kinetic undercooling
in an $L_p$-setting is the following.
\begin{theorem} 
\label{Thm3.1}
{\rm ($\gamma \equiv 0$).}
\label{wellposed1} Let $p>n+2$, $\gamma=0$, $\sigma>0$. Suppose
$\psi\in C^3(0,\infty)$, $d\in C^2(0,\infty)$ such that
$$\kappa(u)=-u\psi^{\prime\prime}(u)>0,\quad d(u)>0,\quad u\in(0,\infty).$$
Assume the {\em regularity conditions}
$$u_0\in W^{2-2/p}_p(\Omega\setminus\Gamma_0)\cap C(\bar{\Omega}),\quad u_0>0,
\quad \Gamma_0\in W^{4-3/p}_p,$$
 the {\em  compatibility conditions}
$$
\partial_{\nu_\Omega}u_0=0,\quad
[\![\psi(u_0)]\!]+\sigma \cH(\Gamma_0)=0,
\quad [\![d(u_0)\partial_{\nu_{\Gamma_0}} u_0]\!]\in W^{2-6/p}_p(\Gamma_0),$$
and the {\em well-posedness condition}
$$\quad l(u_0)\neq0\quad\mbox{on}\; \Gamma_0.$$
Then there exists a unique $L_p$-solution for the Stefan problem with surface tension \eqref{stefan}
on some possibly small but nontrivial time interval $J=[0,\tau]$.
\end{theorem}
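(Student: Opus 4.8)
The plan is to prove \thmref{wellposed1} by reformulating the transformed problem \eqref{transformed} as an abstract quasilinear parabolic evolution equation with a dynamic boundary condition, and then to apply maximal $L_p$-regularity theory together with the contraction mapping principle. First I would fix the reference hypersurface $\Sigma$ (real analytic, or $C^3$, sufficiently close to $\Gamma_0$ in the sense of the second order bundle as described in Section~2), so that the height function $\rho_0$ representing $\Gamma_0$ over $\Sigma$ is small in $C^2$; the regularity $\Gamma_0\in W^{4-3/p}_p$ translates into $\rho_0\in W^{4-3/p}_p(\Sigma)$, and $u_0\in W^{2-2/p}_p(\Omega\setminus\Gamma_0)\cap C(\bar\Omega)$ transforms to $v_0\in W^{2-2/p}_p(\Omega\setminus\Sigma)\cap C(\bar\Omega)$ with $v_0>0$. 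The idea is to view $(v,\rho)$ as the unknown and to set up the linearization at $(v_0,\rho_0)$ (or at $(v_0,0)$, using smallness of $\rho_0$).

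The key step is to establish maximal $L_p$-regularity for the principal linear part. After the Hanzawa transform, the bulk equation has leading part $\kappa(v)\partial_t v - d(v)\Delta v$, a standard non-degenerate parabolic operator (since $\kappa,d>0$), with the Neumann condition on $\partial\Omega$ and the transmission condition $[\![v]\!]=0$ on $\Sigma$. The two conditions on $\Sigma$ form a coupled system: the Gibbs-Thomson equation (with $\gamma\equiv 0$) reads $[\![\psi(v)]\!]+\sigma\cH(\rho)=0$, whose linearization at $\rho=0$ involves $\sigma\cH'(0)=\sigma(\mathrm{tr}\,L_\Sigma^2+\Delta_\Sigma)/(n-1)$, a second-order elliptic operator in the tangential variables acting on $\rho$; and the transformed Stefan law, whose leading part is $l(v_0)\beta(\rho)\partial_t\rho - \beta(\rho)[\![d(v)\partial_{\nu_\Sigma}v]\!]=0$. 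Solving the Gibbs-Thomson relation for $[\![\psi(v)]\!]$ and using that $[\![\psi]\!]$ has nonvanishing derivative near $u_0$ (this is where the well-posedness condition $l(u_0)\neq 0$ on $\Gamma_0$ enters, since $l(u)=u[\![\psi'(u)]\!]$), one can express the Dirichlet trace of $v$ on $\Sigma$ in terms of $\rho$, reducing to a third-order-in-space, first-order-in-time dynamic boundary condition for $\rho$ coupled to the Neumann-type trace of $v$. I would cite the abstract results on parabolic problems with dynamic boundary conditions from \cite{DPZ08} (and the Stefan-type setup in \cite{EPS03}) to conclude that this linearized problem has a unique solution in the maximal regularity class
\begin{equation*}
\mathbb{E}(J):=\big\{(v,\rho):\ v\in H^1_p(J;L_p(\Omega))\cap L_p(J;H^2_p(\Omega\setminus\Sigma)),\ \rho\in W^{4-1/(2p)}_p(J;L_p(\Sigma))\cap\cdots\big\},
\end{equation*}
with the correct trace spaces, \emph{provided} the data satisfy precisely the stated compatibility conditions: $[\![\psi(u_0)]\!]+\sigma\cH(\Gamma_0)=0$ at order zero, and $[\![d(u_0)\partial_{\nu_{\Gamma_0}}u_0]\!]\in W^{2-6/p}_p(\Gamma_0)$ at the next order (this extra regularity of the initial flux is the compatibility forced by the dynamic boundary condition in the $L_p$-framework, and is where $p>n+2$ is used to have the relevant embeddings into $C$ and to make the nonlinearities act well).

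Next I would treat the full quasilinear problem by writing it as $\mathcal{L}(v_0,\rho_0)(v,\rho)=N(v,\rho)$ where $\mathcal{L}$ is the maximal-regularity isomorphism from the previous step and $N$ collects all lower-order and nonlinear terms — namely $\kappa(v)\cR(\rho)v$, the variable-coefficient corrections $d(v)[M_1+M_1^{\sf T}-M_1M_1^{\sf T}]:\nabla^2 v$ and the gradient terms in $\cA(v,\rho)v$, the correction terms in $\cB(v,\rho)v$, the difference $\sigma(\cH(\rho)-\cH'(0)\rho)$, and the quadratic-in-$\partial_t\rho$ term $-\gamma(v)\beta(\rho)(\partial_t\rho)^2$ which vanishes identically here since $\gamma\equiv 0$. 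Using $p>n+2$, the embedding $\mathbb{E}(J)\hookrightarrow C(J;C^1(\bar\Omega))\times C(J;C^2(\Sigma))$ (roughly) gives that $N$ maps a small ball in the affine space of functions with the right initial data into the target space, and is Lipschitz with small constant on that ball for $\tau$ small; crucially the highest-order terms in $N$ carry a factor that is small because $\rho$ stays close to $\rho_0$ (small in $C^2$) and $v$ close to $v_0$, and because the time trace at $t=0$ is fixed. A standard fixed point argument in $\mathbb{E}([0,\tau])$ then yields a unique local solution; transforming back via $\Xi_\rho^{-1}$ gives the unique $L_p$-solution of \eqref{stefan}.

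The main obstacle I expect is the careful analysis of the coupled boundary system on $\Sigma$: showing that the pair (linearized Gibbs-Thomson, linearized Stefan law) satisfies the Lopatinskii–Shapiro condition and generates maximal $L_p$-regularity for $\rho$, and identifying the exact compatibility conditions — in particular the role of $l(u_0)\neq 0$, which guarantees that the flux term does not degenerate and that one can invert the boundary symbol; without it the Stefan law fails to be parabolic in $\rho$. The second delicate point is the bookkeeping of function spaces: verifying that each nonlinear term in $N$ indeed lands in the right trace/regularity class with a small Lipschitz constant, which is precisely why the restriction $p>n+2$ (rather than the expected $p>(n+2)/2$) is imposed here for simplicity. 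Everything else — the transport-type identity for $\cR(\rho)v$, invertibility of $I+\theta_\rho'$ for small $\rho$, the smoothness of $u\mapsto\psi(u),\kappa(u),d(u)$ on compact subsets of $(0,\infty)$ using $v_0>0$ and continuity — is routine given the structure laid out in Section~2.
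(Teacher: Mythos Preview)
Your proposal is correct and follows essentially the same route as the paper: Hanzawa transformation to a fixed interface, maximal $L_p$-regularity for the principal linearization (via \cite{DPZ08} and \cite{EPS03}, using the Lopatinskii--Shapiro condition and the non-degeneracy $l(u_0)\neq 0$), and a contraction mapping argument exploiting $p>n+2$ for the nonlinear remainder. The paper additionally inserts an explicit reduction-to-zero-initial-data step (solving the linear problem with extended data $e^{\Delta_\Sigma t}G(v_0,\rho_0)$, $e^{\Delta_\Sigma t}H(v_0,\rho_0)$) before the fixed point, and freezes the coefficients $l_1,\gamma_1$ along $e^{\Delta_\Sigma t}v_{0\Sigma}$ to ensure they lie in the multiplier algebra $\FF_2(J)$; your sketch absorbs this into ``standard fixed point argument,'' which is fine but worth making explicit. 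One slip: your time-regularity exponent for $\rho$ is off --- in the $\gamma\equiv 0$ case the correct class is $W^{3/2-1/2p}_p(J;L_p(\Sigma))\cap W^{1-1/2p}_p(J;H^2_p(\Sigma))\cap L_p(J;W^{4-1/p}_p(\Sigma))$, not $W^{4-1/(2p)}_p$ in time.
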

\noindent Here the notation $\Gamma_0\in W^{4-3/p}_p$ means that
$\Gamma_0$ is a $C^2$-manifold, such that its (outer) normal field
$\nu_{\Gamma_0}$ is of class $W^{3-3/p}_p(\Gamma_0)$. Therefore the
Weingarten tensor
 $L_{\Gamma_0}=-\nabla_{\Gamma_0}\nu_{\Gamma_0}$ of $\Gamma_0$ belongs to $W^{2-3/p}_p(\Gamma_0)$ which embeds into
$C^{1+\alpha}(\Gamma_0)$, with $\alpha=1-(n+2)/p>0$ since $p>n+2$ by
assumption. For the same reason, we also have $u_0\in
C^{1+\alpha}(\bar{\Omega})$, and $V_0\in C^{2\alpha}(\Gamma_0)$. The
notion $L_p$-solution means that $(u,\Gamma)$ is obtained as the
push-forward of an $L_p$-solution $(v,\rho)$ of the transformed
problem \eqref{transformed}. This class  will be discussed below.

There is an analogous result in the presence of kinetic undercooling which reads as follows.
\begin{theorem} 
\label{Thm3.2}
{\rm ($\gamma >0$).}
\label{wellposed2} Let $p>n+2$, $\sigma>0$, and suppose
$\psi,\gamma\in C^3(0,\infty)$, $d\in C^2(0,\infty)$ such that
$$\kappa(u)=-u\psi^{\prime\prime}(u)>0,\quad d(u)>0,\quad \gamma(u)>0,\quad u\in(0,\infty).$$ Assume
the {\em regularity conditions}
$$u_0\in W^{2-2/p}_p(\Omega\setminus\Gamma_0)\cap C(\bar{\Omega}),\quad u_0>0,\quad \Gamma_0\in W^{4-3/p}_p,$$
and the {\em compatibility conditions}
\begin{equation*}
\begin{split}
\partial_{\nu_\Omega}u_0=0, \;\;
\big([\![\psi(u_0)]\!]+\sigma \cH(\Gamma_0)\big)\big(l(u_0)-[\![\psi(u_0)]\!]-\sigma \cH(\Gamma_0)\big)=\gamma(u_0)[\![d(u_0)\partial_\nu u_0]\!].
\end{split}
\end{equation*}
\noindent
Then there exists a unique $L_p$-solution of the Stefan problem with surface tension
and kinetic undercooling \eqref{stefan} on some possibly small but nontrivial time interval $J=[0,\tau]$.
\end{theorem}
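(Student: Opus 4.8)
The plan is to adapt the proof of \thmref{wellposed1}, taking advantage of the fact that, when $\gamma>0$, the Gibbs--Thomson relation (the fourth equation of \eqref{transformed}) becomes a genuine parabolic evolution equation for the height function $\rho$ on $\Sigma$. As before, one works with the transformed problem \eqref{transformed} on the fixed configuration $\Omega\setminus\Sigma$; an $L_p$-solution is a pair $(v,\rho)$ lying in a maximal-regularity class $\EE(J)$, where $v\in W^1_p(J;L_p(\Omega))\cap L_p(J;W^2_p(\Omega\setminus\Sigma))$ and $\rho$ belongs to the anisotropic Slobodeckij class dictated by a second-order-in-space parabolic equation on $\Sigma$, so that in particular $\partial_t\rho$ is trace-regular on $\Sigma$ and $\gamma(v)\beta(\rho)\partial_t\rho$ has the same regularity as the flux $[\![d(v)\partial_{\nu_\Sigma}v]\!]$. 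The compatibility condition in the theorem is precisely the requirement that the two interface equations of \eqref{transformed} hold at $t=0$: the fourth equation at $t=0$ determines $V_0=\beta(\rho_0)\partial_t\rho(0)=\big([\![\psi(u_0)]\!]+\sigma\cH(\Gamma_0)\big)/\gamma(u_0)$, and inserting this into the fifth equation at $t=0$, together with $\cB(v_0,\rho_0)v_0=-[\![d(u_0)\partial_{\nu_{\Gamma_0}}u_0]\!]$, yields exactly $\big([\![\psi(u_0)]\!]+\sigma\cH(\Gamma_0)\big)\big(l(u_0)-[\![\psi(u_0)]\!]-\sigma\cH(\Gamma_0)\big)=\gamma(u_0)[\![d(u_0)\partial_\nu u_0]\!]$. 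In contrast to \thmref{wellposed1}, no condition of the form $l(u_0)\neq0$ is required: the kinetic term already renders the interface evolution parabolic, and the extra flux regularity that had to be imposed when $\gamma=0$ is now produced automatically by the $\rho$-dynamics.

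First I would linearize \eqref{transformed} at $t=0$ about the frozen state $(v_0,\rho_0)$ and recast the problem as $L(v,\rho)=N(v,\rho)$, where $L$ consists of the principal parts --- $\kappa(v_0)\partial_t v-d(v_0)\Delta v$ in $\Omega\setminus\Sigma$, the Neumann condition $\partial_{\nu_\Omega}v$ on $\partial\Omega$, the transmission condition $[\![v]\!]$, the linearized Gibbs--Thomson operator $\gamma(v_0)\partial_t\rho-\tfrac{\sigma}{n-1}\big(\Delta_\Sigma+{\rm tr}\,L_\Sigma^2\big)\rho-[\![\psi'(v_0)]\!]\,v|_\Sigma$ on $\Sigma$ (using $\cH'(0)$ from Section~2), and the linearized Stefan operator, which couples $[\![d(v_0)\partial_{\nu_\Sigma}v]\!]$ to $v|_\Sigma$ and to $\Delta_\Sigma\rho$ --- while $N$ collects all remaining terms: the $M_1(\rho)$- and $\alpha(\rho)$-corrections in $\cA,\cB$ and $\cR$, the curvature remainder $\cH(\rho)-\cH'(0)\rho$, the temperature nonlinearities in $\psi,\gamma,l,d$ and $\kappa$, the factor $\beta(\rho)-1$, and the term quadratic in $\partial_t\rho$. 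The heart of the matter is to prove maximal $L_p$-regularity for $L$, i.e.\ that $L$ is a topological isomorphism from $\EE(J)$ onto the data space $\FF(J)$ (inhomogeneities in their natural regularity classes, together with initial data $(v_0,\rho_0)$ subject to the compatibility relations). This is carried out by the standard localization procedure: freeze the coefficients and flatten $\Sigma$, reducing to a constant-coefficient two-phase model problem on $\R^n$ with interface $\{x_n=0\}$ carrying the coupled dynamic boundary conditions; verify parabolicity (from $\kappa,d,\gamma,\sigma>0$) and the Lopatinskii--Shapiro condition for the boundary symbol of the coupled $(v,\rho)$-system; then glue the model solutions together by a Neumann-series perturbation, in the spirit of \cite{EPS03,DPZ08}. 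Verifying the Lopatinskii--Shapiro condition --- computing the determinant of the $(v,\rho)$ boundary symbol and showing it stays bounded away from zero for $(\lambda,\xi)\neq0$ in the relevant sector --- is the main technical obstacle; positivity and the fact that the $\rho$-equation decouples at leading order keep it tractable, but care is needed because the Stefan condition contains $\cH'(0)\rho$, i.e.\ second tangential derivatives of $\rho$.

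Once maximal regularity of $L$ is available, I would close the argument by a contraction mapping in a small ball of $\EE(J)$: the map $w\mapsto L^{-1}N(w)$ is well defined because, thanks to the compatibility condition, $N(w)$ evaluated at an admissible $w$ lands in $\FF(J)$, including the trace compatibility at $t=0$; it maps a small ball into itself and is a strict contraction once $\tau=|J|$ is small, since the top-order contributions to $N$ carry a factor that is small in the $C^2$-norm (the height function stays near $0$ after choosing $\Sigma$ close to $\Gamma_0$ and using continuity in time), while the lower-order terms are controlled through the embeddings $\EE(J)\hookrightarrow C(J;C^{1+\alpha})$ and the like, valid because $p>n+2$, and pick up a small factor from $\tau\to0$ after subtracting their $t=0$ values. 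The unique fixed point is the desired $L_p$-solution $(v,\rho)$ of \eqref{transformed}, and its push-forward $u(t,x)=v(t,\Xi_\rho^{-1}(t,x))$, $\Gamma(t)=\Xi_\rho(t,\Sigma)$, is the unique $L_p$-solution of \eqref{stefan} on $J$; uniqueness within the solution class follows from the contraction estimate (or directly from a Gronwall argument on the difference of two solutions).
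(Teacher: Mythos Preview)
Your overall strategy is correct and would yield a valid proof, but the paper establishes maximal regularity for the linearized problem in a different, simpler way than the Lopatinskii--Shapiro verification you propose. In the paper's Theorem~\ref{linpneq0} the key observation is that for $\gamma_1>0$ the term $l_0\partial_t\rho$ in the Stefan equation is of \emph{lower order}. Treating $v_\Sigma:=v|_\Sigma$ as known, the Gibbs--Thomson relation becomes a genuine parabolic equation $\gamma_1\partial_t\rho-\sigma_0\Delta_\Sigma\rho=l_1v_\Sigma-g$, solvable with maximal regularity; the transmission problem for $v$ (with Neumann datum $h-l_0\partial_t\rho$) is then solved; taking the trace yields $v_\Sigma=Tv_\Sigma+w$ with $T$ compact on $\FF_2$, since $\partial_t\rho$ gains regularity beyond what the transmission problem requires (this is exactly the ``decoupling at leading order'' you noted). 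Hence $I-T$ is Fredholm of index zero and, being injective by causality, invertible. This bypasses the boundary-symbol computation entirely. Your route via localization and Lopatinskii--Shapiro is also valid---the paper explicitly says so and records the boundary symbol in Remark~3.6---but is more laborious.

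One inaccuracy: in the paper's principal linearization \eqref{transformed-decomposed}, the Stefan equation is $l_0\partial_t\rho-[\![d_0\partial_\nu v]\!]=H(v,\rho)$; it contains neither $v|_\Sigma$ nor $\Delta_\Sigma\rho$ in its principal part. The curvature contribution $\cH'(0)\rho$ sits only in the Gibbs--Thomson equation. So your concern that ``the Stefan condition contains $\cH'(0)\rho$'' is misplaced---the Stefan law contributes only the lower-order piece $\lambda l^2$ to the boundary symbol (cf.\ Remark~3.6), which is precisely why the case $\gamma>0$ is easier than $\gamma=0$.
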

\noindent
{\bf Proof of Theorems \ref{wellposed1} and \ref{wellposed2}:}\\
(i) {\em  Direct mapping method: Hanzawa transformation}. \\
As explained in the previous section, we employ a Hanzawa transformation and
study the resulting problem \eqref{transformed} on the domain $\Omega$ with fixed interface $\Sigma$.

In case $\gamma\equiv 0$, for the $L_p$-theory,
the solution of the transformed problem will belong to the class
$$ v\in H^1_p(J;L_p(\Omega))\cap L_p(J;H^2_p(\Omega\setminus\Sigma))
\hookrightarrow C(J;W^{2-2/p}_p(\Omega\setminus\Sigma)),$$
$$\rho\in W^{1-1/2p}_p(J;H^2_p(\Sigma))\cap L_p(J;W^{4-1/p}_p(\Sigma))
\hookrightarrow C(J;W^{4-3/p}_p(\Sigma)),$$
\begin{equation} \label{specialembedding}
\partial_t\rho \in
W^{1/2-1/2p}_p(J;L_p(\Sigma))\cap
L_p(J;W^{2-2/p}_p(\Sigma))\hookrightarrow C(J;W^{2-6/p}_p(\Sigma)),
\end{equation}
see \cite{EPS03} for a proof of the last two embeddings in the case
$\Sigma=\R^n$.

If $\gamma>0$ we have moreover
$$\rho\in W^{2-1/2p}_p(J;L_p(\Sigma))\cap L_p(J;W^{4-1/p}_p(\Sigma))
\hookrightarrow C^1(J;W^{2-3/p}_p(\Sigma)).$$
Note that in both cases, $v\in C(J\times\bar{\Omega})$, $v|_{\Omega_j}\in C(J;C^1(\bar{\Omega}_j))$, $j=1,2$.
Moreover, $\rho\in C(J;C^3(\Sigma))$ and
\begin{equation*}
\begin{split}
\partial_t\rho\in C(J;C(\Sigma))\text{ in case } \gamma=0,
\quad \partial_t\rho\in C(J;C^1(\Sigma))\text{ in case } \gamma>0.
\end{split}
\end{equation*}
We set
\begin{align*}
&\EE_1(J):=\{v\in H^1_p(J;L_p(\Omega))\cap
L_p(J;H^2_p(\Omega\setminus\Sigma):
[\![v]\!]=0,\ \partial_{\nu_\Omega}v=0 \},\\
&\EE_2(J):= W^{3/2-1/2p}_p(J;L_p(\Sigma))\cap
W^{1-1/2p}_p(J;H^2_p(\Sigma))\cap L_p(J;W^{4-1/p}_p(\Sigma)),
\ \gamma\equiv 0,\\
&\EE_2(J):= W^{2-1/2p}_p(J;L_p(\Sigma))\cap L_p(J;W^{4-1/p}_p(\Sigma)),\ \gamma>0,\\
&\EE(J)\;\,:= \EE_1(J)\times \EE_2(J),
\end{align*}
i.e.\ $\EE(J)$ denotes the solution space.
Similarly, we define
\begin{align*}
&\FF_1(J):=L_p(J;L_p(\Omega)),\\
&\FF_2(J) :=W^{1-1/2p}_p(J;L_p(\Sigma))\cap L_p(J;W^{2-1/p}_p(\Sigma)),\\
&\FF_3(J):=W^{1/2-1/2p}_p(J;L_p(\Sigma))\cap L_p(J;W^{1-1/p}_p(\Sigma)),\\
&\FF(J)\;\,:= \FF_1(J)\times\FF_2(J)\times \FF_3(J),
\end{align*}
i.e.\ $\FF(J)$ means the space of data. A left subscript zero means vanishing time trace at $t=0$, whenever it exists. So for example
$${_0}\EE_2(J)=\{\rho\in \EE_2(J):\; \rho(0)=\partial_t\rho(0)=0\}$$
whenever $p>3$. \noindent
Employing the calculations in Section 2 and splitting into the principal linear part and a nonlinear part,
we arrive at the following formulation of problem \eqref{transformed}.
\begin{equation}
\label{transformed-decomposed}
\left\{\begin{aligned}
\kappa_0(x)\partial_t v -d_0(x)\Delta v &=F(v,\rho)
 &&\text{in}&&\Omega\setminus\Sigma\\
\partial_{\nu_\Omega} v &=0
 &&\text{on}&&\partial \Omega\\
[\![v]\!]&=0, &&\text{on}&&\Sigma\\
l_1(t,x)v + \sigma_0\Delta_\Sigma \rho- \gamma_1(t,x)\partial_t\rho &=G(v,\rho)
 &&\text{on}&&\Sigma\\
\mbox{}l_0(x)\partial_t\rho -[\![d_0(x)\partial_\nu v]\!]&=H(v,\rho)
 &&\text{on}&&\Sigma\\
v(0)=v_0,\ \rho(0)&=\rho_0.&&
\end{aligned}\right.
\end{equation}
Here
\begin{equation*}
\begin{split}
&\kappa_0(x)=\kappa(v_0(x)),\quad d_0(x)=d(v_0(x)),\quad l_0(x)=l(v_0(x)),
\quad\sigma_0=\frac{\sigma}{n-1},\\
&l_1(t,\cdot)= [\![\psi^\prime(e^{\Delta_\Sigma t}v_{0\Sigma})]\!],
\quad\gamma_1(t,\cdot)=\gamma(e^{\Delta_\Sigma t}v_{0\Sigma}),
\end{split}
\end{equation*}
where $v_{0\Sigma}$ means the restriction of $v_0$ to $\Sigma$. Note
that $\kappa_0,d_0\in W^{2-2/p}_p(\Omega\setminus\Sigma)$, hence
these functions are in $C^1(\bar{\Omega}_j)$, $j=1,2$. Recall that
$d$ and $\kappa$ may be different in different phases. Further we
have $l_0\in W^{2-3/p}_p(\Sigma)$ which implies $l_0\in
C^1(\Sigma)$. This is good enough for the space $\FF_3(J)$, as
$C^1$-functions are pointwise multipliers for $\FF_3(J)$, but it is
not good enough for $\FF_2(J)$. For this reason, we need to define
the extension $v_b:=e^{\Delta_\Sigma t}v_{0\Sigma}$. This function
as well as $l_1$ and $\gamma_1$ belong to $\FF_2(J)$, hence are
pointwise multipliers for this space, as $\FF_2(J)$ and $\FF_3(J)$
are Banach algebras w.r.t.\ pointwise multiplication, as $p>n+2$.

The nonlinearities $F$, $G$, and $H$ are defined as follows.
\begin{align}
F(v,\rho)&= (\kappa_0-\kappa(v))\partial_t v+(d(v)-d_0)\Delta v - d(v)M_2(\rho):\nabla^2 v\nonumber\\
&\quad +d^\prime(v)|(I-M_1(\rho))\nabla v|^2 - d(v)(M_3(\rho)|\nabla v)+\kappa(v)\cR(\rho)v,\nonumber\\
G(v,\rho)&=-([\![\psi(v)]\!]+\sigma \cH(\rho)) +l_1 v +{\sigma_0} \Delta_\Sigma \rho+(\gamma(v)\beta(\rho)-\gamma_1)\partial_t\rho,
\label{FGHdef}\\
H(v,\rho)&=[\![(d(v)-d_0)\partial_\nu v]\!]+ (l_0-l(v))\partial_t\rho
-([\![d(v)\nabla v]\!]|M_4(\rho)\nabla_\Sigma\rho)\nonumber\\
&\quad +\gamma(v)\beta(\rho)(\partial_t\rho)^2.\nonumber
\end{align}
Here we have set
\begin{align*}M_2(\rho)&=M_1(\rho)+M_1^{\sf T}(\rho)-M_1(\rho)M_1^{\sf T}(\rho),\\
M_3(\rho)&=(I-M_1(\rho)):\nabla M_1(\rho), \\
M_4(\rho)&=(I-M_1(\rho))^{\sf T}M_0(\rho).
\end{align*}
(ii) {\em Maximal regularity of the principal linearized problem}.\\
First we consider
the linear problem defined by the left hand side of \eqref{transformed-decomposed}.
\begin{equation}
\label{prlin}
\left\{\begin{aligned}
\kappa_0(x)\partial_t v -d_0(x)\Delta v &=f
 &&\text{in}&&\Omega\setminus\Sigma\\
\partial_{\nu_\Omega} v &=0
 &&\text{on}&&\partial \Omega\\
[\![v]\!]&=0 &&\text{on}&&\Sigma\\
l_1(t,x)v + \sigma_0\Delta_\Sigma \rho- \gamma_1(t,x)\partial_t\rho &=g
 &&\text{on}&&\Sigma\\
\mbox{}l_0(x)\partial_t\rho -[\![d_0(x)\partial_\nu v]\!]&=h
 &&\text{on}&&\Sigma\\
v(0)=v_0,\ \rho(0)&=\rho_0.&&
\end{aligned}\right.
\end{equation}

\noindent
This inhomogeneous problem can be solved with maximal regularity; see
Escher, Pr\"uss, and Simonett \cite{EPS03} for the constant coefficient
half-space case with $\gamma\equiv 0$, and Denk, Pr\"uss, and Zacher \cite{DPZ08} for the general
one-phase case.
\begin{theorem} \label{linmaxreggamma=0} {\rm ($\gamma\equiv 0$).}
\label{linpeq0} Let $p>n+2$, $\sigma>0$, $\gamma\equiv0$. Suppose
$\kappa_0\in C(\bar{\Omega}_j)$ and $d_0\in C^1(\bar{\Omega}_j)$,
$j=1,2$, $\kappa_0,d_0>0$ on $\bar{\Omega}$, $l_0\in W_p^{2-6/p}(\Sigma)$, and let
\begin{equation*}
l_1\in W^{1-1/2p}_p(J;L_p(\Sigma))\cap L_p(J;W^{2-1/p}_p(\Sigma))
\end{equation*}
  such that $l_0l_1>0$ on $J\times\Sigma$, where $J=[0,t_0]$ is a finite time interval. Then there is a unique solution $z:=(v,\rho)\in\EE(J)$
of (\ref{prlin}) if and only if the data $(f,g,h)$ and $z_0:=(v_0,\rho_0)$ satisfy
\begin{equation*}
(f,g,h)\in \FF(J),\quad z_0\in [W^{2-2/p}_p(\Omega\setminus\Sigma)\cap C(\bar{\Omega})]\times W^{4-3/p}_p(\Sigma),\end{equation*}
and the compatibility conditions
 $$\partial_{\nu_\Omega}v_0=0,\quad
 l_1(0)v_0+\sigma_0 \Delta_\Sigma \rho_0=g(0), \quad h(0)+[\![d_0\partial_\nu v_0]\!]
\in W^{2-6/p}_p(\Sigma).$$
The solution map $[(f,g,h,z_0)\mapsto z=(v,\rho)]$ is continuous between the corresponding spaces.
\end{theorem}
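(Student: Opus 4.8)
The plan is to decompose the inhomogeneous problem \eqref{prlin} into two subproblems and treat each by maximal regularity theory. First I would reduce to the case of vanishing initial data. Given $z_0=(v_0,\rho_0)$ satisfying the stated regularity and compatibility conditions, one constructs a fixed reference function $z_\ast=(v_\ast,\rho_\ast)\in\EE(J)$ with $z_\ast(0)=z_0$ (and $\partial_t\rho_\ast(0)$ prescribed consistently with the fifth equation at $t=0$); the obvious choice is to let $v_\ast$ solve the homogeneous heat-type equation $\kappa_0\partial_t v_\ast - d_0\Delta v_\ast=0$ with the Neumann and transmission conditions and $v_\ast(0)=v_0$, and to take $\rho_\ast$ as a suitable extension of $\rho_0$ (e.g.\ via the analytic semigroup $e^{\Delta_\Sigma t}$, or the solution operator of the fourth-plus-fifth equations). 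Subtracting $z_\ast$ reduces everything to finding $z\in{_0}\EE(J)$ solving \eqref{prlin} with modified right-hand side in $\FF(J)$ and with the compatibility conditions now in homogeneous form; the trace embeddings for $\EE_1,\EE_2$ guarantee the modified data still lie in $\FF(J)$ with the correct vanishing traces.

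The core step is then solving \eqref{prlin} with $z_0=0$. Here I would first treat the bulk equation: for given $\rho$ (hence given $\partial_t\rho$ on $\Sigma$ in the appropriate trace class), the first three lines of \eqref{prlin} together with the fifth line — read as a Neumann-type condition $[\![d_0\partial_\nu v]\!]=l_0\partial_t\rho-h$ on $\Sigma$ — form a two-phase parabolic transmission problem for $v$ with variable coefficients $\kappa_0\in C(\bar\Omega_j)$, $d_0\in C^1(\bar\Omega_j)$. By the localization/perturbation argument (freezing coefficients, flattening $\Sigma$ and $\partial\Omega$, and invoking the constant-coefficient half-space result of \cite{EPS03} for $\gamma\equiv 0$, resp.\ \cite{DPZ08} for the one-phase building block), this problem has maximal $L_p$-regularity: $v\in\EE_1(J)$ depends continuously and linearly on $(f,h+l_0\partial_t\rho,\rho_0=0)$. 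Substituting the resulting solution operator into the fourth line of \eqref{prlin} yields a single equation on $\Sigma$ for $\rho$ of the schematic form $l_1 v[\rho] + \sigma_0\Delta_\Sigma\rho - \gamma_1\partial_t\rho = g$; since $\gamma_1\equiv 0$ when $\gamma\equiv 0$, this is actually $\sigma_0\Delta_\Sigma\rho + (\text{lower-order in }\rho\text{ via }v[\rho]) = g - l_1 v[f,h]$. The key structural point making this solvable is that the Dirichlet-to-Neumann type operator $\rho\mapsto l_1 v[\,l_0\partial_t\rho\,]|_\Sigma$ is, to leading order, a positive operator of order one (a square root of $-\Delta_\Sigma$ type, coming from the parabolic half-space symbol), and it enters with the sign making $-\sigma_0\Delta_\Sigma + (\text{order-one positive})$ invertible — this is precisely where the sign condition $l_0 l_1>0$ is used. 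One then obtains $\rho\in\EE_2(J)$ with continuous dependence, using the mapping properties of the parabolic Dirichlet-to-Neumann operator on the anisotropic spaces $\FF_2(J),\FF_3(J)$ and the Banach-algebra property $p>n+2$ to handle the time-and-space-dependent coefficients $l_1$ as multipliers.

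The main obstacle I expect is the analysis of this reduced equation on $\Sigma$, i.e.\ establishing that the relevant Dirichlet-to-Neumann operator has the right order and positivity in the anisotropic $L_p$-Sobolev scale and that $-\sigma_0\Delta_\Sigma$ dominates it in the correct functional-analytic sense (invertibility with bounded inverse between $\EE_2(J)$ and $\FF_2(J)$, uniformly on the finite interval $J$). This is the step where the half-space computations of \cite{EPS03} must be imported carefully — the surface tension term $\sigma_0\Delta_\Sigma\rho$ is the top-order term in the boundary symbol and the factor $l_0 l_1$ controls the lower-order correction; one must also verify that freezing the coefficients $l_0,l_1$ at $t=0$ and at points of $\Sigma$ and absorbing the differences as small perturbations is legitimate in these spaces, which again relies on $p>n+2$ and on a Neumann-series argument on short time intervals followed by a concatenation to cover all of $J$. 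The only-if direction (necessity of the compatibility conditions) is immediate by taking traces at $t=0$ in \eqref{prlin} and using the embeddings \eqref{specialembedding}.
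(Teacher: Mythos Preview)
Your strategy is genuinely different from the paper's. The paper does not eliminate $v$ to obtain a reduced equation for $\rho$ on $\Sigma$; instead it localizes directly to a constant-coefficient model problem on $\dot{\R}^n$ with flat interface, reflects the lower half-space onto the upper to cast the two-phase transmission problem as a one-phase boundary problem of the type covered by \cite{DPZ08}, verifies the Lopatinskii--Shapiro condition for that model, and invokes Theorems~2.1 and~2.2 there. The variable-coefficient, curved-interface case then follows by the same localization/freezing/perturbation scheme as in the one-phase theory. This is considerably shorter because it outsources the hard analysis to the black-box machinery of \cite{DPZ08}.

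Your elimination approach can in principle be pushed through, but the way you describe the reduced boundary equation contains a real gap. You characterize it as ``$-\sigma_0\Delta_\Sigma+(\text{order-one positive})$'' with $\Delta_\Sigma$ dominating. That is not correct in the anisotropic parabolic scale. The Neumann-to-Dirichlet contribution $\rho\mapsto l_1\,v[\,l_0\partial_t\rho\,]|_\Sigma$ carries the time derivative; in the half-space model its symbol is, up to a sign,
\[
l_0l_1\,\lambda\Big(\sqrt{\lambda\kappa_1+d_1|\xi|^2}+\sqrt{\lambda\kappa_2+d_2|\xi|^2}\Big)^{-1},
\]
and neither this nor $\sigma_0|\xi|^2$ dominates the other uniformly on $\{\text{Re}\,\lambda\ge 0\}\times\R^{n-1}$. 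Crucially, $\Delta_\Sigma$ alone cannot produce the required time regularity $W^{3/2-1/2p}_p(J;L_p(\Sigma))$ for $\rho\in\EE_2(J)$ from data in $\FF_2(J)$; that regularity comes precisely from the coupling term. What actually has to be shown is that the \emph{combined} symbol (equivalently, the boundary symbol $s(\lambda,\xi)=\lambda\,l_0l_1+\sigma_0|\xi|^2\big(\sqrt{\lambda\kappa_1+d_1|\xi|^2}+\sqrt{\lambda\kappa_2+d_2|\xi|^2}\big)$ of the paper's Remark following Theorem~\ref{linpneq0}) is nonvanishing with the right bounds --- the sign condition $l_0l_1>0$ ensures both summands lie in the same half-plane. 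Establishing invertibility of the resulting operator between $\EE_2(J)$ and $\FF_2(J)$ is exactly the Lopatinskii--Shapiro verification that the paper cites from \cite{DPZ08}; your route arrives at the same core computation, only by elimination rather than by direct inspection of the boundary system. So the approach is salvageable, but not with the ``elliptic-plus-lower-order'' heuristic you propose.
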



\begin{proof} In the one-phase case this result is proved in \cite[Example 3.4]{DPZ08}.
Therefore, we only indicate the necessary modifications for the two-phase case. The
localization procedure can be carried out in the same way as in the one-phase case \cite{DPZ08},
hence we only need to consider the following model problem with constant coefficients where the interface is flat:
\begin{equation*}
\left\{\begin{aligned}
\kappa_0\partial_t v -d\Delta v &=f
 &&\text{in}&&\dot \R^{n}\\
[\![v]\!]&=0
 &&\text{on}&&\R^{n-1}\\
l_1v + \sigma_0\Delta \rho&=g
 &&\text{on}&&\R^{n-1}\\
l_0\partial_t\rho -[\![d\partial_\nu v]\!]&=h
 &&\text{on}&&\R^{n-1}\\
v(0)=v_0,\ \rho(0)&=\rho_0.&&
\end{aligned}\right.
\end{equation*}
Here $\dot\R^n=\R^{n-1}\times\big(\R\setminus\{0\}\big)$, and
$\R^{n-1}$ is identified with $\R^{n-1}\times\{0\}$. Reflecting the
lower half-plane to the upper, this becomes a problem of the form
studied in \cite{DPZ08}. As in Example 3.4 of that paper it is not
difficult to verify the necessary Lopatinskii-Shapiro conditions.
Then Theorems 2.1 and 2.2 of \cite{DPZ08} can be applied, proving
the assertion for the model problem.
\end{proof}
\begin{remark}  One might wonder where the somewhat unexpected compatibility condition
$h(0)+[\![d_0\partial_\nu v_0]\!] \in W^{2-6/p}_p(\Sigma)$ in the
case $\gamma=0$ comes from. To illuminate this, note that
$$(h(0)+[\![d_0\partial_\nu v_0]\!])/l_0=\partial_t\rho(0)$$
is the trace of $\partial_t\rho$ at time $t=0$. But by the embedding
(\ref{specialembedding}) this implies that
$(h(0)+[\![d_0\partial_\nu v_0]\!])/l_0\in W^{2-6/p}_p(\Sigma)$,
which in turn enforces $h(0)+[\![d_0\partial_{\nu}u_0]\!]\in
W^{2-6/p}_p(\Sigma)$.
\end{remark}
The main result for problem (\ref{prlin}) for $\gamma>0$ is the
following theorem.
\medskip
\begin{theorem}{\rm ($\gamma >0$).}
\label{linpneq0} Let $p>n+2$, $\sigma>0$. Suppose $\kappa_0\in
C(\bar{\Omega}_j)$ and $d_0\in C^1(\bar{\Omega}_j)$, $j=1,2$,
$\kappa_0,d_0>0$ on $\bar{\Omega}$, $l_0\in C^1(\Sigma)$, and let
\begin{equation*}
\gamma_1,l_1\in W^{1-1/2p}_p(J;L_p(\Sigma))\cap L_p(J;W^{2-1/p}_p(\Sigma)),
\end{equation*}
such that $\gamma_1>0$ on $J\times\Sigma$,
where $J=[0,t_0]$ is a finite time interval. Then there is a unique solution $z:=(v,\rho)\in\EE(J)$
of (\ref{prlin}) if and only if the data $(f,g,h)$ and $z_0:=(v_0,\rho_0)$ satisfy
\begin{equation*}
(f,g,h)\in \FF(J),\quad z_0\in [W^{2-2/p}_p(\Omega\setminus\Sigma)
\cap C(\bar{\Omega})]\times W^{4-3/p}_p(\Sigma),
\end{equation*}
and the compatibility conditions
$$\partial_{\nu_\Omega}v_0=0,\quad (l_0l_1(0){v_0}_{|_{\Sigma}}+l_0\sigma_0 \Delta_\Sigma \rho_0-\gamma_1(0)[\![d\partial_\nu v_0]\!]
= \gamma_1(0)h(0)+l_0g(0).$$
The solution map $[(f,g,h,z_0)\mapsto z=(v,\rho)]$ is continuous between the corresponding spaces.
\end{theorem}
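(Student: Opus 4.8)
The plan is to reduce Theorem~\ref{linpneq0} to the case with a flat interface and constant coefficients via a localization argument, exactly in the spirit of the proof of Theorem~\ref{linpeq0}, and then to verify the Lopatinskii--Shapiro condition for the resulting model problem so that the abstract $L_p$ maximal regularity result of Denk, Pr\"uss, and Zacher \cite{DPZ08} applies. The essential difference to the case $\gamma\equiv 0$ is that the fourth equation in \eqref{prlin} now contains the term $-\gamma_1(t,x)\partial_t\rho$, so $\partial_t\rho$ gains one half-order of regularity (this is why $\EE_2(J)$ is defined differently for $\gamma>0$), and the boundary symbol changes accordingly. One first records that the stated coefficient regularity --- $\kappa_0\in C(\bar\Omega_j)$, $d_0\in C^1(\bar\Omega_j)$, $l_0\in C^1(\Sigma)$, and $l_1,\gamma_1\in W^{1-1/2p}_p(J;L_p(\Sigma))\cap L_p(J;W^{2-1/p}_p(\Sigma))$ --- suffices: since $p>n+2$, the spaces $\FF_2(J)$ and $\FF_3(J)$ are Banach algebras, $C^1(\bar\Omega_j)$-functions are pointwise multipliers on $\FF_3(J)$, and $l_1,\gamma_1$ as elements of $\FF_2(J)$ multiply $\FF_2(J)$ into itself; this legitimizes freezing coefficients and absorbing the lower-order perturbations into the right-hand side by a Neumann-series/contraction argument on a small time interval, which is then patched to all of $J$.

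Next I would carry out the localization: choose a finite atlas of charts flattening $\Sigma$, a subordinate partition of unity, and use that the principal part is $\kappa_0\partial_t v-d_0\Delta v$ in the bulk together with the two boundary operators on $\Sigma$; multiplying the solution by a cut-off and commuting it through produces lower-order commutator terms lying in the ``good'' spaces $\FF(J)$, hence harmless. Near a boundary chart one is reduced to the half-space model problem
\begin{equation*}
\left\{\begin{aligned}
\kappa_0\partial_t v-d\Delta v &=f &&\text{in}&&\dot\R^n\\
[\![v]\!]&=0 &&\text{on}&&\R^{n-1}\\
l_1 v+\sigma_0\Delta\rho-\gamma_1\partial_t\rho &=g &&\text{on}&&\R^{n-1}\\
l_0\partial_t\rho-[\![d\partial_\nu v]\!]&=h &&\text{on}&&\R^{n-1}\\
v(0)=v_0,\ \rho(0)&=\rho_0,&&
\end{aligned}\right.
\end{equation*}
with frozen positive constants $\kappa_0,d,l_0,l_1,\gamma_1$ (here $\gamma_1>0$ is used crucially, while no sign condition on $l_0l_1$ is needed, in contrast to the $\gamma\equiv0$ case). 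Reflecting the lower half-space onto the upper one turns this into a transmission problem of precisely the type covered by Theorems~2.1 and 2.2 of \cite{DPZ08}, provided the Lopatinskii--Shapiro (normal ellipticity and boundary) conditions hold.

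The main work, and the step I expect to be the real obstacle, is checking the Lopatinskii--Shapiro condition for this model problem. After Laplace transform in $t$ (covariable $\lambda$, $\mathrm{Re}\,\lambda\ge 0$, $\lambda\ne0$) and Fourier transform in the tangential variables (covariable $\xi\in\R^{n-1}$, $(\lambda,\xi)\ne0$), the bulk equation gives exponentially decaying solutions $v_\pm(y)=a_\pm e^{\mp\omega y}$ with $\omega=\sqrt{(\kappa_0\lambda+d|\xi|^2)/d}$, $\mathrm{Re}\,\omega>0$; substituting into the transmission conditions $[\![v]\!]=0$ and the two equations on $\{y=0\}$ yields a $3\times3$ linear system for $(a_+,a_-,\hat\rho)$ whose determinant, up to positive factors, is of the form
\begin{equation*}
(\gamma_1\lambda+\sigma_0|\xi|^2)(\,\cdot\,d\omega\,) + l_0 l_1\,\lambda
\end{equation*}
(the precise shape must be computed), and one must show it never vanishes for $(\lambda,\xi)\ne0$ with $\mathrm{Re}\,\lambda\ge0$. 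The key point is that $\mathrm{Re}(\gamma_1\lambda+\sigma_0|\xi|^2)>0$, $\mathrm{Re}\,\omega>0$, and $\mathrm{Re}\,\lambda\ge0$ with $\gamma_1,\sigma_0,l_0^2,d>0$ force the real part (or an appropriate rotation) of the symbol to be strictly positive, so no nontrivial decaying solution of the homogeneous problem exists; this is exactly the ``not difficult to verify'' computation alluded to for Example~3.4 of \cite{DPZ08}, now with the extra $\gamma_1\lambda$ term which in fact only helps. Once the Lopatinskii--Shapiro condition is verified, Theorems~2.1 and 2.2 of \cite{DPZ08} give existence, uniqueness, and the continuous dependence statement for the model problem; undoing the localization and the perturbation argument then yields the theorem, with the compatibility condition $l_0 l_1(0)v_0|_\Sigma+l_0\sigma_0\Delta_\Sigma\rho_0-\gamma_1(0)[\![d\partial_\nu v_0]\!]=\gamma_1(0)h(0)+l_0 g(0)$ arising as the necessary matching of time traces at $t=0$ of the two boundary equations (eliminating the trace of $\partial_t\rho(0)$ between them).
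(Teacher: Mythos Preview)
Your proposal is correct and would work --- indeed, the paper explicitly acknowledges that one could proceed exactly as you describe, following the strategy of Theorem~\ref{linpeq0} and applying the machinery of \cite{DPZ08}. However, the paper opts for a genuinely different and more elementary argument that exploits the specific structure available when $\gamma_1>0$: in this case the term $l_0\partial_t\rho$ in the fifth equation of \eqref{prlin} is of \emph{lower order} relative to the fourth equation, which is now a true parabolic evolution equation for $\rho$. The paper treats $v_\Sigma:=v|_\Sigma$ as the basic unknown: given $v_\Sigma$, first solve $\gamma_1\partial_t\rho-\sigma_0\Delta_\Sigma\rho=l_1v_\Sigma-g$, $\rho(0)=\rho_0$, by maximal regularity for the Laplace--Beltrami operator; then solve the bulk transmission problem for $v$ with $-[\![d_0\partial_\nu v]\!]=h-l_0\partial_t\rho$; finally, taking the trace on $\Sigma$ yields a fixed-point equation $v_\Sigma=Tv_\Sigma+w$ on $\FF_2(J)$, where $T$ is \emph{compact} thanks to the regularity gain (the embedding $\FF_2(J)\hookrightarrow\FF_3(J)$ is compact). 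Then $I-T$ is Fredholm of index zero, and injectivity follows from causality, so it is invertible. Your localization/Lopatinskii--Shapiro route is more systematic and treats $\gamma\ge 0$ uniformly, at the cost of a symbol computation; the paper's route is shorter and bypasses the model problem entirely, but works only because $\gamma_1>0$ makes the Gibbs--Thomson relation itself parabolic in $\rho$.
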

\begin{proof} The proof of this result is much simpler than for the case $\gamma=0$.
We could follow the strategy in the proof of Theorem
\ref{linmaxreggamma=0}, employing the methods in \cite{DPZ08} once
more. However, here we want to give a more direct argument that uses
the fact that the term $l_0\partial_t\rho$ is of lower order in case
$\gamma_1>0$. For this purpose, suppose $v_\Sigma:=v_{|_{\Sigma}}$
is known. Consider the problem
$$\gamma_1\partial_t\rho- \sigma_0 \Delta_\Sigma\rho =l_1v_\Sigma-g,\quad t\in J,\quad \rho(0)=\rho_0.$$
Since the Laplace-Beltrami operator is strongly elliptic, we can solve this problem
with maximal regularity to obtain $\rho$ in the proper regularity class. Then we solve
the transmission problem
\begin{equation*}
\left\{\begin{aligned}
\kappa_0\partial_tv-d_0\Delta v &=f &&\text{in}&&\Omega\setminus\Sigma\\
\partial_{\nu_\Omega}v &=0 &&\text{on}&&\partial\Omega\\
 [\![v]\!]&=0  &&\text{on}&&\Sigma\\
 -[\![d_0\partial_\nu v]\!]&=h-l_0\partial_t\rho &&\text{on}&&\Sigma\\
 v(0)&=v_0. \\
\end{aligned}\right.
\end{equation*}
Finally, we take the trace of $v$ to obtain an equation for $v_\Sigma$ of the form
$$v_\Sigma= Tv_\Sigma + w,$$
where $w$ is determined by the data alone, and $T$ is a compact operator from
$\FF_2(J) $ into itself. 
Here compactness follows from the compact embedding $\FF_2(J)\hookrightarrow \FF_3(J)$,
i.e.\ from the regularity of $\partial_t\rho$ which is higher than needed to solve the
transmission problem. Thus  $I-T$ is a Fredholm operator
with index zero, hence invertible  since it is injective by causality. This proves the sufficiency
of the conditions on the data. Necessity is a consequence of trace theory.
\end{proof}
\begin{remark} It is interesting to take a look at the boundary symbol of the linear problem. It is of the form
$$s(\lambda,\xi)=\lambda l^2+(\lambda\gamma +\sigma_0|\xi|^2)
[\sqrt{\lambda\kappa_1+d_1|\xi|^2}+\sqrt{\lambda\kappa_2+d_2|\xi|^2}].$$
Here $\lambda\in\C_+$ denotes the covariable of time $t$, and $\xi\in\R^{n-1}$ that of the
tangential space variable $x^\prime\in\R^{n-1}$.
This symbol is invertible for large $\lambda$, provided $\gamma>0$ or $l\neq0$.
 Note that in case $\gamma>0$ this is a parabolic
symbol of order $3/2$ in time $t$ and of order $3$ in the space variables $x$.
The term $\lambda l^2$ is of lower order, thus  $l$ does not affect well-posedness.
On the other hand, for $l=0$ and $\gamma=0$ the boundary symbol is ill-posed, since it admits the
zeros $(\lambda,0)$ with arbitrarily large ${\rm Re}\, \lambda$. If $\gamma=0$ and $l\neq0$,
then it is well-posed. Note that in this case we have order $1$ in time, $3$ in space, but also
the mixed regularity $1/2$ in time and $2$ in space.
\end{remark}

\bigskip

\noindent
(iii) {\em Reduction to zero initial values.}\\
It is convenient to reduce the problem to zero initial data and inhomogeneities with vanishing time trace. This can be achieved as follows.
We solve the linear problem \eqref{prlin} with initial data $v_0$, $\rho_0$ and inhomogenities
\begin{equation*}
f=0, \quad
g(t)=e^{\Delta_\Sigma t}G(v_0,\rho_0),
\quad
h(t)=e^{\Delta_\Sigma t}\rho_1 \text{ with } \rho_1=H(v_0,\rho_0).
\end{equation*}
Since the Laplace-Beltrami operator $\Delta_\Sigma$ has maximal
$L_p$-regularity, the fact that $G(v_0,\rho_0)\in
W^{2-3/p}_p(\Sigma)$ implies $g\in \FF_2(J)$. Similarly, $h\in\FF_3(J)$
since $\rho_1\in W^{1-3/p}_p(\Sigma)$. The compatibility conditions
yield $[\![d_0\partial_\nu v_0]\!]+\rho_1\in W^{2-6/p}_p(\Sigma)$.
Therefore, the linear problem has a unique solution
$z_*:=(v_*,\rho_*)$ with maximal regularity $z_*\in \EE(J)$. Then we
set $\bar{v}=v-v_*$, $\bar{\rho}=\rho-\rho_*$, and obtain the
following problem for  $\bar{z}=(\bar{v},\bar{\rho})$.
\begin{equation}
\label{trans-decomp-red}
\left\{\begin{aligned}
\kappa_0(x)\partial_t \bar{v} -d_0(x)\Delta \bar{v}&=F(\bar{v}+v_*,\bar{\rho}+\rho_*)
&&\text{in}&& \Omega\setminus\Sigma \\
\partial_{\nu_\Omega} \bar{v}&=0 &&\text{on}&& \partial \Omega\\
\mbox{}[\![\bar{v}]\!]&=0 &&\text{on}&& \Sigma\\
l_1(t,x)\bar{v} + \sigma_0 \Delta_\Sigma \bar{\rho}- \gamma_1(t,x)\partial_t\bar{\rho}
&=\bar{G}(\bar{v},\bar{\rho};v_*,\rho_*) &&\text{on}&& \Sigma\\
l_0(x)\partial_t\bar{\rho} -[\![d_0(x)\partial_\nu \bar{v}]\!]
&=\bar H(\bar{v},\bar{\rho}; v_*,\rho_*) &&\text{on}&& \Sigma\\
\bar{v}(0)=0, \ \bar{\rho}(0)&=0.&&
\end{aligned}\right.
\end{equation}
Here we have set
\begin{equation*}
\begin{split}
&\bar{G}(\bar{v},\bar{\rho};v_*,\rho_*)=G(\bar{v}+v_*,\bar{\rho}+\rho_*)-e^{\Delta_\Sigma t}G(v_0,\rho_0),\\
&\bar H(\bar{v},\bar{\rho};v_*,\rho_*)
= H(\bar{v}+v_*,\bar{\rho}+\rho_*)-e^{\Delta_\Sigma t}H(v_0,\rho_0).\\
\end{split}
\end{equation*}
Note that $\bar{G}(0,0;v_0,\rho_0)=\bar{H}(0,0;v_0,\rho_0)=0$
by construction, which ensures time trace zero at $t=0$.

\bigskip

\noindent
(iv) {\em Solution of the nonlinear problem} \\
We first concentrate on the case $\gamma\equiv0$, and rewrite problem \eqref{trans-decomp-red} in abstract form as
$$ \LL \bar{z} = \NN(\bar{z},z_*),$$
where $\LL:{_0\EE}(0,t_0)\to {_0\FF}(0,t_0)$, defined by
\begin{equation*}
\LL\bar{z} =
\big( \kappa_0\partial_t\bar{v}-d_0\Delta \bar{v}, l_1 \bar{v} +\sigma_0 \Delta_\Sigma \bar{\rho}, l_0\partial_t\bar{\rho}-[\![d_0\partial_\nu\bar{v}]\!]\big),
\end{equation*}
is an isomorphism by Theorem \ref{linpeq0}. The nonlinearity
$$
\NN:{_0\EE}(0,t_0)\times\EE(0,t_0)\to {_0\FF}(0,t_0),
$$
given by the right hand side of\eqref{trans-decomp-red},  is of
class $C^1$, since the coefficient functions satisfy $\kappa\in
C^1$, $d,l\in C^2$, $\psi\in C^3$, and by virtue of the embeddings
\begin{equation*}
\EE_1(J)\hookrightarrow C(J\times\bar{\Omega})\cap C(J;C^1(\bar{\Omega}_j)),\quad \EE_2(J)\hookrightarrow C(J;C^3(\Sigma))\cap C^1(J;C(\Sigma)).
\end{equation*}
Observe that the constants in these embeddings blow up as $t_0\to 0$, however, they are uniform in $t_0$ if one considers the space ${_0\EE}(J)$!


We want to apply the contraction mapping principle. For this purpose we consider a closed ball
$\BB_R(0)\subset{_0\EE}(0,\tau)$, where the radius $R>0$ and the final time $\tau\in(0,t_0]$ are at our disposal.
We rewrite the abstract equation $\LL\bar{z}=\NN(\bar{z},z_*)$ as the fixed point equation
\begin{equation*}
\bar{z}=\LL^{-1}\NN(\bar{z},z_*)=:\TT(\bar{z}), \quad \bar{z}\in \BB_R(0).
\end{equation*}
Since we are working in an $L_p$-setting, by choosing $\tau=\tau(R)$ small enough we can assure that
\begin{equation*}
\Ver\TT(0)\Ver_{\EE(0,\tau)}=\Ver\LL^{-1}\NN(0,z_*)\Ver_{\EE(0,\tau)}\leq R/2.
\end{equation*}
On the other hand, we have
\begin{align*}
\Ver\TT(z_1)-\TT(z_2)\Ver_{\EE(0,\tau)}
&\leq \Ver\LL^{-1}\Ver_{\cB({_0\FF}(0,\tau),{_0\EE}(0,\tau))}\times\\
 &\times\sup_{\Ver\bar{z}\Ver_{{_0\EE}(0,\tau)}\le R}
 \Ver\NN^\prime(\bar{z},z_*)\Ver_{\cB({_0\EE}(0,\tau),{_0\FF}(0,\tau))}\Ver z_1-z_2\Ver_{\EE(0,\tau)},
 \end{align*}
hence $\TT(\BB_R(0))\subset\BB_R(0)$ and $\TT$ is a strict contraction, provided we have
\begin{equation*}
\Ver\LL^{-1}\Ver_{\cB({_0\FF}(0,\tau),{_0\EE}(0,\tau))} \sup_{\Ver\bar{z}\Ver_{{_0\EE}(0,\tau)}
\le R} \Ver\NN^\prime(\bar{z},z_*)\Ver_{\cB({_0\EE}(0,\tau),{_0\FF}(0,\tau))}\leq 1/2.
\end{equation*}
For this we observe that
$$
\Ver\LL^{-1}\Ver_{\cB({_0\FF}(0,\tau),{_0\EE}(0,\tau))}
\leq\Ver\LL^{-1}\Ver_{\cB({_0\FF}(0,t_0),{_0\EE}(0,t_0))}=:C_M<\infty$$
is uniform in $\tau\in(0,t_0)$, since we have vanishing time traces
at $t=0$. So it remains to estimate the Frech\'et-derivative of
$\NN$  on the ball $\BB_R(0)\subset {_0\EE}(0,\tau)$. This is the
content of the next proposition, which also covers the case
$\gamma>0$.
\begin{proposition}
\label{nonlinearities} Let $p>n+2$, $\sigma\in\R$, and suppose
$\psi,\gamma\in C^3(0,\infty)$ and $d\in C^2(0,\infty)$.
Then $\NN:{_0\EE}(0,t_0)\times\EE(0,t_0)\to {_0\FF}(0,t_0)$ is
continuously Fr\'echet-differentiable. There is $\eta>0$ such that
for a given $z_*\in\EE(0,t_0)$ with $|\rho_0|_{C^2(\Sigma)}\leq
\eta$, there are continuous functions $\alpha(R)>0$ and
$\beta(\tau)>0$ with $\alpha(0)=\beta(0)=0$,  such that
\begin{equation*}
 \Ver\NN^\prime(\bar{z}+z_*)\Ver_{\cB({_0\EE}(0,\tau),{_0\FF}(0,\tau))}
 \leq \alpha(R)+\beta(\tau), \quad \bar{z}\in \BB_R\subset {_0\EE}(0,\tau) .
\end{equation*}
\end{proposition}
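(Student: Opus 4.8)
The plan is to prove Proposition~\ref{nonlinearities} by systematically differentiating each of the three nonlinearities $F$, $G$, $H$ given in \eqref{FGHdef} with respect to $(\bar v,\bar\rho)$ and estimating the resulting linear operators in the norm of $\cB({_0\EE}(0,\tau),{_0\FF}(0,\tau))$. The structure of the argument rests on two mechanisms. First, the coefficient functions $M_1(\rho)$, $M_2(\rho)$, $M_3(\rho)$, $M_4(\rho)$, $\beta(\rho)$, $\cH(\rho)$ and the differences $\kappa_0-\kappa(v)$, $d_0-d(v)$, $l_0-l(v)$ all vanish, or can be made small, when $\rho$ and $v-v_0$ are small in the relevant sup-norms; this is the source of the term $\alpha(R)$. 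Second — and this is the crucial point — a function in ${_0\EE}$ or ${_0\FF}$ has vanishing time trace at $t=0$, so on a short interval $(0,\tau)$ its sup-norm is controlled by $C\tau^\theta$ times its natural norm for some $\theta>0$; this, together with the uniform-in-$\tau$ embedding constants for the zero-trace spaces noted just before the proposition, yields the term $\beta(\tau)$ with $\beta(0)=0$.

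First I would fix notation and record the pointwise multiplication / embedding facts once and for all: $\EE_1(J)\hookrightarrow C(J\times\bar\Omega)\cap C(J;C^1(\bar\Omega_j))$ and $\EE_2(J)\hookrightarrow C(J;C^3(\Sigma))\cap C^1(J;C(\Sigma))$, with constants uniform in $\tau$ on the zero-trace subspaces, together with the fact that $\FF_2(J)$ and $\FF_3(J)$ are Banach algebras under pointwise multiplication since $p>n+2$, and that $C^1$-functions on $\bar\Omega_j$ (resp. $\Sigma$) act as pointwise multipliers on $\FF_1$ (resp. $\FF_2,\FF_3$). The Fréchet-differentiability of $\NN$ then follows from the chain rule once one observes that the maps $v\mapsto \kappa(v),d(v),l(v),\psi(v),\gamma(v)$ and $\rho\mapsto M_i(\rho),\beta(\rho),\cH(\rho)$ are $C^1$ (indeed $C^2$ or better) as Nemytskii/substitution operators on the relevant function spaces, using $\psi,\gamma\in C^3$, $d\in C^2$; here one uses that $v$ ranges in a fixed compact subset of $(0,\infty)$ thanks to $v_0>0$ and continuity, so the nonlinearities stay in the regime where the smoothness hypotheses apply.

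Next, term by term: in $F$, the derivative contributions are of the schematic types (coefficient)$\cdot\partial_t\bar v$, (coefficient)$\cdot\Delta\bar v$, (coefficient)$\cdot\nabla^2\bar v$, (coefficient)$\cdot\nabla\bar v$, plus the terms where the $\rho$-dependence of $M_i$ is differentiated, producing (coefficient)$\cdot(\nabla^2 v)\cdot\bar\rho$-type and $\cR$-type expressions; each coefficient is either small in $L_\infty$ when $R,\eta$ are small (giving $\alpha(R)$) or, when it is an honest multiplier that does not vanish, it multiplies a zero-trace factor whose $L_\infty$-in-time norm is $O(\tau^\theta)$ (giving $\beta(\tau)$); the target space is $\FF_1=L_p(J;L_p(\Omega))$, which makes these estimates the easiest. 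For $G$, the key cancellation is that $-([\![\psi(v)]\!]+\sigma\cH(\rho))+l_1\bar v+\sigma_0\Delta_\Sigma\bar\rho$ is, after differentiation, $-[\![\psi'(v)-\psi'(v_b)]\!]\bar v -\sigma(\cH'(\rho)-\cH'(0)/(n-1))\bar\rho$ plus higher-order-in-$\rho$ curvature terms, so the leading second-order part of $\cH(\rho)$ exactly matches the linear term $\sigma_0\Delta_\Sigma\bar\rho$ and what remains has a small coefficient; the estimate is in $\FF_2$, where one invokes the algebra property. The last term $(\gamma(v)\beta(\rho)-\gamma_1)\partial_t\bar\rho$ in the $\gamma>0$ case needs $\gamma(v)\beta(\rho)-\gamma_1$ small in $\FF_2$, which follows from $|\rho|_{C^2}\le\eta$ and closeness of $v$ to $v_0$ on $(0,\tau)$. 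For $H$, whose target is $\FF_3$ (one derivative less regular than $\FF_2$), the terms $[\![(d(v)-d_0)\partial_\nu\bar v]\!]$, $(l_0-l(v))\partial_t\bar\rho$, the $M_4$ term, and the quadratic $\gamma\beta(\partial_t\rho)^2$ term are all handled by the same small-coefficient-or-small-time dichotomy, with the crucial extra observation — already made in the proof of Theorem~\ref{linpneq0} — that $\partial_t\rho\in\FF_2\hookrightarrow\FF_3$ has \emph{higher} regularity than $\FF_3$ requires, so multiplying by the merely-$C^1$ coefficient $l_0-l(v)$ is admissible.

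The main obstacle I expect is bookkeeping rather than any single deep estimate: one must verify, uniformly in $\tau\in(0,t_0]$ and over the ball $\BB_R(0)\subset{_0\EE}(0,\tau)$, that every coefficient splits cleanly into a part whose $L_\infty$ (or $\FF_2$-, $\FF_3$-) norm is $\le\alpha(R)$ and a part that is multiplied by a zero-trace quantity, so that the uniform-in-$\tau$ constants $C_M$ are not destroyed. The two delicate points inside this are (a) keeping the Nemytskii operators $v\mapsto\kappa(v)$ etc. differentiable with $\tau$-uniform bounds on ${_0\EE}$, which requires writing $\kappa(v)-\kappa_0=\kappa(v)-\kappa(v_0)$ and Taylor-expanding with the zero-trace factor $v-v_*$ plus the $\tau$-small factor $v_*-v_0$; and (b) the curvature term $\cH(\rho)$, where one must confirm that the full nonlinear expression for $(n-1)\cH(\rho)$ — which involves $\rho$ and $\nabla_\Sigma^2\rho$ with coefficients built from $M_0(\rho)$, $\beta(\rho)$ — has derivative at $\rho$ equal to $\sigma_0\Delta_\Sigma(\cdot)$ plus terms with $C^2(\Sigma)$-small coefficients, using $\cH'(0)=( \mathrm{tr}\,L_\Sigma^2+\Delta_\Sigma)/(n-1)$ and smallness of $|\rho_0|_{C^2(\Sigma)}\le\eta$ propagated to $|\rho|_{C^2(\Sigma)}$ on $(0,\tau)$ via the embedding $\EE_2\hookrightarrow C(J;C^3(\Sigma))$. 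Once these are in place, summing the contributions over $F,G,H$ produces the claimed bound $\alpha(R)+\beta(\tau)$ with $\alpha(0)=\beta(0)=0$.
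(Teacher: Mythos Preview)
Your proposal is correct and follows exactly the approach the paper intends: the paper's own proof is a two-sentence reference to \cite[Section~7]{EPS03} (the flat-interface case) together with the remark that the new geometric terms are of the form $\tilde M(\bar v,\bar\rho)\nabla_\Sigma\bar\rho$ with $\nabla_\Sigma\bar\rho$ small and the $\gamma$-terms are of highest order but with small coefficient, and your plan is precisely a detailed term-by-term execution of that argument. The two mechanisms you isolate --- small coefficients yielding $\alpha(R)$, and zero time-trace yielding $\beta(\tau)$ via $\tau$-uniform embedding constants on ${_0\EE}$ --- are exactly those used in \cite{EPS03}.
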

\begin{proof} We may proceed similarly as in \cite[Section 7]{EPS03}, where the interface is a graph over $\R^{n-1}$.
The additional terms which arise by considering a general geometry
are either of lower order or of the form
$\tilde{M}(\bar{v},\bar{\rho})\nabla_\Sigma \bar{\rho}$ where
$\tilde{M}(\bar{v},\bar{\rho})$ is of highest order (see
(\ref{FGHdef})), but can be controlled by ensuring that
$\nabla_\Sigma \bar{\rho}$ is sufficiently small. The additional
terms due to the presence of $\gamma$ are of highest order, but
small.
\end{proof}

So choosing first $R>0$ and then $\tau>0$ small enough, $\TT$ will be a self-map and a strict contraction on $\BB_R(0)$.
Concluding, the contraction mapping principle yields a unique fixed point $\bar{z}=\bar{z}(z_*)\in \BB_R(0)\subset{_0\EE}(0,\tau)$,
hence $z=z_*+\bar{z}(z_*)$ is the unique solution of \eqref{transformed-decomposed}, i.e.\ of \eqref{transformed}.

The proof in case $\gamma>0$ is similar, employing now Theorem \ref{linpneq0}.

\begin{remark}
 The assumption $p>n+2$ simplifies many arguments since $\FF_2(J)$ as well as $\FF_3(J)$ are Banach algebras
 and $\nabla v\in BC(J\times \Omega)$. If we merely assume $p>(n+2)/2$ then $\FF_2(J)$ is still a Banach algebra, but $\FF_3(J)$ is not, and $\nabla v$ may not be bounded anymore. This leads to much more involved estimates for the nonlinearities.
\end{remark}
\noindent{\bf Local semiflows.} We denote by $\cMH^2(\Omega)$ the
closed $C^2$-hypersurfaces contained in $\Omega$. It can be shown
that $\cMH^2(\Omega)$ is a $C^2$-manifold: the charts are the
parameterizations over a given hypersurface $\Sigma$ according to
Section 2, and the tangent space consists of the normal vector
fields on $\Sigma$. We define a metric on $\cMH^2(\Omega)$ by means
of
$$d_{\cMH^2}(\Sigma_1,\Sigma_2):= d_H(\cN^2\Sigma_1,\cN^2\Sigma_2),$$
where $d_H$ denotes the Hausdorff metric on the compact subsets of $\R^n$ introduced in Section 2.
This way $\cMH^2(\Omega)$ becomes a Banach manifold of class $C^2$.

Let $d_\Sigma(x)$ denote the signed distance for $\Sigma$ as in Section 2. We may then define the {\em level function} $\varphi_\Sigma$ by means of
$$\varphi_\Sigma(x) = \phi(d_\Sigma(x)),\quad x\in\R^n,$$
where
$$\phi(s)=(1-\chi(s/a))\,{\rm sgn}\, s+ s \chi(s/a),\quad s\in \R.$$
Then it is easy to see that $\Sigma=\varphi_\Sigma^{-1}(0)$, and
$\nabla \varphi_\Sigma(x)=\nu_\Sigma(x)$, for $x\in \Sigma$.
Moreover, $0$ is an eigenvalue of $\nabla^2\varphi_\Sigma(x)$, and
the remaining eigenvalues of $\nabla^2\varphi_\Sigma(x)$ are the
principal curvatures of $\Sigma$ at $x\in\Sigma$.

If we consider the subset $\cMH^2(\Omega,r)$ of $\cMH^2(\Omega)$ which consists of all closed
hypersurfaces $\Gamma\in \cMH^2(\Omega)$ such that $\Gamma\subset \Omega$ satisfies a
(interior and exterior) ball condition with fixed radius $r>0$, then the map
\begin{equation}
\Upsilon:\cMH^2(\Omega,r)\to C^2(\bar{\Omega}),\quad
\Upsilon(\Gamma):=\varphi_\Gamma,
\end{equation}
is an isomorphism of the metric space $\cMH^2(\Omega,r)$ onto
$\Upsilon(\cMH^2(\Omega,r))\subset C^2(\bar{\Omega})$.
\medskip\\
Let $s-(n-1)/p>2$. Then we define
\begin{equation}
\label{definition-W-r}
W^s_p(\Omega,r):=\{\Gamma\in\cMH^2(\Omega,r): \varphi_\Gamma\in W^s_p(\Omega)\}.
\end{equation}
In this case the local charts for $\Gamma$ can be chosen of class
$W^s_p$ as well. A subset $A\subset W^s_p(\Omega,r)$ is said to be
(relatively) compact, if $\Upsilon(A)\subset W^s_p(\Omega)$ is
(relatively) compact.

As an ambient space for the
state manifold $\cSM_\gamma$ of the Stefan problem with surface tension we consider
the product space $C(\bar{G})\times \cMH^2$, due to continuity of temperature
and curvature.

We define the state manifolds $\cSM_\gamma$, $\gamma\geq0$, for the Stefan problem \eqref{stefan} as follows.
For $\gamma=0$ we set
\begin{eqnarray}
\label{phasemanif0}
\cSM_0:=&&\hspace{-0.5cm}\{(u,\Gamma)\in C(\bar{\Omega})\times \cMH^2:
 u\in W^{2-2/p}_p(\Omega\setminus\Gamma),\, \Gamma\in W^{4-3/p}_p,\\
&&u>0 \mbox{ in } \bar{\Omega},\; [\![\psi(u)]\!]+\sigma \cH=0,\;l(u)\neq0\mbox{ on } \Gamma,\;
[\![d\partial_\nu u]\!] \in W^{2-6/p}_p(\Gamma)\},\nonumber
\end{eqnarray}
and for $\gamma>0$
\begin{eqnarray}\label{phasemanifg}
\cSM_\gamma:=&&\hspace{-0.5cm}\{(u,\Gamma)\in C(\bar{\Omega})\times \cMH^2:
 u\in W^{2-2/p}_p(\Omega\setminus\Gamma),\, \Gamma\in W^{4-3/p}_p,\\
&&
u>0 \mbox{ in } \bar{\Omega},\;(l(u)-[\![\psi(u)]\!]-\sigma \cH)([\![\psi(u)]\!]+\sigma \cH) =\gamma(u)[\![d\partial_\nu u]\!]
\mbox{ on } \Gamma\}.\nonumber
\end{eqnarray}
Charts for these manifolds are obtained by the charts induced by $\cMH^2(\Omega)$,
followed by a Hanzawa transformation.

Applying Theorem \ref{Thm3.1} or Theorem \ref{Thm3.2}, respectively,
and re-parametrizing the interface repeatedly, we see that
(\ref{stefan}) yields a local semiflow on $\cSM_\gamma$.
\begin{theorem}
\label{semiflow}
Let $p>n+2$, $\sigma>0$ and $\gamma\geq0$.
Then problem (\ref{stefan}) generates a local semiflow
on the state manifold $\cSM_\gamma$. Each solution $(u,\Gamma)$ exists on a maximal time
interval $[0,t_*)$, where $t_*=t_*(u_0,\Gamma_0)$.
\end{theorem}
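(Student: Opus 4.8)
The plan is to deduce the semiflow property from the local well-posedness Theorems~\ref{Thm3.1} and~\ref{Thm3.2} together with the uniqueness of $L_p$-solutions, by transporting everything through the Hanzawa parameterization. First I would fix a reference hypersurface $\Sigma$ and observe that, given $(u_0,\Gamma_0)\in\cSM_\gamma$, the compatibility and regularity conditions built into the definition~\eqref{phasemanif0} (resp.~\eqref{phasemanifg}) of the state manifold are precisely those required by Theorem~\ref{Thm3.1} (resp.~Theorem~\ref{Thm3.2}); hence there is a unique $L_p$-solution $(v,\rho)$ of the transformed problem~\eqref{transformed} on a nontrivial interval $[0,\tau]$, and pushing forward by $\Xi_\rho$ yields a solution $(u,\Gamma)$ of~\eqref{stefan}. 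The embeddings recorded after Theorem~\ref{Thm3.1} show that $t\mapsto(u(t),\Gamma(t))$ is continuous into $C(\bar\Omega)\times\cMH^2$ and that, for each $t\in[0,\tau]$, the pair $(u(t),\Gamma(t))$ again lies in $\cSM_\gamma$ (the Gibbs--Thomson relation holds on $\Gamma(t)$ by the fourth equation of~\eqref{stefan} with $\gamma\equiv0$, or the quadratic relation with $\gamma>0$; the sign condition $l(u)\neq0$ persists by continuity for small times). This gives the local solution map.

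Next I would establish the two algebraic semiflow axioms. The identity $\Phi_0=\mathrm{id}$ is immediate. For the cocycle property $\Phi_{t+s}=\Phi_t\circ\Phi_s$ one argues by uniqueness: if $(u,\Gamma)$ solves~\eqref{stefan} on $[0,t_*)$ with data $(u_0,\Gamma_0)$, then for fixed $s$ the time-shifted pair $(u(s+\cdot),\Gamma(s+\cdot))$ is an $L_p$-solution with data $(u(s),\Gamma(s))\in\cSM_\gamma$, because $L_p$-regularity is preserved under translation in time and the solution on $[s,t_*)$ restricted to a short interval $[s,s+\delta]$ is, after re-parameterizing the interface over $\Sigma$, again an $L_p$-solution of the transformed problem; uniqueness in Theorems~\ref{Thm3.1}/\ref{Thm3.2} then forces it to coincide with $\Phi_\cdot(u(s),\Gamma(s))$. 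Continuous dependence of $\Phi_t$ on the initial data follows from the continuity of the solution map $(f,g,h,z_0)\mapsto z$ in Theorems~\ref{linpeq0}/\ref{linpneq0} combined with the contraction-mapping construction in part~(iv) of the proof of Theorems~\ref{Thm3.1}/\ref{Thm3.2}, which produces the fixed point $\bar z(z_*)$ depending continuously — indeed locally Lipschitz — on $z_*$. Finally, the existence of a maximal interval of existence $[0,t_*)$ is the standard continuation argument: let $t_*$ be the supremum of all $\tau$ for which an $L_p$-solution on $[0,\tau]$ exists; uniqueness glues these solutions into one solution on $[0,t_*)$, and $t_*$ depends only on $(u_0,\Gamma_0)$.

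The point requiring the most care — and the main obstacle — is the \emph{re-parameterization} step: a solution constructed on $[0,\tau]$ over the fixed reference surface $\Sigma$ need not stay in the tubular neighborhood $\cR(\Lambda)$ of $\Sigma$ up to time $t_*$, so to continue past $\tau$ one must, at an intermediate time $t_1<\tau$, choose a new real-analytic reference surface $\Sigma_1$ close to $\Gamma(t_1)$ in the $\cN^2$-sense, verify that $\Gamma(t)$ for $t$ near $t_1$ is a graph $\rho_1(t,\cdot)$ over $\Sigma_1$ of the requisite regularity, and check that $(v_1,\rho_1)$ is then an $L_p$-solution of the transformed problem relative to $\Sigma_1$ to which Theorems~\ref{Thm3.1}/\ref{Thm3.2} apply for a further time step. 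This requires knowing that the $L_p$-regularity class is invariant under such a change of chart on $\cMH^2(\Omega)$ — which is guaranteed because the charts of $\cMH^2(\Omega)$ are themselves Hanzawa parameterizations and the transition maps are smooth — and that the well-posedness and compatibility conditions transform correctly, which holds since they are geometric (expressed through $[\![\psi(u)]\!]+\sigma\cH$, $l(u)$, $[\![d\partial_\nu u]\!]$) and hence chart-independent. Granting these structural facts, the semiflow property and the existence of a maximal interval $[0,t_*)$, $t_*=t_*(u_0,\Gamma_0)$, follow.
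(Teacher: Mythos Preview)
Your proposal is correct and follows essentially the same approach as the paper. The paper's own argument is extremely terse---a single sentence before the theorem stating that one applies Theorems~\ref{Thm3.1}/\ref{Thm3.2} and re-parameterizes the interface repeatedly---and your write-up is simply a careful unpacking of what that sentence entails, including the re-parameterization step that you correctly identify as the main technical point.
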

\medskip
\noindent{\bf Time weights.} For later use we need an extension of
the local existence results to spaces with time weights. 
In particular, we need this extension for a compactness argument in the proof
of Theorem~\ref{Qual}.
Given a UMD-Banach space $Y$ and $\mu\in(1/p,1]$, we define
for $J=(0,t_0)$
$$K^s_{p,\mu}(J;Y):=\{u\in L_{p,loc}(J;Y): \; t^{1-\mu}u\in K^s_p(J;Y)\},$$
where $s\geq0$ and $K\in\{H,W\}$.
It has been shown in \cite{PrSi04} that the operator $d/dt$ in $L_{p,\mu}(J;Y)$ with domain
$$D(d/dt)={_0H}^1_{p,\mu}(J;Y)=\{u\in H^1_{p,\mu}(J;Y):\; u(0)=0\}$$
is sectorial and admits an $H^\infty$-calculus with angle $\pi/2$. However, it does not generate a $C_0$-semigroup, unless $\mu=1$. This is the main tool for extending the results for the linear problem,
i.e.\ Theorems \ref{linpeq0} and \ref{linpneq0},
to the time weighted setting, where the solution space $\EE(J)$ is replaced by
$$ \EE_\mu(J)=\EE_{\mu,1}(J)\times \EE_{\mu,2}(J),$$
with
\begin{equation*}
\begin{split}
&\EE_{\mu,1}(J)\!=\{v\in H^1_{p,\mu}(J;L_p(\Omega))\cap
L_{p,\mu}(J;H^2_p(\Omega\setminus\Sigma):
[\![v]\!]=0,\ \partial_{\nu_\Omega}v=0\},\\
&\EE_{\mu,2}(J)\!:= W^{3/2-1/2p}_{p,\mu}(J;L_p(\Sigma))\!\cap\!
W^{1-1/2p}_{p,\mu}(J;H^2_p(\Sigma))
\!\cap\! L_{p,\mu}(J;W^{4-1/p}_p(\Sigma)),\!\gamma\equiv 0 , \\
&\EE_{\mu,2}(J)\!:= W^{2-1/2p}_{p,\mu}(J;L_p(\Sigma))\cap L_{p,\mu}(J;W^{4-1/p}_p(\Sigma)),
\ \gamma>0.
\end{split}
\end{equation*}
In a similar way, the space of data is defined by
\begin{align*}
&\FF_{\mu,1}(J):=L_{p,\mu}(J; L_p(\Omega)),\\
&\FF_{\mu,2}(J) :=W^{1-1/2p}_{p,\mu}(J;L_p(\Sigma))\cap L_{p,\mu}(J;W^{2-1/p}_p(\Sigma)),\\
&\FF_{\mu,3}(J):=W^{1/2-1/2p}_{p,\mu}(J;L_p(\Sigma))\cap L_{p,\mu}(J;W^{1-1/p}_{p}(\Sigma)),\\
&\FF_\mu(J)\;\;:= \FF_{\mu,1}(J)\times\FF_{\mu,2}(J)\times \FF_{\mu,3}(J).
\end{align*}
The trace spaces for $v$ and $\rho$ for $p>3$ are then given by
\begin{align}\label{tracesp-mu}
v_0\in W^{2\mu-2/p}_p(\Omega\setminus\Sigma),\quad \rho_0\in W^{2+2\mu-3/p}_p(\Sigma), \quad \rho_1\in W^{4\mu-2-6/p}_p(\Sigma),
\end{align}
where for the last trace - which is of relevance only in case
$\gamma\equiv0$ - we need in addition $\mu>1/2+3/2p$. Note that the embeddings
\begin{equation*}
\EE_{\mu,1}(J)\hookrightarrow C(J\times \bar{\Omega})\cap C(J;C^1(\bar{\Omega}_j)),\quad \EE_{\mu,2}(J)\hookrightarrow C(J;C^3(\Sigma))
\end{equation*}
require
$\mu>1/2+(n+2)/2p$, which is feasible since $p>n+2$ by assumption.
This restriction is needed for the estimation of the nonlinearities, i.e.\
Proposition~\ref{nonlinearities} remains valid for $\mu\in(1/2+(n+2)/2p,1)$.

The assertions for the linear problem remain valid for such $\mu$, replacing $\EE(J)$ by $\EE_\mu(J)$, $\FF(J)$ by $\FF_\mu(J)$, for initial data
subject to \eqref{tracesp-mu}. This relies on the fact mentioned above that $d/dt$ admits a bounded $H^\infty$-calculus
with angle $\pi/2$ in the spaces $L_{p,\mu}(J;Y)$.
Therefore the main results in Denk, Pr\"uss and Zacher \cite{DPZ08} remain valid for $\mu\in(1/p,1)$. This has recently been established in \cite{Mey10, MeyS11}.
As a consequence of these considerations we have the following result.
\goodbreak
\begin{corollary}
\label{wellposed3} Let $p>n+2$, $\mu\in (1/2+(n+2)/2p,1]$,
$\sigma>0$, and suppose that $\psi,\gamma\in C^3(0,\infty)$, $d\in
C^2(0,\infty)$ such that $\gamma\equiv 0$ or $\gamma(u)>0$, $u\in
(0,\infty)$, and
$$\kappa(u)=-u\psi^{\prime\prime}(u)>0,\quad d(u)>0,\quad u\in(0,\infty).$$
Assume the {\em regularity conditions}
$$u_0\in W^{2\mu-2/p}_p(\Omega\setminus\Gamma_0)\cap C(\bar{\Omega}),\quad u_0>0,\quad \Gamma_0\in W^{2+2\mu-3/p}_p,$$
and the {\em  compatibility conditions} $\displaystyle \partial_{\nu_\Omega}u_0=0$ and
\begin{itemize}
\item[(a)]
$\displaystyle
[\![\psi(u_0)]\!]+\sigma \cH(\Gamma_0)=0,\ [\![d(u_0)\partial_\nu u_0]\!]\in W^{4\mu-2-6/p}_p(\Gamma_0),$
as well as  the {\em well-posedness condition} $\ l(u_0)\neq0$ on $\Gamma_0$, in case $\gamma\equiv0$.
\vspace{2mm}
\item[(b)]
$\displaystyle
\big([\![\psi(u_0)]\!]+\sigma \cH(\Gamma_0)\big)\big(l(u_0)-[\![\psi(u_0)]\!]-\sigma \cH(\Gamma_0)\big)=\gamma(u_0)[\![d(u_0)\partial_\nu u_0]\!]$
in case $\gamma>0$,
\end{itemize}
Then the transformed problem \eqref{transformed} admits a unique
solution $z=(v,\rho)\in \EE_\mu(0,\tau)$ for some nontrivial time
interval $J=[0,\tau]$. The solution depends continuously on the
data. For each $\delta>0$ the solution belongs to
$\EE(\delta,\tau)$, i.e. regularizes instantly.
\end{corollary}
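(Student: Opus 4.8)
The plan is to rerun the fixed-point argument from the proof of Theorems~\ref{wellposed1} and \ref{wellposed2}, now carried out in the time-weighted spaces $\EE_\mu(J)$ and $\FF_\mu(J)$. The first step is to upgrade the linear theory. Since $d/dt$ on $L_{p,\mu}(J;Y)$ with domain ${_0H}^1_{p,\mu}(J;Y)$ admits a bounded $H^\infty$-calculus of angle $\pi/2$ by \cite{PrSi04}, the operator-valued Fourier-multiplier arguments of \cite{DPZ08} go through verbatim for every $\mu\in(1/p,1)$ --- this is precisely what is established in \cite{Mey10} --- so the weighted analogues of Theorems~\ref{linpeq0} and \ref{linpneq0} hold, with $\EE,\FF$ replaced by $\EE_\mu,\FF_\mu$, the trace spaces given by \eqref{tracesp-mu}, and the compatibility conditions adjusted accordingly; in particular, maximal $L_{p,\mu}$-regularity of the Laplace--Beltrami operator $\Delta_\Sigma$ on $\Sigma$ persists. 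In particular the principal linear operator $\LL$ is an isomorphism from ${_0\EE_\mu}(0,t_0)$ onto ${_0\FF_\mu}(0,t_0)$, with inverse bounded uniformly in $\tau\in(0,t_0)$ on the corresponding zero-time-trace subspaces.

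Next I would reduce to vanishing initial data exactly as in the reduction step of the unweighted proof: solve the linear problem \eqref{prlin} with $f=0$, $g=e^{\Delta_\Sigma t}G(v_0,\rho_0)$, and $h=e^{\Delta_\Sigma t}\rho_1$ where $\rho_1=H(v_0,\rho_0)$; the regularity of $G(v_0,\rho_0)$ and $\rho_1$ dictated by \eqref{tracesp-mu} gives $g\in\FF_{\mu,2}(J)$, $h\in\FF_{\mu,3}(J)$, and the compatibility conditions yield $[\![d_0\partial_\nu v_0]\!]+\rho_1\in W^{4\mu-2-6/p}_p(\Sigma)$ in the case $\gamma\equiv0$, so that the reference solution $z_*=(v_*,\rho_*)\in\EE_\mu(J)$ exists. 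Subtracting $z_*$ produces the nonlinear problem \eqref{trans-decomp-red} for $\bar z$ with zero time trace, which is recast as the fixed-point equation $\bar z=\LL^{-1}\NN(\bar z,z_*)=:\TT(\bar z)$ on a ball $\BB_R(0)\subset{_0\EE_\mu}(0,\tau)$. The self-map and strict-contraction estimates are supplied by \propref{nonlinearities}, which --- as noted just before the statement --- remains valid for $\mu\in(1/2+(n+2)/2p,1)$; the restriction on $\mu$ is exactly what guarantees the embeddings $\EE_{\mu,1}(J)\hookrightarrow C(J\times\bar\Omega)\cap C(J;C^1(\bar\Omega_j))$ and $\EE_{\mu,2}(J)\hookrightarrow C(J;C^3(\Sigma))$, and that $\FF_{\mu,2}(J)$ (together with $\FF_{\mu,3}(J)$) retains the pointwise-multiplier and Banach-algebra properties entering the nonlinearity bounds. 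Choosing first $R$ small and then $\tau=\tau(R)$ small makes $\TT$ a contractive self-map of $\BB_R(0)$, so the contraction mapping principle yields the unique solution $z=z_*+\bar z(z_*)\in\EE_\mu(0,\tau)$; the uniform Lipschitz dependence of $\TT$ on $z_*$ gives continuous dependence on the data, as in the unweighted case.

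The instantaneous regularization is then immediate: on any subinterval $[\delta,\tau]$ with $\delta>0$ the weight $t^{1-\mu}$ and its derivative are bounded above and below, so $\EE_\mu(\delta,\tau)=\EE(\delta,\tau)$ and $\FF_\mu(\delta,\tau)=\FF(\delta,\tau)$ with equivalent norms, whence $z|_{[\delta,\tau]}$ lies in the unweighted class $\EE(\delta,\tau)$; alternatively one may simply restart the unweighted local existence theorem from time $\delta$, since $(v(\delta),\rho(\delta))$ already belongs to the stronger trace space. The only genuinely substantive input is the weighted linear theory, which we import from \cite{Mey10}; once that is available the argument is a transcription of the proof of Theorems~\ref{wellposed1}--\ref{wellposed2} into weighted norms. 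I expect the most delicate part of the write-up to be the bookkeeping that matches the value of $\mu$ with the trace spaces \eqref{tracesp-mu} and the compatibility conditions (a)/(b): the extra requirement $\mu>1/2+3/2p$ is needed only when $\gamma\equiv0$, so that the trace $\rho_1\in W^{4\mu-2-6/p}_p(\Sigma)$ is meaningful, and one must check that condition (a) indeed forces $[\![d(u_0)\partial_\nu u_0]\!]+\rho_1$ into that space --- but this is the verbatim analogue of the corresponding step for $\mu=1$ and introduces no new difficulty.
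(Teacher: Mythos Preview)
Your proposal is correct and follows essentially the same approach as the paper: the paper does not give a separate formal proof of the corollary but presents it as a direct consequence of the discussion preceding it, namely that the weighted linear theory goes through by \cite{PrSi04} and \cite{Mey10}, that Proposition~\ref{nonlinearities} remains valid for $\mu\in(1/2+(n+2)/2p,1)$ because the needed embeddings persist, and hence the fixed-point argument from Theorems~\ref{wellposed1}--\ref{wellposed2} transcribes verbatim to $\EE_\mu$, $\FF_\mu$. Your write-up simply makes these steps explicit, and your regularization argument (bounded weight on $[\delta,\tau]$) is the intended one.
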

\section{Equilibria}
Suppose $(u_*,\Gamma_*)$ is an equilibrium for \eqref{stefan}.
Then $\partial_t u_* \equiv0$ as well as $V_*\equiv0$, and we obtain
\begin{equation}
\label{equilibria}
\left\{\begin{aligned}
{\rm div}(d(u_*)\nabla u_*)&=0 &&\text{in}&& \Omega\setminus\Gamma_*\\
 \partial_{\nu_\Omega} u_*&=0  &&\text{on}&& \partial \Omega\\
[\![u_\ast]\!] &=0 &&\text{on}&&  \Gamma_*\\
[\![\psi(u_*)]\!]+\sigma \cH(\Gamma_*)&=0  &&\text{on}&& \Gamma_*\\
[\![d(u_*)\partial_\nu u_* ]\!]&=0 &&\text{on}&& \Gamma_*.\\
\end{aligned}\right.
\end{equation}
This yields
$u_*=const$, hence $\cH(\Gamma_*)=-[\![\psi(u_*)]\!]/\sigma$ is constant as well.
If $\Gamma_*$ is connected (and $\Omega$ is bounded) this implies that
$\Gamma_*$ is a sphere $S_{R_*}(x_0)$ with radius $ R_*=\sigma/[\![\psi(u_*)]\!]$.
Thus there is an $(n+1)$-parameter family of equilibria
$$\cE:=\{(u_*,S_{R_*}(x_0)):\, u_*>0,\,0< R_*=\sigma/[\![\psi(u_*)]\!],\; \bar{B}_{R_*}(x_0)\subset \Omega\}.$$
Otherwise, $\Gamma_*$ is the union of finitely many, say $m$,
nonintersecting spheres of equal radius. It will be shown in the
proof of Theorem~\ref{lin-stability}(vii) that $\cE$  is a
$C^1$-manifold of dimension $(mn+1)$ in
$W^2_p(\Omega\setminus\Gamma_\ast)\times W^{4-1/p}_p(\Gamma_\ast)$.
\medskip
\subsection{Conservation of Energy}
As we have just seen, the equilibria of \eqref{stefan} are constant temperature, and the dispersed phase consists of finitely many non-intersecting balls with the same radius.
To determine $u$ and $R$, taking into account conservation of energy, we have to solve the system
\begin{equation}
\begin{split}
{\sf E}(u,R):= |\Omega_1| \epsilon_1(u) +|\Omega_2| \epsilon_2(u) + \frac{\sigma}{n-1}|\Sigma|&={\sf E}_0,\nonumber\\
[\![\psi(u)]\!]+\sigma \cH&=0.\nonumber
\end{split}
\end{equation}
In order not to overburden the notation, we use $(u,R)$ instead of
$(u_\ast,R_\ast)$. The constant ${\sf E}_0$ means the initial total
energy in the system. Since $\cH=-\sigma/R$ we may eliminate $R$ by
the second equation $R=\sigma/[\![\psi(u)]\!]$,
 and we are left with a single equation for the temperature $u$:
\begin{equation} \label{eq-rel}
\varphi(u):={\sf E}(u,R(u))= |\Omega|\epsilon_2(u)
-\frac{m\omega_n}{n} R^n(u)[\![\epsilon(u)]\!] + \frac{\sigma m
\omega_n}{n-1} R^{n-1}(u)=\varphi_0,
\end{equation}
with $\varphi_0={\sf E}_0$.
Note that only the temperature range $[\![\psi(u)]\!]>0$ is relevant
due to the requirement $R>0$, and with
$$R_m=\sup\{R>0:\, \Omega \mbox{ contains } m \mbox{ disjoint balls of radius } R\}$$
we must also have $R<R_m$, i.e. with  $h(u) = [\![\psi(u)]\!]$
$$h(u)>  \frac{\sigma}{ R_m}.$$
With $\epsilon(u)=\psi(u)-u \psi^\prime(u)$, i.e.\ $[\![\epsilon(u)]\!]=h(u)-uh^\prime(u)$,  we may rewrite $\varphi(u)$ as
$$ \varphi(u) =|\Omega|\epsilon_2(u) + c_n \Big( \frac{1}{h(u)^{n-1}} +(n-1)u\frac{h^\prime(u)}{h(u)^n}\Big),$$
where we have set $c_n = m\frac{\omega_n}{n(n-1)}\sigma^n.$

Next, with
$$ R^\prime(u) = - \frac{\sigma h^\prime(u)}{ h^2(u)} = - \frac{ h^\prime(u) R^2(u)}{\sigma}$$
we obtain
\begin{align*}
\varphi^\prime(u)&= |\Omega|\epsilon^\prime_2(u) -[\![\epsilon^\prime(u)]\!]|\Omega_1| + m\omega_n(\frac{\sigma}{R(u)} -[\![\epsilon(u)]\!])R^{n-1}(u)R^{\prime}(u)\\
&=|\Omega|\kappa_2(u) -[\![\kappa(u)]\!]|\Omega_1|+ m\omega_n u h^\prime(u) R^{n-1}(u) R^\prime(u)\\
&= (\kappa(u)|1)_{L_2(\Omega)} -uh^\prime(u)|\Sigma| \frac{ h^\prime(u) R^2(u)}{\sigma}\\
&= \left\{\frac{\sigma u(\kappa(u)|1)_{L_2(\Omega)}}{l^2(u)R^2(u)|\Sigma|}-1\right\}
\frac{l^2(u)R^2(u)|\Sigma|}{\sigma u},
\end{align*}
with $l(u)=u h^\prime(u)$. It will turn out that in the case of
connected phases the term in the parentheses determines whether an
equilibrium is stable: it is stable if $\varphi^\prime(u)<0$ and
unstable if $\varphi^\prime(u)>0$; see Theorem \ref{stability}
below.

In general it is not a simple task to analyze the equation for the temperature
$$\varphi(u)= |\Omega|\epsilon_2(u) + c_n \Big( \frac{1}{h(u)^{n-1}} +(n-1)u\frac{h^\prime(u)}{h(u)^n}\Big)=\varphi_0,$$
unless more properties of the functions $\epsilon_2(u)$ and in particular of $h(u)$ are known. A natural assumption is that $h$ has exactly one positive zero $u_m>0$, the melting temperature.  Therefore we look at two examples.
\begin{example}
Suppose that the heat capacities are identical, i.e.\ $[\![\kappa]\!]\equiv 0$.
This implies
$$ u h^{\prime\prime}(u)= u[\![\psi^{\prime\prime}(u)]\!]= -[\![\kappa(u)]\!]\equiv 0,$$
which means that $h(u) = h_0 + h_1u$ is linear. The melting temperature then is $0<u_m=-h_0/h_1$, hence we have two cases.

\medskip

\noindent
{\bf Case 1.} \, $h_0<0$, $h_1>0$; this means $l(u_m)>0$.\\
Then the relevant temperature range is $u>u_m$, as $h$ is positive there.
We assume now that $\epsilon_2$ is increasing and convex.
 As $u\to u_m+$ we have $h(u)\to0$, hence $\varphi(u)\to\infty$, and also
 $\varphi(u)\to \infty $ for $u\to \infty$ since $\epsilon_2(u)$ is increasing and convex. Further, we have
\begin{align*}
\varphi^\prime(u)&=|\Omega|\epsilon^\prime_2(u) -n(n-1)c_n \frac{h_1^2u}{(h_0+h_1u)^{n+1}},\\
\varphi^{\prime\prime}(u)&=|\Omega|\epsilon^{\prime\prime}_2(u) +n(n-1)c_n h_1^2
\frac{-h_0+ n h_1u}{(h_0+h_1u)^{n+2}}>0,
\end{align*}
which shows that $\varphi(u)$ is strictly convex for $u>u_m$. Thus $\varphi(u)$ has a unique minimum $u_0>u_m$,
$\varphi(u)$ is decreasing for $u_m<u<u_0$ and increasing for $u>u_0$. Thus there are precisely two equilibrium
temperatures $u_*^+\in(u_0,\infty)$ and $u_*^-\in(u_m,u_0)$ provided
$\varphi_0> \varphi(u_0)$ and none if $\varphi_0<\varphi(u_0)$. The smaller temperature leads to stable equilibria while the larger to unstable ones.

\medskip

\noindent
{\bf Case 2.} \, $h_0>0$, $h_1<0$; this means $l(u_m)<0$.\\
Then the relevant temperature range is $u<u_m$ as $h$ is positive
there. As $u\to u_m-$ we have $h(u)\to0^+$ hence
$\varphi(u)\to-\infty$, and as $u\to 0^+$ we have
$\varphi(u)\to\varphi(0)= |\Omega|\epsilon_2(0) + c_n/h_0^{n-1}$,
assuming that $\epsilon_2(0):=\lim_{u\to 0^+}\epsilon_2(u)$ exists.
Further, for $u$ sufficiently close to zero $\varphi^\prime(u)$ is
positive, since $\kappa_2=\epsilon_2'>0$, and
$\varphi^\prime(u)\to-\infty$ as $u\to u_m-$. Therefore
$\varphi^\prime(u)$ admits at least one zero in $(0,u_m)$. But there
may be more than one unless $\epsilon_2(u)$ is concave, so let us
assume this. Let $u_0\in(0,u_m)$ denote the absolute maximum of
$\varphi(u)$ in $(0,u_m)$. Then there is exactly one equilibrium
temperature $u_*\in(u_0,u_m)$ if $\varphi_0<\varphi(0)$ and it is
stable; there are exactly two equilibria $u_*^-\in(0,u_0)$ and
$u_*^+\in(u_0,u_m)$ if $\varphi(0)<\varphi_0<\varphi(u_0)$, the
first one is unstable, the second is stable. If
$\varphi_0>\varphi(u_0)$, there are no equilibria.

\medskip

\noindent
Note that in both cases these equilibrium temperatures give rise to equilibria only if
the corresponding radius $R(u)$ is smaller than $R_m$.
\end{example}
\begin{example}
Suppose that the internal energies $\epsilon_i(u)$ are linearly
increasing, i.e.
$$ \epsilon_i(u)= a_i +\kappa_i u,\quad i=1,2,$$
where $\kappa_i>0$, and now $[\![\kappa]\!]\neq0$. The identity $\epsilon_i = \psi_i-u \psi_i^\prime$ then leads to
$$\psi_i(u) = a_i+b_iu-\kappa_i u \ln u,\quad i=1,2,$$
where the constants $b_i$ are arbitrary.
This yields with $\alpha=[\![a]\!]$, $\beta=[\![b]\!]$ and $\delta =[\![\kappa]\!]$
$$ h(u)= \alpha +\beta u -\delta u \ln u.$$
Scaling the temperature by $u=u_0w$ with $\beta-\delta\ln u_0=0$ and scaling $h$ we may assume $\beta=0$ and $\delta=\pm 1$.
Then we have to investigate the equation $\varphi(w)=\varphi_1$, where
$$\varphi(w)= cw +\{\frac{1}{h^{n-1}(w)}+(n-1)w\frac{h^\prime(w)}{h^n(w)}\},\quad h(w)= \pm(\alpha+w\ln  w),$$
with $c>0$ and $\alpha,\varphi_1\in\R$. The requirement of existence of a melting temperature $w_m>0$, i.e.\ a zero of $h(w)$ leads to the restriction $\alpha\leq 1/e$.

Actually, the requirement that the melting temperature is unique, i.e.\ that $h$ has exactly one positive zero implies $\alpha<0$. Indeed, for $\alpha\in(0,1/e)$ there is a second zero $w_->0$ of $h$, and $h$ is positive in $(0,w_-)$. Equilibrium temperatures in this range
would not make sense physically.

Also here we have to distinguish
between two cases, namely that of a plus-sign where the relevant temperature range is
$w>w_m$ and in case of a minus-sign it is $(0,w_m)$. Note that $h$ is convex in the first, and concave in the second case.
\medskip\\
\noindent
{\bf Case 1:}\, For the derivatives we get in the first case
\begin{align*}
\varphi^\prime(w)&=c+(n-1)\left\{\frac{h(w)-nw (h^\prime(w))^2}{h^{n+1}(w)}\right\},\\
\varphi^{\prime\prime}(w)&=n(n-1)\frac{h^\prime(w)}{h^{n+2}(w)}\Big\{(n+1)w (h^\prime(w))^2-h(w)(3+h^\prime(w))\Big\}.
\end{align*}
We have $\varphi(w)\to\infty$ for $w\to\infty$ and for $w\to w_m+$, hence $\varphi(w)$ has a global minimum $u_0$ in $(u_m,\infty)$. Further, $\varphi^{\prime\prime}(w)>0$ in $(w_m,\infty)$, hence  the minimum is unique and there are precisely two equilibrium temperatures $w_*^-\in(w_m,w_0)$ and $w_*^+\in(w_0,\infty)$, provided $\varphi_1 >\varphi(w_0)$, the first one is stable, the second unstable.

To prove convexity of $\varphi$ we write
$$(n+1)w (h^\prime(w))^2-h(w)(3+h^\prime(w))=(n-1)w (h^\prime(w))^2+f(w),$$
where
$$f(w)=2w (h^\prime(w))^2-h(w)(3+h^\prime(w))= 2w(1+\ln w)^2 - (\alpha+w\ln w)(4+\ln w).$$
We then have $f(w_m)=2w_m(1+\ln w_m)^2>0$, and
$$f^\prime(w)=(1+\ln w)^2 +1 -\alpha/w> 1-\alpha/w\geq0,$$
for $\alpha\leq 1/e<w_m\leq w$. Let us illustrate the sign in $h$
with the water-ice system, ignoring the density jump of water at
freezing temperature. So suppose that $\Omega_2$ consists of ice and
$\Omega_1$ of water. In this case we have $\kappa_1>\kappa_2$ hence
$\delta<0$ which implies the plus-sign for $h$. Here we obtain
$w_*^\pm>w_m$, i.e.\ the ice is overheated. Equilibria only exist if
$\phi_1$ is large enough, which means that there is enough energy in
the system. If the energy in the system is very large then the
stable equilibrium temperature $w_*^-$ comes close to the melting
temperature $w_m$ and then $R(w)$ will become large, eventually
larger than $R^*$. This excludes equilibria in $\Omega$, the
physical interpretation is that everything will eventually melt.  On
the other hand, if $\Omega_1$ consists of ice and $\Omega_2$ of
water, we have the minus sign, which we want to consider next. Here
we expect under-cooling of the water-phase, existence of equilibria
only for low values of energy, and if the energy in the system is
too small everything will freeze.
\medskip

\noindent {\bf Case 2:} \, Assume the minus-sign for $h$ and let
$\alpha<0$. Then the relevant temperature range is $(0,w_m)$. Here
we have $\varphi(w)\to -\infty$ as $w\to w_m-$ and $\varphi(w)\to
1/|\alpha|^{n-1}>0$ as $w\to 0^+$.

To investigate concavity of $\varphi$ in the interval $(0,w_m)$, we recompute the derivatives of $\varphi$.
\begin{align*}
\varphi^\prime(w)&= c -(n-1) \Big\{\frac{1}{h^n(w)}+ n\frac{w (h^\prime(w))^2}{h^{n+1}(w)}\Big\},\\
\varphi^{\prime\prime}(w)&=n(n-1)\frac{h^\prime(w)}{h^{n+2}(w)}\Big\{(n+1)w (h^\prime(w))^2+h(w)(3-h^\prime(w))\Big\}.
\end{align*}
Setting $w_+=1/e$, for $w\in(w_+,w_m)$ we have $h(w)>0$ and $h^\prime(w)<0$ hence $\varphi^{\prime\prime}(w)<0$. On the other hand, for $w\in(0,w_+)$, both  $h(w)$ and $h^\prime(w)$ are positive. Then we rewrite
$$(n+1)w (h^\prime(w))^2+3h(w)-h(w)h^\prime(w)=(n-1)w(1+\ln w)^2+f(w),$$
where
\begin{align*}
f(w)&=2w (h^\prime(w))^2+h(w)(3-h^\prime(w))\\
&= 2w(1+\ln w)^2 - (\alpha+w\ln w)(4+\ln w),\\
f^\prime(w)&=(1+\ln w)^2 +1-\alpha/w>0,
\end{align*}
provided $\alpha\leq0$. This shows that $f$ is increasing,
$f(w)\to-\infty$ as $w\to 0^+$, and $f(1/e^3)=11/e^3-\alpha>0$. On the
other hand, the function $w(1+\ln w)^2$ is increasing in
$(0,1/e^3)$, hence $\varphi^{\prime\prime}(w)$ has a unique zero
$w_- \in (0,1/e^3)$. Therefore $\varphi$ is concave in $(0,w_-)\cup
(w_+,w_m)$ and convex in $(w_-,w_+)$, and $\varphi^\prime$ has a
minimum at $w_-$ and a maximum at $w_+$. Observe that
$\varphi^\prime(w)<c$, $\varphi^\prime(w)\to -\infty$ for $w\to
w_m-$ and $\varphi^\prime(0)= c-
(n-1)/|\alpha|^n<\varphi^\prime(w_+)$. Therefore,  $\varphi^\prime$
may have no, one, two, or three zeros in $(0,w_m)$ depending on the
value of $c>0$. However, if $c>0$ is large enough then
$\varphi^\prime$ has only one zero $w_1$ which lies in $(w_+,w_m)$.
In this case $\varphi$ is increasing in $(0,w_1)$ and decreasing in
$(w_1,w_m)$, hence for $\varphi_1\in(\varphi(0),\varphi(w_1))$ there
are precisely two equilibrium temperatures, the smaller leads to
unstable, the larger to a stable equilibrium. If
$\varphi_1<\varphi(0)$ there is a unique equilibrium which is
stable, and in case $\varphi_1>\varphi(w_1)$ there is none. However,
in general there may be up to four equilibrium temperatures.
\end{example}
\subsection{Linearization at equilibria}
The linearization at an equilibrium $(u_*,\Gamma_*)$ with
$R_*=\sigma/[\![\psi(u_*)]\!]$, reads
\begin{equation}
\label{lin}
\left\{\begin{aligned}
\kappa_*\partial_tv-d_*\Delta v&=f &&\text{in} && \Omega\setminus\Gamma_*\\
\partial_{\nu_\Omega} v&=0 &&\text{on} && \partial \Omega\\
[\![v]\!]&=0 &&\text{on}&& \Gamma_*\\
(l_*/u_*) v + \sigma A_* \rho -\gamma_* \partial_t\rho&=g &&\text{on}&& \Gamma_*\\
l_*\partial_t \rho -[\![d_*\partial_\nu v]\!]&=h &&\text{in}&& \Gamma_*\\
v(0)=v_0,\ \rho(0)&=\rho_0.&& &&
\end{aligned}\right.
\end{equation}
Here
$$
\kappa_*=\kappa(u_*),\quad d_*=d(u_*), \quad l_*=l(u_*),\quad \gamma_\ast=\gamma(u_\ast),
\quad A_*=\frac{1}{n-1}(\frac{n-1}{R_*^2}+\Delta_*),$$
where $\Delta_*$
denotes the {\em Laplace-Beltrami} operator on $\Gamma_*$.
\medskip\\
We note that if $l_*=0$ and $\gamma_*=0$ then the problem is not
well-posed. On the other hand, if $l_*\neq0$ and $\gamma_*=0$, then
the operator $-L_0$ defined by
\begin{equation}
\label{L-0}
\begin{split}
D(L_0)&=\big\{ (v,\rho)\in [H^2_p(\Omega\setminus\Gamma_*)\cap
C(\bar\Omega)]
\times W^{4-1/p}_p(\Gamma_*): \\
 &\hspace{7mm}\partial_{\nu_\Omega}v=0,\; (l_*/u_*)v+\sigma A_* \rho=0,\;
[\![d_*\partial_\nu v]\!]\in W^{2-2/p}_p(\Gamma_*)\big\},\\
L_0(u,\rho)&= ((-d_*/\kappa_*)\Delta v, -[\![(d_*/l_*)\partial_\nu v]\!]),\\
\end{split}
\end{equation}
generates an analytic $C_0$-semigroup with maximal regularity in
$$X_0:= L_p(\Omega)\times W^{ 2-2/p}_p(\Gamma_*).$$
More precisely, we have the following result.
\begin{theorem}
\label{MR-0}
 Let $3<p<\infty$, $\sigma>0$, suppose $\gamma_*=0$ and let
$l_*\neq0$. Then for each finite interval $J=[0,t_0]$, there is a unique solution $z=(v,\rho)\in \EE(J)$
of (\ref{lin}) if and only if the data $(f,g,h)$ and $z_0=(v_0,\rho_0)$ satisfy
$$(f,g,h)\in\FF(J),\quad
z_0\in [W^{2-2/p}_p(\Omega\setminus\Gamma_*)\cap C(\bar{\Omega})]\times W^{4-3/p}_p(\Gamma_*)$$
and the compatibility conditions
$$\partial_{\nu_\Omega}v_0=0,\quad (l_*/u_*)v_0+\sigma A_* \rho_0=g(0),\quad
h(0)+[\![d_*\partial_\nu v_0]\!]
\in W^{2-6/p}_p(\Gamma_*).$$
The operator $-L_0$ defined above generates an analytic $C_0$-semigroup in $X_0$ with
maximal regularity of type $L_p$.
\end{theorem}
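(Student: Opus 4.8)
The plan is to obtain the maximal regularity assertion for \eqref{lin} as a lower-order perturbation of \thmref{linpeq0}, and the generator property by running the localization / Fourier-multiplier analysis underlying \thmref{linpeq0} directly in the base space $X_0$ (equivalently, by invoking the standard principle that $L_p$-maximal regularity forces the underlying operator to generate an analytic semigroup).

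At an equilibrium all coefficients in \eqref{lin} are constant: $\kappa_0 = \kappa_* > 0$, $d_0 = d_* > 0$ in each phase, $l_0 = l_* \ne 0$, $l_1 = l_*/u_*$, so $l_0 l_1 = l_*^2/u_* > 0$ on $J \times \Gamma_*$, and $\Gamma_*$ — a sphere, or a finite union of spheres — is smooth; hence all hypotheses of \thmref{linpeq0} are met. The sole structural difference between \eqref{lin} and the problem \eqref{prlin} treated there is that the Gibbs--Thomson relation now reads $(l_*/u_*)v + \sigma A_*\rho = g$ with $\sigma A_*\rho = \sigma_0\Delta_*\rho + (\sigma/R_*^2)\rho$; that is, \eqref{lin} is \eqref{prlin} (with the above coefficients) plus the zeroth-order term $\rho \mapsto (\sigma/R_*^2)\rho$ added to the left-hand side of the fourth equation. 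Since $\EE_2(J) \hookrightarrow \FF_2(J)$ (from the defining intersections, e.g.\ $W^{1-1/2p}_p(J;H^2_p(\Gamma_*)) \hookrightarrow W^{1-1/2p}_p(J;L_p(\Gamma_*))$ and $L_p(J;W^{4-1/p}_p(\Gamma_*)) \hookrightarrow L_p(J;W^{2-1/p}_p(\Gamma_*))$), this term is a bounded lower-order perturbation, and on the spaces with vanishing time traces its operator norm ${_0\EE_2}(0,\tau) \to {_0\FF_2}(0,\tau)$ tends to $0$ as $\tau \to 0$ (using $\rho(0) = \partial_t\rho(0) = 0$, available since $p > 3$, together with interpolation between $L_p(0,\tau;L_p(\Gamma_*))$ and $L_p(0,\tau;W^{4-1/p}_p(\Gamma_*))$). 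Consequently, moving the term to the right-hand side, a contraction argument based on \thmref{linpeq0} yields a unique solution of \eqref{lin} in $\EE([0,\tau])$ for $\tau$ small, which is then continued over $[0,t_0]$ in finitely many steps; at each restart the compatibility conditions propagate because the equations hold pointwise in $t$ and $\partial_t\rho$ obeys \eqref{specialembedding}. Necessity of the data regularity and of the compatibility conditions — including the hidden condition $h(0)+[\![d_*\partial_\nu v_0]\!] \in W^{2-6/p}_p(\Gamma_*)$, forced by $\partial_t\rho(0) = (h(0)+[\![d_*\partial_\nu v_0]\!])/l_*$ and \eqref{specialembedding} — is read off from trace theory exactly as in \thmref{linpeq0} and the remark after it, and continuity of the solution map is then automatic.

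For the generator statement I would first check that $L_0$ from \eqref{L-0} is closed in $X_0 = L_p(\Omega)\times W^{2-2/p}_p(\Gamma_*)$ (its graph norm is equivalent to the $[H^2_p(\Omega\setminus\Gamma_*)\cap C(\bar\Omega)]\times W^{4-1/p}_p(\Gamma_*)$-norm on $D(L_0)$, by elliptic transmission estimates) and densely defined (given $(v,\rho)\in X_0$, approximate $\rho$ by smooth height functions $\rho_n$, let the first component carry the Dirichlet trace $-(u_*\sigma/l_*)A_*\rho_n \in W^{2-1/p}_p(\Gamma_*)$ dictated by the constraint, realize it by an $H^2_p$-function with vanishing normal-derivative jump supported in a tubular neighbourhood of $\Gamma_*$ of width $\delta_n \downarrow 0$ chosen small enough that this corrector is $o(1)$ in $L_p(\Omega)$, and add a $C_c^\infty(\Omega\setminus\Gamma_*)$-approximation of $v$). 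Then the localization and Fourier-multiplier scheme of \cite{DPZ08} underpinning \thmref{linpeq0}, carried out in the scale built on $X_0$ — the Lopatinskii--Shapiro condition amounting to invertibility of the boundary symbol $s(\lambda,\xi)$ of the remark after \thmref{linpneq0} for large $|\lambda|$, here with $\gamma = 0$ and $l = l_* \ne 0$ — shows that $L_0$ is $\cR$-sectorial of angle less than $\pi/2$ in $X_0$, equivalently has maximal $L_p$-regularity there; by the standard characterization this means $-L_0$ generates an analytic $C_0$-semigroup on $X_0$ with maximal regularity of type $L_p$. (Alternatively, having closedness, dense domain, and maximal $L_p$-regularity of the abstract Cauchy problem $\dot z + L_0 z = G$ on a bounded interval, one may invoke Dore's theorem directly.)

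The step I expect to be the main obstacle is not any single estimate but the careful bookkeeping across the several function-space scales involved — the mixed-regularity solution class $\EE(J)$ and data class $\FF(J)$ appropriate for the Stefan PDE on one hand, and the abstract scale $X_0 \supset D(L_0)$ appropriate for the semigroup and for the later linearized-stability analysis on the other — so that the results of \thmref{linpeq0} and of \cite{DPZ08} can be transferred to each setting; everything else is either a direct consequence of \thmref{linpeq0} or a lower-order perturbation controlled by the hypothesis $p > n+2$.
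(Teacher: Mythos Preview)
Your proposal is correct and follows essentially the same route as the paper: the maximal regularity assertion is obtained from \thmref{linpeq0} (the paper simply calls it a ``special case,'' absorbing the zeroth-order term $(\sigma/R_*^2)\rho$ into the variable-coefficient framework rather than isolating it as you do, but your explicit perturbation argument is equally valid), and the generator statement is deduced from maximal $L_p$-regularity via the abstract principle you mention --- the paper cites \cite[Proposition~1.2]{Pru03}, which is exactly your Dore-type argument, so your density/closedness verification and the alternative of rerunning the \cite{DPZ08} analysis in $X_0$ are correct but more than what is needed.
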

\noindent
In case $\gamma_*>0$, similar assertions are valid for $L_\gamma$  in
$$X_\gamma:= L_p(\Omega)\times W^{2-1/p}_p(\Gamma_*),$$
where
\begin{equation}
\label{L-gamma}
\begin{split}
D(L_\gamma)&=\big\{(v,\rho)\in [H^2_p(\Omega\setminus\Gamma_*)\cap
C(\bar\Omega)]
\times W^{4-1/p}_p(\Gamma_*): \\
&\hspace{15mm}\partial_{\nu_\Omega}v=0,\;
(l_*^2/u_*)v+l_*\sigma A_* \rho=\gamma_*[\![d_*\partial_\nu v]\!]\big\}, \\
L_\gamma(v,\rho)&= ((-d_*/\kappa_*)\Delta v, -(\sigma/\gamma_*)A_*\rho-(l_*/u_*\gamma_*)v).\\
\end{split}
\end{equation}
The main result on the problem (\ref{lin}) for $\gamma_*>0$ is the following.
\begin{theorem}
\label{MR-gamma}
Let $3<p<\infty$, and suppose $\sigma,\gamma_*>0$.
 Then for each finite interval $J=[0,t_0]$, there is a unique solution $z=(v,\rho)\in\EE(J)$
of (\ref{lin}) if and only if the data $(f,g,h)$ and $z_0=(v_0,\rho_0)$ satisfy
$$(f,g,h)\in\FF(J),\quad z_0\in [W^{2-2/p}_p(\Omega\setminus\Gamma_*)\cap C(\bar{\Omega})]\times W^{4-3/p}_p(\Gamma_*)$$
and the compatibility condition
$$\partial_{\nu_\Omega}v_0=0,\quad (l_*^2/u_*)v_0+l_*\sigma A_* \rho_0-\gamma_*[\![d\partial_\nu v_0]\!]= l_*g(0)+\gamma_*h(0).$$
The operator $-L_\gamma$ defined above generates an analytic $C_0$-semigroup in
$X_\gamma$ with maximal regularity of type $L_p$.
\end{theorem}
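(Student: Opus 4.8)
I would treat problem~\eqref{lin} as the constant-coefficient specialization of~\eqref{prlin}, with $l_1\equiv l_\ast/u_\ast$, $\gamma_1\equiv\gamma_\ast$ and $\sigma_0\Delta_\Sigma$ replaced by $\sigma A_\ast=\sigma_0\Delta_\ast+(\sigma/R_\ast^2)\,\mathrm{id}$; since the extra summand $(\sigma/R_\ast^2)\,\mathrm{id}$ maps $\EE_2(J)$ into $\FF_2(J)$ with a gain of regularity, it is a genuine lower order perturbation. The plan is then to proceed as for Theorems~\ref{linmaxreggamma=0} and~\ref{linpneq0}: localize $\Gamma_\ast$, freeze the (constant) coefficients, flatten the interface and reflect the lower half-space, reducing matters to a half-space model problem of the type treated by Denk, Pr\"uss and Zacher~\cite{DPZ08}. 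Applying the Laplace transform in $t$ (covariable $\lambda$) and the Fourier transform in the tangential variables (covariable $\xi\in\R^{n-1}$) reduces the model problem to an algebraic boundary equation whose symbol is, up to positive constants and the lower order contribution of $\sigma/R_\ast^2$, the one in the Remark following Theorem~\ref{linpneq0}:
\[
s(\lambda,\xi)=\lambda l_\ast^2+\big(\gamma_\ast\lambda+\sigma_0|\xi|^2-\sigma/R_\ast^2\big)\Big[\sqrt{\lambda\kappa_{\ast,1}+d_{\ast,1}|\xi|^2}+\sqrt{\lambda\kappa_{\ast,2}+d_{\ast,2}|\xi|^2}\,\Big].
\]

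The heart of the argument is the Lopatinskii--Shapiro (parameter-ellipticity) condition, which I would verify as follows: fixing $\delta>0$ small, for every $\xi\in\R^{n-1}$ and every $\lambda$ with $|\arg\lambda|\le\pi/2+\delta$ and $|\lambda|\ge R_0$, each of the two square roots has argument in $(-\pi/4-\delta,\pi/4+\delta)$, while $\gamma_\ast\lambda+\sigma_0|\xi|^2-\sigma/R_\ast^2$ has argument in $(-\pi/2-\delta,\pi/2+\delta)$ and modulus $\gtrsim|\lambda|$, and it is here that $\gamma_\ast>0$ is essential. Hence the bracketed factor is nonzero, the second term of $s$ has modulus $\gtrsim|\lambda|^{3/2}$ and therefore dominates $|\lambda l_\ast^2|$, giving $|s(\lambda,\xi)|\gtrsim|\lambda|^{3/2}$ on the shifted sector uniformly in $\xi$; in particular $l_\ast$ plays no role, in accordance with that Remark. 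Feeding this estimate together with the accompanying $\cR$-bounds into Theorems~2.1 and~2.2 of~\cite{DPZ08} (applied as in the proof of Theorem~\ref{linmaxreggamma=0}, see also~\cite{PrSi08}) yields at once the maximal $L_p$-regularity assertion for~\eqref{lin} on $[0,t_0]$ and the $\cR$-sectoriality of $L_\gamma$ in $X_\gamma$ with $\cR$-angle $<\pi/2$. Since $X_\gamma$ is a UMD space, the latter gives maximal $L_p$-regularity by the theorem of Weis, and since $X_\gamma$ is reflexive the sectorial operator $L_\gamma$ is automatically densely defined, so $-L_\gamma$ generates an analytic $C_0$-semigroup. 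The stated compatibility condition is precisely $l_\ast\times(\text{fourth equation})+\gamma_\ast\times(\text{fifth equation})$ of~\eqref{lin} evaluated at $t=0$, the combination in which the trace $\partial_t\rho(0)$ cancels; no further compatibility is needed because for $\gamma_\ast>0$ the regularity of $\rho_0$ already forces $\partial_t\rho(0)\in W^{2-3/p}_p(\Gamma_\ast)$, unlike the case $\gamma=0$ of Theorem~\ref{linpeq0}. Necessity of the asserted regularity of $(f,g,h)$ and $z_0$, and of the compatibility condition, is trace theory. Alternatively, the finite-interval part can be obtained by the decoupling argument in the proof of Theorem~\ref{linpneq0} (solve the parabolic $\rho$-equation for given $v_\Sigma$, then the transmission problem for $v$, then close the loop via a compact perturbation of the identity on $\FF_2$), absorbing $(\sigma/R_\ast^2)\rho$ by a Neumann series.

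The main obstacle is the parameter-elliptic symbol estimate uniform in $(\lambda,\xi)$: the relevant part of $s$ is of order $3/2$ in $t$ and order $3$ in $x$, so the standard parabolic theory does not apply and one has to set up the Newton polygon / mixed-order framework and match it with the anisotropic trace and interpolation spaces $\EE(J)$, $\FF(J)$ and $X_\gamma$. I would stress that the interference, for small $|\xi|$, between the zeroth order curvature term $\sigma/R_\ast^2$ and the leading part, which is exactly where stability versus instability of the equilibrium is decided (cf.~\eqref{var-sc}), is immaterial for the present generation statement: it is enough to prove the estimate for $|\lambda|\ge R_0$, which is insensitive to lower order terms, and then to shift $L_\gamma$ by a sufficiently large constant.
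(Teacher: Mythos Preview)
Your proposal is correct, but it does substantially more work than the paper. The paper's own proof of Theorem~\ref{MR-gamma} is two lines: the maximal regularity assertion is simply the constant-coefficient special case of Theorem~\ref{linpneq0} (which has already been proved), and the semigroup statement then follows from \cite[Proposition~1.2]{Pru03}, which says that maximal $L_p$-regularity of the Cauchy problem already implies that $-L_\gamma$ generates an analytic $C_0$-semigroup.

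What you propose instead is essentially to \emph{reprove} Theorem~\ref{linpneq0} for this special situation, and moreover by the route the paper explicitly declines to take: in the proof of Theorem~\ref{linpneq0} the authors write that one ``could follow the strategy in the proof of Theorem~\ref{linmaxreggamma=0}, employing the methods in \cite{DPZ08} once more,'' but opt for the decoupling/Fredholm argument instead. Your main line (localize, flatten, reflect, verify the Lopatinskii--Shapiro condition for the boundary symbol, invoke \cite{DPZ08}) is exactly that first route, and your symbol estimate and the remark that $l_\ast$ is lower order when $\gamma_\ast>0$ are fine. You also mention the decoupling argument as an alternative; that is in fact the argument the paper already carried out in Theorem~\ref{linpneq0}, so once you recognize \eqref{lin} as an instance of \eqref{prlin} there is nothing left to do for the first part.

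For the semigroup part you go via $\cR$-sectoriality, Weis's theorem, and density from reflexivity. This is correct but again longer than needed: the paper's appeal to \cite[Proposition~1.2]{Pru03} gets analytic generation directly from maximal regularity without passing through $\cR$-bounds. Your discussion of the compatibility condition (the $l_\ast$--$\gamma_\ast$ combination that eliminates $\partial_t\rho(0)$) and of the irrelevance of the zeroth-order curvature term $\sigma/R_\ast^2$ for the generation statement is accurate and useful commentary, even if the paper does not spell it out.
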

\begin{proof}[Proof of Theorem \ref{MR-0} and Theorem \ref{MR-gamma}]
These results are, up to the last assertions, special cases of Theorems \ref{linpeq0} resp.\ \ref{linpneq0} in Section 3. In addition, since
the Cauchy problem for $L_\gamma$ has  maximal $L_p$-regularity, we conclude in both cases
by \cite[Proposition 1.2]{Pru03} that $-L_\gamma$ generates an analytic
$C_0$-semigroup in $X_\gamma$. Recall that the spaces $\EE(J)$ are different for $\gamma=0$ and $\gamma>0$.
\end{proof}
\subsection{The eigenvalue problem}
By compact embedding, the spectrum of $L_\gamma$ consists only of countably many discrete eigenvalues of
finite multiplicity and is independent of $p$. The eigenvalue problem reads as follows
\begin{equation}
\label{evp}
\left\{\begin{aligned}
\kappa_*\lambda v-d_*\Delta v&=0 &&\text{in }&&\Omega\setminus\Gamma_*\\
\partial_{\nu_\Omega} v &=0 &&\text{on }&&\partial \Omega\\
[\![v]\!]&=0 &&\text{on }&&\Gamma_*\\
(l_*/u_*)v+\sigma A_* \rho -\gamma_* \lambda\rho &=0 &&\text{on }&&\Gamma_*\\
l_*\lambda \rho -[\![d_*\partial_\nu v]\!]&=0 &&\text{on }&&\Gamma_*.\\
\end{aligned}\right.
\end{equation}
Assume first that $\Gamma_*$ is connected. As shown in
\cite{PrSi08}, $\lambda=0$ is always an eigenvalue, and
$N(L_\gamma)$ is independent of $\gamma_*\geq0$, $\kappa_*$ and
$d_*$. We have
\begin{equation}
\label{N-L}
N(L_\gamma)={\rm span}\left\{(\frac{\sigma u_*}{l_*R^2_*},-1),(0,Y_1),\ldots,(0,Y_n)\right\},
\end{equation}
where the functions $Y_j$ denote the {\em spherical harmonics of degree one},
 normalized by $(Y_j|Y_k)_{L_2(\Gamma_*)}=\delta_{jk}$.
$N(L_\gamma)$ is isomorphic to the tangent space of $\cE$ at $(u_*,\Gamma_*)\in\cE$.
Let $\lambda\neq0$ be an eigenvalue with eigenfunction $(v,\rho)\neq0$.
Then (\ref{evp}) yields
$$\lambda\big\{|\sqrt{\kappa_*}v|^2_{L_2(\Omega)}-\sigma u_*(A_*\rho|\rho)_{L_2(\Gamma_*)}\big\} +
|\sqrt{d_*}\nabla v|^2_{L_2(\Omega)} +\gamma_*
u_*|\lambda|^2|\rho|^2_{L_2(\Gamma_*)}=0.$$ Since $A_*$ is
selfadjoint in $L_2(\Gamma_*)$, this identity shows that all
eigenvalues of $L_\gamma$ are real.
Decomposing $v=v_0+\bar{v}$, $\rho=\rho_0+\bar{\rho}$, with
$(\kappa_*|v_0)_{L_2(\Omega)}=(\rho_0|1)_{L_2(\Gamma_*)}=0$, this
identity can be rewritten as
\begin{equation}\label{evid}
\begin{aligned}
&\lambda\big\{|\sqrt{\kappa_*}v_0|^2_{L_2(\Omega)} -\sigma
u_*(A_*\rho_0|\rho_0)_{L_2(\Gamma_*)}+
\lambda u_*\gamma_*|\rho_0|^2_{L_2(\Gamma_*)}\big\} +|\sqrt{d_*}\nabla v_0|^2_{L_2(\Omega)}\\
&+\big[ \lambda \gamma_*u_* +
l_*^2|\Gamma_*|/(\kappa_*|1)_{L_2(\Omega)} - \sigma u_* /R_*^2\big
]\lambda\bar{\rho}^2 |\Gamma_*| =0.\nonumber
\end{aligned}
\end{equation}
In the case that $\Gamma_*$ is connected,
the bracket determines whether there is a positive eigenvalue.
\\
If $\Gamma_*=\bigcup\nolimits_{1\le l\le m}\Gamma^l_\ast$ consists of $m>1$ spheres $\Gamma^l_\ast$ of equal radius, then
\begin{equation}
\label{N-L-m} N(L_\gamma) = {\rm span}\left\{(\frac{\sigma
u_*}{l_*R^2_*},-1),(0,Y^l_1),\ldots,(0,Y^l_n) : \, 1\le l\le
m\right\},
\end{equation}
where the functions $Y^l_j$ denote the {\em spherical harmonics of degree one} on $\Gamma_\ast^l$
(and $Y^l_j\equiv 0$ on $\bigcup\nolimits_{i\neq l}\Gamma^i_\ast$),
normalized by $(Y^l_j|Y^l_k)_{L_2(\Gamma^l_*)}=\delta_{jk}$.
$N(L_\gamma)$ is isomorphic to the tangent space of $\cE$ at $(u_*,\Gamma_*)\in\cE$,
as will be shown in Theorem \ref{lin-stability} below.

\goodbreak
\begin{theorem}
\label{lin-stability}
Let $\sigma>0$, $\gamma_*\geq0$, $l_*\neq0$,
and assume that the interface $\Gamma_*$ consists of $m\geq1$
components. Let
\begin{equation}
\label{zeta}
 \zeta_\ast= \frac{\sigma u_*(\kappa_*|1)_{L_2(\Omega)}}{l_*^2R_*^2|\Gamma_*|},
\end{equation}
and let $\varphi$ be defined as in \eqref{eq-rel}. Then
\begin{itemize}
\item[{(i)}]
$\varphi^\prime(u_*)= (\zeta_\ast-1)l^2_*R^2_*|\Gamma_*|/({\sigma u_*})$.
\vspace{0mm}
\item[{(ii)}] $0$ is a an eigenvalue of $-L_\gamma$
with geometric multiplicity $(mn+1)$.
\vspace{0mm}
\item[{(iii)}] $0$ is semi-simple if $\zeta_\ast\neq 1$.
\vspace{0mm}
\item[{(iv)}] If $\Gamma_*$ is connected and $\zeta_\ast\le 1$, then all eigenvalues of $-L_\gamma$
are negative, except for $0$.
\vspace{0mm}
\item[{(v)}] If $\zeta_\ast>1$, then there are precisely $m$ positive eigenvalues of
$-L_\gamma$, 
\vspace{0mm}
\item[(vi)] If $\zeta_\ast\leq 1$ then $-L_\gamma$ has precisely $m-1$ positive eigenvalues.
\item[{(vii)}] $N(L_\gamma)$ is isomorphic to the tangent space $T_{(u_*,\Gamma_*)}\cE$ of $\cE$ at
$(u_*,\Gamma_*)\in\cE$.
\vspace{0mm}
\end{itemize}
\end{theorem}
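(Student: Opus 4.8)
The plan is to run everything off the eigenvalue problem \eqref{evp} and the quadratic identity \eqref{evid}, which already tells us that all eigenvalues of $L_\gamma$ are real (because $A_*$ is self-adjoint on $L_2(\Gamma_*)$). Part (i) is just the computation of $\varphi'$ from \S4.1 evaluated at $u=u_*$, using $l_*=u_*h'(u_*)$. For (ii) I would solve \eqref{evp} with $\lambda=0$: the $v$-equations say $-\mathrm{div}(d_*\nabla v)=0$ on all of $\Omega$ with $\partial_{\nu_\Omega}v=0$ and no jumps across $\Gamma_*$, so $v\equiv c$ is constant; the Gibbs--Thomson equation becomes $\sigma A_*\rho=-(l_*/u_*)c$, solvable since $A_*$ is boundedly invertible on the orthogonal complement of $N(A_*)=\bigoplus_l\mathrm{span}\{Y_1^l,\dots,Y_n^l\}$ and the constant right-hand side is orthogonal to $N(A_*)$; as $A_*$ acts as $1/R_*^2$ on constants, the particular solution is the constant $-R_*^2l_*c/(\sigma u_*)$ on every component, and reading off the general solution reproduces \eqref{N-L}, resp.\ \eqref{N-L-m}, so the geometric multiplicity of $0$ is $mn+1$ (for connected $\Gamma_*$ this is \cite{PrSi08}). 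For (vii) I would parametrize $\cE$ near $(u_*,\Gamma_*)$ by $(u,x_1,\dots,x_m)\mapsto\big(u,\bigcup_l S_{R(u)}(x_l)\big)$, $R(u)=\sigma/[\![\psi(u)]\!]$, over the open set in $\R\times(\R^n)^m$ where the closed balls $\bar B_{R(u)}(x_l)\subset\Omega$ are disjoint; since $\psi\in C^3$ this map is $C^1$ into $W^2_p(\Omega\setminus\Gamma_*)\times W^{4-1/p}_p(\Gamma_*)$, it is injective, and its differential is injective ($\partial_u$ gives the constant temperature variation together with the radial normal velocity $R'(u_*)$, while $\partial_{x_l}$ in direction $e_j$ gives the normal velocity $(e_j\,|\,\nu_{\Gamma_*^l})$ on the $l$-th sphere and $0$ elsewhere). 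Hence $\cE$ is a $C^1$-manifold of dimension $mn+1$, and since $(e_j\,|\,\nu_{\Gamma_*^l})$ restricted to $S_{R_*}(x_l)$ is a degree-one spherical harmonic and $R'(u_*)=-l_*R_*^2/(u_*\sigma)$ (from \S4.1), these tangent vectors are exactly the generators of $N(L_\gamma)$ in \eqref{N-L-m}, giving the isomorphism.

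For (iii) I would use that, the spectrum being discrete, $0$ is semi-simple iff $N(L_\gamma)\cap R(L_\gamma)=\{0\}$. Suppose $L_\gamma(v,\rho)=\eta$ with $\eta\in N(L_\gamma)$, $\eta=\alpha\big(\tfrac{\sigma u_*}{l_*R_*^2},-1\big)+\sum_{l,j}\beta_{lj}(0,Y_j^l)$, so $-d_*\Delta v=\kappa_*\eta_1$ in $\Omega\setminus\Gamma_*$, $-[\![d_*\partial_\nu v]\!]=l_*\eta_2$ on $\Gamma_*$, with $\eta_1=\alpha\tfrac{\sigma u_*}{l_*R_*^2}$ constant. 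Integrating the first equation over $\Omega\setminus\Gamma_*$ (here $d_*$ is constant in each phase, as $u_*$ is constant) and using the boundary and transmission conditions gives $\alpha\,l_*|\Gamma_*|(\zeta_*-1)=0$, so $\alpha=0$ when $\zeta_*\neq1$; then $\eta=(0,\eta_2)$ with $\eta_2\in N(A_*)$. Testing $-d_*\Delta v=0$ with $v$, inserting the domain relation $(l_*/u_*)v|_{\Gamma_*}+\sigma A_*\rho=-\gamma_*\eta_2$ and using $\int_{\Gamma_*}\eta_2 A_*\rho=\int_{\Gamma_*}(A_*\eta_2)\rho=0$, yields $\int_\Omega d_*|\nabla v|^2=-u_*\gamma_*\int_{\Gamma_*}\eta_2^2\le0$; hence $v$ is constant and $\gamma_*\eta_2=0$, so $[\![d_*\partial_\nu v]\!]=0$ and $\eta_2=0$ since $l_*\neq0$, i.e.\ $\eta=0$.

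Parts (iv)--(vi) then rest on \eqref{evid} and the spectrum of $A_*$ on $L_2(\Gamma_*)$: $A_*=1/R_*^2>0$ on the per-component constants, $A_*=0$ on $N(A_*)$, $A_*<0$ on the rest. In \eqref{evid}, $\rho=\rho_0+\bar\rho$ with $\bar\rho$ the mean part and $(\rho_0\,|\,1)_{L_2(\Gamma_*)}=0$, the constant mode of $v$ being tied to $\bar\rho$ by $\int_\Omega\kappa_* v=\mp l_*\int_{\Gamma_*}\rho$. If $\Gamma_*$ is connected then $\rho_0$ has no constant mode, hence $(A_*\rho_0\,|\,\rho_0)\le0$; if moreover $\zeta_*\le1$ then (since $\zeta_*\le1$ is equivalent to $l_*^2|\Gamma_*|/(\kappa_*|1)_{L_2(\Omega)}\ge\sigma u_*/R_*^2$) the bracket $\lambda\gamma_*u_*+l_*^2|\Gamma_*|/(\kappa_*|1)_{L_2(\Omega)}-\sigma u_*/R_*^2$ is $\ge0$ for $\lambda>0$, so every term of \eqref{evid} is non-negative for $\lambda>0$; forcing them to vanish and using the remaining equations of \eqref{evp} shows the eigenfunction is trivial, so there is no positive eigenvalue and, the spectrum being real and containing $0$, all nonzero eigenvalues are negative — this is (iv). In general \eqref{evid} shows that a positive eigenfunction must have $\rho_0$ in the subspace where $A_*\ge0$, i.e.\ the span of the per-component constants, so the unstable eigenspace injects into an $m$-dimensional space (the $(m-1)$-dimensional space of piecewise-constant mean-zero $\rho_0$ plus the scalar $\bar\rho$-mode), and the $\bar\rho$-mode can contribute only when $\zeta_*>1$; this gives the upper bounds $m-1$, resp.\ $m$. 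The matching lower bounds I would get by identifying the number of eigenvalues of $-L_\gamma$ with $\mathrm{Re}\,\lambda>0$ with the negative Morse index of the constrained Hessian $\cD=[\Phi+\mu{\sf E}]''(e_*)$, computed in the introduction to be $m-1$ if $\zeta_*\le1$ and $m$ if $\zeta_*>1$, using that the problem is gradient-like for $-\Phi$; alternatively by a homotopy in $\gamma_*$ and in the geometry, along which eigenvalues move continuously and cannot cross $0$ because, by (ii)--(iii), the kernel keeps dimension $mn+1$ and stays semi-simple. This gives (v) and (vi).

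The main obstacle is exactly this sharp count in (v)--(vi): because $L_\gamma$ is not self-adjoint, the identity \eqref{evid} only produces the upper bound on the dimension of the unstable subspace, so the lower bound has to come either from the reduction to the self-adjoint second variation $\cD$ (together with the general correspondence between unstable eigenvalues and the constrained Morse index for this class of problems) or from a continuation argument in which one must verify that no eigenvalue leaves or enters $\{\mathrm{Re}\,\lambda\ge0\}$ and that precisely one eigenvalue is created as $\zeta_*$ crosses the threshold $1$. Everything else — the kernel computation, the semi-simplicity estimate, and the manifold structure of $\cE$ — is essentially bookkeeping once the identities above are in place.
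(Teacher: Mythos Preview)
Your treatment of (i)--(iv) and (vii) is sound and close in spirit to what the paper imports from \cite{PrSi08}; the kernel computation, the semi-simplicity argument via $N(L_\gamma)\cap R(L_\gamma)=\{0\}$, and the parametrization of $\cE$ are all correct.

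The genuine gap is in (v)--(vi), and it is twofold. First, your upper bound does not follow from \eqref{evid}. The identity only says that a certain \emph{sum} vanishes; for $\lambda>0$ this forces the sum of the potentially negative terms $-\sigma u_*(A_*\rho_0|\rho_0)$ and the $\bar\rho$-bracket to balance the nonnegative ones, but it does \emph{not} force $\rho_0$ to lie in the nonnegative eigenspace of $A_*$ --- $\rho_0$ can perfectly well have components in the higher spherical harmonics as long as the piecewise-constant part dominates. Moreover, even if each individual eigenfunction had $\rho_0$ in an $m$-dimensional space, this would bound the geometric multiplicity of a \emph{single} eigenvalue, not the total number of positive eigenvalues; there is no injection of the full unstable subspace into $\R^m$ coming from \eqref{evid}. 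Second, your lower bounds are not established: the Morse-index correspondence you invoke is not automatic, since $L_\gamma$ is not self-adjoint and the Stefan problem is not a gradient flow for $-\Phi$ in any metric identified here; and the homotopy argument cannot be run across the threshold $\zeta_*=1$, because there $0$ ceases to be semi-simple (see Remark (d) after the theorem), so eigenvalues \emph{can} cross zero along any path that varies $\zeta_*$ through $1$.

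The paper's route is quite different and gives the exact count directly. One solves the elliptic part of \eqref{evp} for $v$ in terms of the Neumann datum $-l_*\lambda\rho$ via the Neumann-to-Dirichlet operator $N_\lambda$, reducing the eigenvalue problem to $B_\lambda\rho=0$ on $L_2(\Gamma_*)$, where $B_\lambda:=(l_*^2/u_*)\lambda N_\lambda+\gamma_*\lambda-\sigma A_*$ is self-adjoint with compact resolvent. One then proves (Proposition~\ref{NHlambda}) that $B_\lambda$ is positive semidefinite for large $\lambda$, while as $\lambda\to0$ one computes $\lim_{\lambda\to0}\lambda N_\lambda$ explicitly and finds that $B_0$ has exactly $m$ (resp.\ $m-1$) negative eigenvalues when $\zeta_*>1$ (resp.\ $\zeta_*\le1$), supported on the per-component constants. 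Since a positive eigenvalue of $-L_\gamma$ corresponds to a zero of $\lambda\mapsto\det B_\lambda$ (on the relevant finite-dimensional piece), continuity in $\lambda$ yields precisely the claimed counts. This bypasses both the injection argument and any homotopy through $\zeta_*=1$.
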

\begin{remarks} (a) The result is also true if $l_*=0$ and
$\gamma_*\neq 0$. In this case $\varphi'(u_*)=
(\kappa_*|1)_{L_2(\Omega)}>0$ and $\zeta_\ast=\infty$, hence the
equilibrium is always unstable.
\vspace{1mm}\\
(b) Note that $\zeta_\ast$ does neither depend on $d$, nor on the
undercooling coefficient $\gamma$.
\vspace{1mm}\\
(c)
For the {\em Mullins-Sekerka problem}, that is, for $\kappa\equiv0$,
we have $\zeta_\ast\equiv0$, in accordance with the result obtained in
\cite{ES98}.
\vspace{1mm}\\
(d) It is shown in \cite{PrSi08} that in case $\zeta_\ast=1$ and
$\Gamma_*$  connected,  the eigenvalue $0$ is no longer semi-simple:
its algebraic multiplicity rises by $1$. This is also true if
$\Gamma_*$ is disconnected.
\end{remarks}
\begin{proof}[Proof of Theorem \ref{lin-stability}]
For the case that $\Gamma_*$ is connected this result is proved in \cite{PrSi08}.
The assertions (i)-(iii) also remain valid in the disconnected case.
However, the proof of  \cite[Theorem 2.1(e)]{PrSi08}, addressing instability, is not completely correct,
as it relies on  the assertions \cite[Proposition 3.2(b) and Proposition 5.1(c)]{PrSi08}
which are incorrect.
(We remark, though, that the instability result of \cite[Theorem 1.3]{PrSi08}
indeed is valid.)
Here we give a modified proof for \cite[Theorem 2.1(e)]{PrSi08}
which also applies in case $\Gamma_*$ is not connected.
\medskip\\
\noindent
It thus remains to prove the assertions in (v), (vi), and (vii).
If the stability condition $\zeta_\ast\leq1$ does not hold or if ${\Gamma_*}$ is disconnected, then there
is always a positive eigenvalue. To prove this we proceed as follows. Suppose $\lambda>0$ is an eigenvalue, and
that $\rho$ is known; solve the elliptic transmission problem
\begin{equation}
\label{NDdiffusion}
\left\{\begin{aligned}
\kappa_*\lambda v -d_*\Delta v &=0 &&\text{in}&& \Omega\setminus{\Gamma_*}\\
\partial_{\nu_\Omega} v&=0 &&\text{on}&&\partial\Omega \\
[\![v]\!]&=0 &&\text{on} && {\Gamma_*}\\
 -[\![d_*\partial_\nu v]\!]&= h &&\text{on}&&  {\Gamma_*}\\
\end{aligned}\right.
\end{equation}
to get $v=S_\lambda h$, with $S_\lambda$ being the solution
operator. Then taking the trace at $\Gamma_*$ we obtain
$v|_{\Gamma_*} =N_\lambda h$, where $N_\lambda$ denotes the
Neumann-to-Dirichlet operator for the transmission problem
\eqref{NDdiffusion}. Setting $h=-\lambda l_*\rho$ this implies with
the linearized Gibbs-Thomson law
 the equation
\begin{equation}\label{Tlambda} [(l_*^2/u_*)\lambda N_\lambda +\gamma_* \lambda]\rho -\sigma A_* \rho =0.
\end{equation}
$\lambda>0$ is an eigenvalue of $-L_\gamma$ if and only if
\eqref{Tlambda} admits a nontrivial solution. We consider this
problem in $L_2({\Gamma_*})$. Then $A_*$ is selfadjoint and
$$-\sigma(A_* g|g)_{L_2(\Gamma_*)} \geq -\frac{\sigma}{R_*^2} |g|^2_{L_2(\Gamma_*)}.$$
On the other hand, we will see below that $N_\lambda$ is
selfadjoint and positive semi-definite on $L_2({\Gamma_*})$.
Moreover, since $A_*$ has compact resolvent, the operator
$$
B_\lambda:=[(l_*^2/u_*)\lambda N_\lambda +\gamma_*
\lambda]-\sigma A_*$$ 
has compact resolvent as well, for each
$\lambda>0$. Therefore the spectrum of $B_\lambda$ consists only
of eigenvalues which, in addition, are real. We intend to prove
that in case either $\Gamma_*$ is disconnected or the stability
condition does not hold, $B_{\lambda_0}$ has $0$ as an eigenvalue,
for some $\lambda_0>0$.

We will need the following result on the Neumann-to-Dirichlet
operator $N_\lambda$. We denote by ${\sf e}$ the function which is
identically to one on $\Gamma_*$.
\begin{proposition}
\label{NHlambda}
The Neumann-to-Dirichlet operator $N_\lambda$ for problem \eqref{NDdiffusion} has the following properties in $L_2(\Gamma_*)$.
\begin{itemize}
\item[(i)]
If $v$ denotes the solution of \eqref{NDdiffusion}, then
\begin{equation*}
\quad (N_\lambda h|h)_{L_2(\Gamma_*)} =
\lambda |\sqrt{\kappa_*}v|^2_{L_2(\Omega)} + |\sqrt{d_*}\nabla v|_{L_2(\Omega)}^2 ,
\quad \lambda>0, \; h\in L_2(\Gamma_*).
\end{equation*}
\item[(ii)]
For each $\alpha\in(0,1/2)$ and $\lambda_0>0$ there is a constant $C>0$ such that
$$(N_\lambda h|h)_{L_2(\Gamma_*)}\geq \frac{\lambda^\alpha}{C}|N_\lambda h|_{L_2(\Gamma_*)}^2,
\quad h\in L_2(\Gamma_*),\; \lambda\geq\lambda_0.$$
In particular,
$N_\lambda$ is injective, and
$$ |N_\lambda|_{\cB(L_2(\Gamma_*))}\leq \frac{C}{\lambda^\alpha}, \quad \lambda\geq\lambda_0.$$
\item[(iii)]
 On $L_{2,0}(\Gamma_*)= \{\theta\in L_2(\Gamma_*):\, (\theta|{\sf e})_{L_2(\Gamma_*)}=0\}$, we even have
$$(N_\lambda h|h)_{L_2(\Gamma_*)}\geq \frac{(1+\lambda)^\alpha}{C}|N_\lambda h|_{L_2(\Gamma_*)}^2,\quad h\in L_{2,0}(\Gamma_*),\; \lambda>0,$$
and
$$ |N_\lambda|_{\cB(L_{2,0}(\Gamma_*),L_2(\Gamma_*))}\leq \frac{C}{(1+\lambda)^\alpha}, \quad \lambda>0.$$
\end{itemize}
\noindent In particular, for $\lambda=0$, \eqref{NDdiffusion} is
solvable if and only if $(h|{\sf e})_{\Gamma_*}=0$, and then the
solution is unique up to a constant.
\end{proposition}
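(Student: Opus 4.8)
The plan is to read everything off the variational formulation of \eqref{NDdiffusion}. For $\lambda>0$ the bilinear form
$$a_\lambda(v,w):=\lambda(\kappa_* v|w)_{L_2(\Omega)}+(d_*\nabla v|\nabla w)_{L_2(\Omega)}$$
is bounded and coercive on $H^1(\Omega)$ (recall $\kappa_*,d_*$ are constants bounded below by a positive number, since $u_*$ is constant), while by the interior trace theorem $w\mapsto(h|w_{|\Gamma_*})_{L_2(\Gamma_*)}$ is a bounded functional on $H^1(\Omega)$; hence Lax--Milgram gives for every $h\in L_2(\Gamma_*)$ a unique weak solution $v\in H^1(\Omega)$ of \eqref{NDdiffusion}, and $N_\lambda h:=v_{|\Gamma_*}\in H^{1/2}(\Gamma_*)\hookrightarrow L_2(\Gamma_*)$ is well defined. (The transmission condition $[\![v]\!]=0$ is exactly what makes the weak solution belong to $H^1$ on all of $\Omega$: the matching of traces across $\Gamma_*$ removes the would-be singular part of $\nabla v$; this lets us use interpolation and trace theory on the smooth domain $\Omega$.) Testing the weak identity with $w=v$ yields (i) at once, and thus $N_\lambda\ge0$ for $\lambda>0$; testing with the solutions $v_1,v_2$ associated to data $h_1,h_2$ gives $(N_\lambda h_1|h_2)_{L_2(\Gamma_*)}=a_\lambda(v_1,v_2)=(h_1|N_\lambda h_2)_{L_2(\Gamma_*)}$, so $N_\lambda$ is symmetric; and if $N_\lambda h=0$ then $(N_\lambda h|h)=0$, whence by (i) $\nabla v=0$ and, since $\lambda>0$, $v\equiv0$, so that $h=-[\![d_*\partial_\nu v]\!]=0$, i.e.\ $N_\lambda$ is injective.

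For (ii), put $E:=(N_\lambda h|h)_{L_2(\Gamma_*)}=\lambda\|\kappa_*^{1/2}v\|_{L_2(\Omega)}^2+\|d_*^{1/2}\nabla v\|_{L_2(\Omega)}^2$ (here and below $C$ denotes a constant that may change from line to line). Then $\|v\|_{L_2(\Omega)}^2\le CE/\lambda$ and $\|\nabla v\|_{L_2(\Omega)}^2\le CE$, so $\|v\|_{H^1(\Omega)}^2\le C(1+1/\lambda_0)E$ for $\lambda\ge\lambda_0$. With $\beta\in(0,1/2)$, the interpolation inequality $\|v\|_{H^{1/2+\beta}(\Omega)}\le C\|v\|_{L_2(\Omega)}^{1/2-\beta}\|v\|_{H^1(\Omega)}^{1/2+\beta}$ together with the interior trace estimate $\|v_{|\Gamma_*}\|_{L_2(\Gamma_*)}\le C\|v\|_{H^{1/2+\beta}(\Omega)}$ gives
$$|N_\lambda h|_{L_2(\Gamma_*)}^2\le C\,(E/\lambda)^{1/2-\beta}E^{1/2+\beta}=C\,E/\lambda^{1/2-\beta}.$$
Since $1/2-\beta$ ranges over $(0,1/2)$ as $\beta$ does, relabelling $\alpha=1/2-\beta$ yields the claimed inequality (with $C$ depending on $\lambda_0$), and $|N_\lambda|_{\cB(L_2(\Gamma_*))}\le C/\lambda^\alpha$ follows from $|N_\lambda h|^2\le\frac{C}{\lambda^\alpha}(N_\lambda h|h)\le\frac{C}{\lambda^\alpha}|N_\lambda h|\,|h|$.

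For (iii) the point is to make this estimate uniform as $\lambda\downarrow0$ on the subspace $L_{2,0}(\Gamma_*)$. Testing the weak identity with the constant function $w\equiv1$ gives $\lambda(\kappa_* v|1)_{L_2(\Omega)}=(h|{\sf e})_{L_2(\Gamma_*)}$, so for $h\in L_{2,0}(\Gamma_*)$ the solution satisfies the weighted mean-zero condition $(\kappa_* v|1)_{L_2(\Omega)}=0$. Since $v\in H^1(\Omega)$ and $\Omega$ is connected, this gives a Poincaré inequality $\|v\|_{L_2(\Omega)}\le C\|\nabla v\|_{L_2(\Omega)}$, hence $\|v\|_{L_2(\Omega)}^2\le CE$; combining with $\|v\|_{L_2(\Omega)}^2\le CE/\lambda$ yields $\|v\|_{L_2(\Omega)}^2\le CE/(1+\lambda)$ and $\|v\|_{H^1(\Omega)}^2\le CE$, now with $C$ independent of $\lambda>0$. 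Running the same interpolation/trace step as in (ii) produces $|N_\lambda h|_{L_2(\Gamma_*)}^2\le CE/(1+\lambda)^{1/2-\beta}$, which is the assertion after relabelling, and the bound on $|N_\lambda|_{\cB(L_{2,0}(\Gamma_*),L_2(\Gamma_*))}$ follows as before. Finally, the $\lambda=0$ statement is the classical Fredholm alternative for the Neumann transmission problem: testing with $w\equiv1$ shows $(h|{\sf e})_{\Gamma_*}=0$ is necessary, while under this condition $a_0(v,w)=(d_*\nabla v|\nabla w)_{L_2(\Omega)}$ is coercive on $H^1(\Omega)/\R$ and Lax--Milgram provides a solution, unique up to an additive constant.

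The main obstacle is not a deep one: it is the bookkeeping of the sharp power of $\lambda$. Once one spots the two test-function identities---testing with $v$ for (i), and with $1$ for the mean-zero constraint used in (iii)---parts (ii) and (iii) collapse to a single interpolation-plus-trace estimate. The one place that genuinely requires care is the use of the Poincaré inequality on $\Omega\setminus\Gamma_*$ with $\Omega_1$ possibly disconnected: this is legitimate only because $[\![v]\!]=0$ promotes $v$ to a true $H^1(\Omega)$ function on the connected domain $\Omega$; and one must check that the constants in the trace and interpolation inequalities are independent of $\lambda$, which they are, since they depend only on the fixed data $\Omega$ and $\Gamma_*$.
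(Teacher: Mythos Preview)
Your proof is correct and follows precisely the route the paper sketches: divergence theorem for (i), trace theory plus interpolation for (ii), and the addition of Poincar\'e's inequality for (iii). The paper itself gives only a three-line outline (``divergence theorem \ldots\ trace theory, combined with Poincar\'e's inequality \ldots\ standard statement \ldots\ we refer to [PrSi08]''), so you have in effect supplied the details that the authors leave to the cited reference. One minor imprecision: $\kappa_*$ and $d_*$ are not single constants but \emph{piecewise} constants (they may take different values in the two phases, since $\kappa$ and $d$ are phase-dependent); this does not affect any step of your argument, as boundedness above and below is all you use, but it is worth stating accurately.
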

\begin{proof}[Proof of Proposition \ref{NHlambda}]
The first assertion follows from the divergence theorem. The second and third assertions  are consequences of trace theory, combined with Poincar\'e's inequality. The last assertion is a standard statement in the theory of elliptic transmission problems. We refer to \cite{PrSi08}.
\end{proof}
\noindent
{\em Proof of Theorem \ref{lin-stability}, continued:}
\\
(a) \, Suppose first that $\Gamma_*$ is connected. Consider $h={\sf
e}$. Then with $c_*:=l_*^2/u_*\geq0$ we have
$$(B_\lambda {\sf
e}|{\sf e})_{L_2(\Gamma_*)}= c_*\lambda(N_\lambda{\sf e}|{\sf
e})_{L_2(\Gamma_*)} +\lambda\gamma_*|{\sf e}|^2_{L_2(\Gamma_*)}-
\frac{\sigma}{R_*^2}|{\sf e}|_{L_2(\Gamma_*)}^2.$$ We compute the
limit $\lim_{\lambda \to0}\lambda(N_\lambda{\sf e}|{\sf
e})_{L_2(\Gamma_*)}$ as follows. First solve the problem
\begin{equation}
\label{NDdiffusion0}
\left\{\begin{aligned}
 -d_*\Delta v &=-\kappa_* a_0 &&\text{in} && \Omega\setminus{\Gamma_*} \\
\partial_{\nu_\Omega} v&=0 &&\text{on}&& \partial\Omega \\
[\![v]\!]&=0 &&\text{on}&& {\Gamma_*} \\
 -[\![d_*\partial_\nu v]\!]&= {\sf e} &&\text{on}&&{\Gamma_*},\\
\end{aligned}\right.
\end{equation}
where $a_0=|\Gamma_*|/(\kappa_*|1)_{L_2(\Omega)}$, which is solvable
since the necessary compatibility condition holds. Let $v_0$ denote
the solution which satisfies the normalization condition
$(\kappa_*|v_0)_{L_2(\Omega)}=0$. Then $v_\lambda:=S_\lambda{\sf
e}-v_0- a_0/\lambda$ satisfies the problem
\begin{equation}
\label{NDdiffusion1}
\left\{\begin{aligned}
\kappa_* \lambda v_\lambda -d_*\Delta v_\lambda &=-\kappa_*\lambda v_0
&&\text{in}&&\Omega\setminus{\Gamma_*}\\
\partial_{\nu_\Omega} v&=0 &&\text{on}&&\partial\Omega\\
[\![v_\lambda]\!]&=0 &&\text{on}&& {\Gamma_*}\\
 -[\![d_*\partial_\nu v_\lambda]\!]&= 0 &&\text{on}&&{\Gamma_*}.\\
\end{aligned}\right.
\end{equation}
By the normalization $(\kappa_*|v_0)_{L_2(\Omega)}=0$  we see that
$v_\lambda$ is bounded in $W^2_2(\Omega\setminus\Gamma_*)$ as
$\lambda\to0$. Hence we have
$$\lim_{\lambda\to0}\lambda N_\lambda{\sf e} = \lim_{\lambda\to0}[\lambda v_\lambda|_{\Gamma_*}+ \lambda v_0|_{\Gamma_*}+
 a_0]= a_0 =|\Gamma_*|/(\kappa_*|1)_{L_2(\Omega)}.$$
This then implies
$$\lim_{\lambda\to0} (B_\lambda {\sf e}|{\sf e})_{L_2(\Gamma_*)} = c_* \frac{|\Gamma_*|^2}{(\kappa_*|1)_{L_2(\Omega)}}-
\,\frac{\sigma}{R_*^2}\,|\Gamma_*|<0,$$ if the stability condition
does not hold, i.e. if $\zeta_\ast>1$.

\medskip

\noindent (b) \, Next suppose that $\Gamma_*$ is disconnected. If
$\Gamma_*$ consists of $m$ components $\Gamma_*^k$, $k=1,..., m$,
we set ${\sf e}_k=1$ on $\Gamma_*^k$ and zero elsewhere. Let
$h=\sum_k a_k {\sf e}_k\neq0$ with $\sum_k a_k=0$, hence $Q_0h=h$,
where $Q_0$ is the canonical projection onto $L_{2,0}(\Gamma_*)$,
\[
Q_0 h=h-\,\frac{(h|{\sf e})_{L_2(\Gamma_*)}}{|\Gamma_*|}.
\]
Then
$$\lim_{\lambda\to0} \lambda N_\lambda h = \lim_{\lambda\to0} \lambda N_\lambda Q_0h=0,$$
since $N_\lambda Q_0$ is bounded as $\lambda\to0$. This implies
$$\lim_{\lambda\to0} (B_\lambda h|h)_{L_2(\Gamma_*)} = - \,\frac{\sigma}{R_*^2}\, \sum_k |\Gamma_*^k|a_k^2<0.$$

\medskip

\noindent (c) \, Next we consider the behavior of $(B_\lambda
h|h)_{L_2(\Gamma_*)}$ as $\lambda\to\infty$. We want to show that
$B_\lambda$ is positive semi-definite for large $\lambda$. For this
purpose we introduce the projections $P$ and $Q$ by
$$Ph = c_m\sum_{k=1}^m (h|{\sf e}_k)_{L_2(\Gamma_*)}{\sf e}_k,\quad Q=I-P,$$
where $ c_m=m/|\Gamma_*|$ in case $\Gamma_*$ has $m$ components.
Recall that $|\Gamma_*^k|=|\Gamma_*|/m$ for $k=1,\ldots,m$. Then
with $h_k =(h|{\sf e}_k)_{L_2(\Gamma_*)}$
\begin{align*}
|(N_\lambda Ph|Qh)_{L_2(\Gamma_*)}|&\leq c_m \sum_k |h_k|\,|(N_\lambda Qh|{\sf e}_k)_{L_2(\Gamma_*)}|\\
&\leq C\sum_k |h_k||N_\lambda Qh|_{L_2(\Gamma_*)}\leq C\lambda^{-\alpha/2}\sum_k|h_k|(N_\lambda Qh|Qh)^{1/2}_{L_2(\Gamma_*)}\\
&\leq C\lambda^{-\alpha/2}[\sum_k|h_k|^2 +m(N_\lambda Qh|Qh)_{L_2(\Gamma_*)}]\\
&\leq C\lambda^{-\alpha/2}[|Ph|_{L_2(\Gamma_*)}^2 +(N_\lambda
Qh|Qh)_{L_2(\Gamma_*)}],
\end{align*}
for $\lambda>0$, and $C$ standing for a generic positive constant,
which may change from line to line. Hence for $\lambda\ge
\lambda_0$, with $\lambda_0$ sufficiently large, we have
\begin{align*}
(N_\lambda h|h)_{L_2(\Gamma_*)}&= (N_\lambda Qh|Qh)_{L_2(\Gamma_*)}+2(N_\lambda Qh|Ph)_{L_2(\Gamma_*)}+ (N_\lambda Ph|Ph)_{L_2(\Gamma_*)}\\
&\geq \frac{1}{2}(N_\lambda Qh|Qh)_{L_2(\Gamma_*)}+ (N_\lambda
Ph|Ph)_{L_2(\Gamma_*)}-
\frac{C}{\lambda_0^{\alpha/2}}|Ph|_{L_2(\Gamma_*)}^2.
\end{align*}
This implies
\begin{align*}(B_\lambda h|h)_{L_2(\Gamma_*)}
= & \,c_*\lambda(N_\lambda h|h)_{L_2(\Gamma_*)} +\gamma_*\lambda |h|^2_{L_2(\Gamma_*)}
-\sigma (A_*h|h)_{L_2(\Gamma_*)}\\
\geq &\, \frac{c_*\lambda}{2}(N_\lambda Qh|Qh)_{L_2(\Gamma_*)}
+ c_*\lambda(N_\lambda Ph|Ph)_{L_2(\Gamma_*)}\\
&\,-\sigma (A_*Qh|Qh)_{L_2(\Gamma_*)}- c|Ph|_{L_2(\Gamma_*)}^2.
\end{align*}a
Since $N_\lambda$ is positive semi-definite and also $-A_*Q$ has
this property, we only need to prove $\lambda(N_\lambda
Ph|Ph)_{L_2(\Gamma_*)}\to \infty$ as $\lambda\to\infty$.

To prove this, similarly as before we estimate
$$ |(N_\lambda{\sf e}_i|{\sf e}_j)_{L_2(\Gamma_*)}|\leq C|N_\lambda{\sf e}_i|_{L_2(\Gamma_*)}
\leq \tilde{C}\lambda_0^{-\alpha/2}(N_\lambda{\sf e}_i|{\sf
e}_i)^{1/2}_{L_2(\Gamma_*)},$$ and choosing $\lambda_0$ sufficiently
large this yields
$$ (N_\lambda Pg|Pg)_{L_2(\Gamma_*)}\geq c_0\Big[ \min_i (N_\lambda {\sf e}_i|{\sf e}_i)_{L_2(\Gamma_*)}-
\frac{C}{\lambda_0^{\alpha/2}}\Big]|Pg|_{L_2(\Gamma_*)}^2.$$
Therefore it is sufficient to show
\begin{equation}\label{asymptNH}
\lim_{\lambda\to\infty} \lambda(N_\lambda{\sf e}_k|{\sf e}_k)_{L_2(\Gamma_*)}=\infty, \quad k=1,\ldots,m.
\end{equation}
So suppose, on the contrary, that $\lambda_j(N_{\lambda_j} g|g)_{L_2(\Gamma_*)}$ is bounded, for some $g={\sf e}_k$ and some
sequence $\lambda_j\to\infty$. Then the corresponding solution
$v_j$ of \eqref{NDdiffusion} is such that $\lambda_j v_j$ is
bounded in $L_2(\Omega)$, hence
 has a weakly convergent subsequence.
 W.l.o.g.\ $\lambda_jv_j\to v_\infty$ weakly in $L_2(\Omega)$.
 Fix a test function $\psi\in \cD(\Omega\setminus\Gamma_*)$. Then
\begin{align*}
\lambda_j(\kappa_* v_j|\psi)_{L_2(\Omega)} & = (d_*\Delta
v_j|\psi)_{L_2(\Omega)}=
 (v_j|d_*\Delta\psi)_{L_2(\Omega)}\\
& =
 (\lambda_j v_j|d_*\Delta\psi)_{L_2(\Omega)}/\lambda_j\to 0
\end{align*}
as $j\to\infty$, hence $v_\infty=0$ in $L_2(\Omega)$. On the other
hand we have
\begin{align*}
0<|\Gamma_*|/m &= \int_{\Gamma_*} g\, ds = \int_{\Gamma_*} -[\![d_*\partial_\nu v_j]\!]\, ds\\
&=\int_\Omega d_*\Delta v_j\, dx = \lambda_j\int_\Omega\kappa_*v_j\, dx 
\to\int_\Omega \kappa_* v_\infty\, dx,
\end{align*}
hence $v_\infty$ is nontrivial, a contradiction. This implies that \eqref{asymptNH} is valid, provided $l_*>0$.

On the other hand, in case $l_*=0$ we have $\gamma_*>0$, hence
$\lambda\gamma_*|g|_{L_2(\Gamma_*)}^2\to\infty$, so also in this
case $B_\lambda$ is positive semi-definite for large $\lambda$.

\medskip

\noindent (d) \, Summarizing, we have shown that $B_\lambda$ is
not positive semi-definite for small $\lambda>0$ if either
$\Gamma_*$ is not connected or the stability condition does not
hold, and $B_\lambda$ is always positive semi-definite for large
$\lambda$. Set
$$\lambda_0 = \sup\{\lambda>0:\, B_\mu \mbox{ is not positive semi-definite for each } \mu\in(0,\lambda]\}.$$
Since $B_\lambda$ has compact resolvent, $B_\lambda$ has a negative eigenvalue for each $\lambda<\lambda_0$. This implies that $0$ is an eigenvalue of
$B_{\lambda_0}$, thereby proving that $-L_\gamma$ admits the positive eigenvalue $\lambda_0$.

Moreover, we have also shown that
$$ B_0h= \lim_{\lambda\to0} c_*\lambda N_\lambda h - \sigma A_* h =
 c_*|\Gamma_*|/(\kappa_*|1)_{L_2(\Omega)} P_0 h - \sigma A_* h ,$$
where $P_0h:=(I-Q_0)h=(h|{\sf e})_{L_2(\Gamma_*)}/|\Gamma_*|$.
Therefore, $B_0$ has the eigenvalue
$c_*|\Gamma_*|/(\kappa_*|1)_{L_2(\Omega)}-\sigma/R_*^2$ with
eigenfunction ${\sf e}$, and in case $m>1$ it also possesses the
eigenvalue $-\sigma/R_*^2$ with precisely $m-1$ linearly
independent eigenfunctions of the form $\sum_k a_k {\sf e}_k$ with
$\sum_k a_k=0$. This implies that $-L_\gamma$ has exactly $m$
positive eigenvalues if the stability condition does not hold, and
$m-1$ otherwise.
\medskip\\
(e) \, It remains to show assertion (vii).
Suppose for the moment that $\Gamma_*$ consists of a single sphere of radius
$R_*=\sigma/[\![\psi(u_*)]\!]$, centered at the origin of $\R^n$.
Suppose ${\mathcal S}\subset \Omega$ is a sphere that is
sufficiently close to $\Gamma_*$. Denote by $(z_1,\ldots,z_n)$ the
coordinates of its center and let $z_0$ be such that
$\sigma/[\![\psi(u_*+z_0)]\!]$
corresponds to its radius.
Then, by \cite[Section 6]{ES98}, the
sphere ${\mathcal S}$ can be parametrized over $\Gamma_*$ by the
distance function
\[
\rho(z)=\sum_{j=1}^n z_j Y_j-R_*+\sqrt{(\sum_{j=1}^n z_j
Y_j)^2+(\sigma/[\![\psi(u_*+z_0)]\!])^2-\sum_{j=1}^n z_j^2}.
\]
Denoting by $O$ a sufficiently small neighborhood of $0$ in
$\R^{n+1}$, the mapping
\begin{equation*}
[z\mapsto \Psi(z):=(u_*+z_0,\rho(z))]:O\to W^2_p(\Omega)\times W^{4-1/p}_p(\Gamma_*)
\end{equation*}
is $C^1$ (in fact $C^k$ if $\psi$ is $C^k$), and the derivative at $0$ is given by
\begin{equation*}
\Psi'(0)h=\big(1,-\sigma [\![\psi^\prime(u_*)]\!]/[\![\psi(u_*)]\!]^2\big)h_0
+\big(0,\sum_{j=1}^n h_j Y_j\big),
\quad h\in \R^{n+1}.
\end{equation*}
Noting that $\sigma [\![\psi^\prime(u_*)]\!]/
[\![\psi(u_*)]\!]^2=l_*R^2_*/(\sigma u_*)$ we can conclude that
near $(u_*,\Gamma_*)$ the set $\cE$ of equilibria is a
$C^1$-manifold in $W^2_p(\Omega)\times W^{4-1/p}_p(\Gamma_*)$ of
dimension $n+1$, and that the tangent space
$T_{(u_*,\Gamma_*)}(\cE)$ coincides with $N(L_\gamma)$, see
\eqref{N-L}. It is now easy to see that this result remains valid
for the case of $m$ spheres of the same radius $R_\ast$. The
dimension of $\cE$ is then given by $(mn+1)$, as $mn$ parameters are
needed to locate the respective centers, and one additional
parameter is needed to track the temperature.
\end{proof}
\section{Nonlinear Stability and Instability of Equilibria}
Before we discuss nonlinear stability of equilibria, we need some preparations.
The first observation is that the equations near an equilibrium $(u_*,\Gamma_*)\in\cE$
can be restated as
\begin{equation}
\label{nonlineq0}
\left\{\begin{aligned}
\kappa_*\partial_tv-d_*\Delta v&=F_*(v,\rho)&&\text{in}&& \Omega\setminus\Gamma_* \\
\partial_{\nu_\Omega} v &=0 &&\text{on}&& \partial \Omega\\
[\![v]\!]&=0 &&\text{on}&&\Gamma_*\\
(l_*/u_*) v + \sigma A_* \rho-\gamma_* \partial_t\rho &=G_*(v,\rho)&&\text{on}&&\Gamma_*\\
l_*\partial_t \rho -[\![d_*\partial_\nu v]\!]&=H_*(v,\rho)&&\text{on}&&\Gamma_*\\
v(0)=v_0,\ \rho(0)&=\rho_0.
\end{aligned}\right.
\end{equation}
where
\begin{align*}
F_*(v,\rho)&= (\kappa_\ast-\kappa(u_*+v))\partial_t v +(d(u_*+v)-d_*)\Delta v+ d(u_*+v)M_2(\rho):\nabla^2 v\\
&\quad-d^\prime(u_*+v)|(I-M_1(\rho))\nabla v|^2 + d(u_*+v)(M_3(\rho)|\nabla v)\\
&\quad +\kappa(u_*+v)\cR(\rho)(u_*+v),\\
G_*(v,\rho)&=-([\![\psi(u_*+v)]\!]+\sigma \cH(\rho))\!+\!(l_*/u_*) v + \sigma A_* \rho
+(\gamma(u_*+v)\beta(\rho)\!-\!\gamma_*)\partial_t\rho,\\
H_*(v,\rho)&=[\![(d(u_*+v)-d_*)\partial_\nu v]\!]+ (l_*-l(u_*+v))\partial_t\rho \\
&\quad -([\![d(u_*+v)\nabla v]\!]|M_4(\rho)\nabla_\Sigma\rho)
+\gamma(u_*+v)\beta(\rho)(\partial_t\rho)^2,
\end{align*}
see Sections 2 and 3 for the definition of
$M_j(\rho)$, $j=1,\ldots,4$.
Here we replace $\partial_t\rho$ in the nonlinearities $G_\ast(v,\rho)$ and 
$H_\ast(v,\rho)$ by the following expressions:
\begin{equation*}
\begin{aligned}
\partial_t\rho & = 
\frac{1}{l(u_\ast + v)}
\big([\![d(u_\ast + v)\partial_\nu v]\!] 
+ ([\![d(u_*+v)\nabla v]\!]|M_4(\rho)\nabla_\Sigma\rho)\big)\quad\text{if}\quad \gamma=0,\\
\partial_t\rho & =
\frac{1}{\beta(\rho)\gamma(u_\ast+v)}\big([\![\psi(u_*+v)]\!]+\sigma \cH(\rho)\big)
\quad\text{if}\quad \gamma >0.
\end{aligned}
\end{equation*} 
From the equilibrium equation $[\![\psi(u_\ast)]\!]+\sigma\cH(0)=0 $
follows that
the nonlinearities satisfy
$F_*(0,0)=G_*(0,0)=H_*(0,0)=0$.
Moreover, we have
$F_*^\prime(0,0)=G_*^\prime(0,0)=H_*^\prime(0,0)=0$.
\\
The state manifold for problem \eqref{nonlineq0} near the equilibrium $(u_\ast,\Gamma_\ast)$
can then be described by
\begin{align}
\label{local-pm0} \cSM_0&=\big\{(v,\rho)\in
W^{2-2/p}_p(\Omega\setminus\Gamma_*)\times W^{4-3/p}_p(\Gamma_*):
\; \partial_{\nu_\Omega}v=0,\; [\![v]\!]=0, \\
&\hspace{1cm} (l_*/u_*) v \!+\! \sigma A_* \rho =G_*(v,\rho),\;
[\![d_*\partial_\nu v]\!]+H_*(v,\rho)\in W^{2-6/p}_p(\Gamma_*)\big\},\nonumber
\end{align}
for $\gamma_*=0$, in case $l_*\neq0$ (otherwise the linear problem is not well-posed) and
\begin{align}
\label{local-pm-gam} \cSM_\gamma&=\big\{(v,\rho)\in
W^{2-2/p}_p(\Omega\setminus\Gamma_*)\times W^{4-3/p}_p(\Gamma_*):
\; \partial_{\nu_\Omega}v=0,\; [\![v]\!]=0, \\
&\hspace{1cm} (l_*^2/u_*) v + l_*\sigma A_* \rho -\gamma_*[\![d_*\partial_\nu v]\!]=l_*G_*(v,\rho)+\gamma_*H_*(v,\rho)
 \big\},\nonumber
\end{align}
in case $\gamma_*>0$.
\medskip\\
We would like to parametrize these manifolds over their tangent spaces at $(0,0)$, given by
\begin{align}
\label{lin-pm0}
\cX_0&=\big\{(\tilde{v},\tilde{\rho})\in [W^{2-2/p}_p(\Omega\setminus\Gamma_*)\cap C(\bar\Omega)]
\times W^{4-3/p}_p(\Gamma_*):\\
&\hspace{7mm}\;\partial_{\nu_\Omega}\tilde{v}=0,\;
(l_*/u_*) \tilde{v} + \sigma A_* \tilde{\rho} =0,\;
[\![d_*\partial_\nu \tilde{v}]\!]\in W^{2-6/p}_p(\Gamma_*)\big\},\nonumber
\end{align}
respectively, for $\gamma_*>0$
\begin{align}
\label{lin-pm-gam} \cX_\gamma&=\big\{(\tilde{v},\tilde{\rho})\in
[W^{2-2/p}_p(\Omega\setminus\Gamma_*)\cap C(\bar\Omega)]
\times W^{4-3/p}_p(\Gamma_*):\\
&\hspace{7mm} \;\partial_{\nu_\Omega}\tilde{v}=0,\;
(l_*^2/u_*)\tilde{v} + l_*\sigma A_* \tilde{\rho} -\gamma_*[\![d_*\partial_\nu \tilde{v}]\!]=0\big\}.\nonumber
\end{align}
Note that the norm in $\cX_\gamma $ for $\gamma=0$ is given by
\begin{equation*}
|(\tilde{v},\tilde{\rho})|_{\cX_0}
=|\tilde{v}|_{W^{2-2/p}_p}+|\tilde{\rho}|_{W^{4-3/p}_p}
+ |[\![d_*\partial_\nu \tilde{v}]\!]|_{ W^{2-6/p}_p},
\end{equation*}
while for $\gamma>0$
it is given by
$|(\tilde{v},\tilde{\rho})|_{\cX_\gamma}=|\tilde{v}|_{W^{2-2/p}_p}+|\tilde{\rho}|_{W^{4-3/p}_p}.$
\smallskip\\
Is should be observed that $\tilde Z_\gamma$ is a linear space. The parametrization
of $\cSM_\gamma $ over the tangent space $\tilde Z_\gamma$ will facilitate the use of maximal regularity results for the stability/instability analysis. 
\smallskip
\goodbreak
\noindent
In order to determine a parameterization, we consider the linear problem
\begin{equation}
\label{lin-param}
\left\{\begin{aligned}
\kappa_*\omega v-d_*\Delta v &=0 &&\text{in}&& \Omega\setminus\Gamma_* \\
\partial_{\nu_\Omega} v&=0 &&\text{on}&&\partial \Omega\\
\mbox{}[\![v]\!]&=0 &&\text{on}&&\Gamma_*\\\
(l_*/u_*) v + \sigma A_* \rho -\gamma_* \omega\rho&=g &&\text{on}&& \Gamma_*\\
l_*\omega\rho -[\![d_*\partial_\nu v]\!]&=h &&\text{on}&&\Gamma_*.\\
\end{aligned}\right.
\end{equation}
We have the following result.
\begin{proposition}
\label{param} Suppose $p>3$, $\gamma_*\geq0$, $l_*\neq0$ in case
$\gamma_*=0$, and $\omega>0$ is sufficiently large. Then problem
\eqref{lin-param} admits a unique solution $(v,\rho)$ with
regularity
$$ v\in W^{2-2/p}_p(\Omega\setminus\Gamma_*) ,\quad \rho \in W^{4-3/p}_p(\Gamma_*)$$
if and only if the data $(g,h)$ satisfy
$$ g\in W^{2-3/p}_p(\Gamma_*),\quad h\in W^{1-3/p}_p(\Gamma_*).$$
The solution map $[(g,h)\mapsto(v,\rho)]$ is continuous in the corresponding spaces.
\end{proposition}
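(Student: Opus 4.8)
\emph{Proof proposal.} The plan is to read \eqref{lin-param} as the \emph{stationary} (parameter-elliptic) counterpart of the principal linearization \eqref{prlin} — the time derivative $\partial_t$ being replaced by the real number $\omega$ and the bulk right-hand side by $0$ — and to exploit the largeness of $\omega$ to decouple the transmission equation for $v$ from the surface equation for $\rho$. In every case the necessity of the stated regularity of $(g,h)$ is immediate from the trace theorems for $W^{2-2/p}_p(\Omega\setminus\Gamma_*)$ and $W^{4-3/p}_p(\Gamma_*)$, so only sufficiency and continuity of the solution map require an argument.

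\emph{The case $\gamma_*>0$.} Here one can follow the strategy of the proof of \thmref{linpneq0}. Assume for a moment that the trace $v_\Gamma:=v|_{\Gamma_*}$ is known. The fourth equation of \eqref{lin-param} then reads $(\sigma A_*-\gamma_*\omega)\rho=g-(l_*/u_*)v_\Gamma$. Since $A_*=\tfrac{1}{n-1}\bigl(\tfrac{n-1}{R_*^2}+\Delta_*\bigr)$ with $\Delta_*$ the negative semidefinite Laplace--Beltrami operator on $\Gamma_*$, the operator $\gamma_*\omega-\sigma A_*$ is strongly elliptic and, for $\omega$ large, boundedly invertible; by elliptic regularity $[(g,v_\Gamma)\mapsto\rho]$ is bounded into $W^{4-3/p}_p(\Gamma_*)$ when $g,v_\Gamma\in W^{2-3/p}_p(\Gamma_*)$, and $\omega(\sigma A_*-\gamma_*\omega)^{-1}$ is uniformly bounded on $W^{2-3/p}_p(\Gamma_*)$ (it tends to $-\gamma_*^{-1}I$ as $\omega\to\infty$). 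For this $\rho$ we solve the elliptic transmission problem
\begin{equation*}
\kappa_*\omega v-d_*\Delta v=0 \ \ \text{in}\ \Omega\setminus\Gamma_*,\quad \partial_{\nu_\Omega}v=0,\quad [\![v]\!]=0,\quad -[\![d_*\partial_\nu v]\!]=h-l_*\omega\rho \ \ \text{on}\ \Gamma_*,
\end{equation*}
which for every $\omega>0$ has a unique solution $v\in W^{2-2/p}_p(\Omega\setminus\Gamma_*)$ by the standard theory of elliptic transmission problems; write $N_\omega$ for the associated Neumann-to-Dirichlet operator. As in \propref{NHlambda}, $\|N_\omega\|\to0$ as $\omega\to\infty$ in the relevant trace norms, the $L_2$-estimates there being upgraded to the $L_p$-scale by the parameter-dependent resolvent estimates for $\kappa_*\omega-d_*\Delta$ with Neumann boundary condition. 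Taking the trace of $v$ and inserting the formula for $\rho$ yields a fixed-point equation $v_\Gamma=K_\omega v_\Gamma+w$ in $W^{2-3/p}_p(\Gamma_*)$, with $w$ depending linearly and boundedly on $(g,h)$ and $K_\omega=-\tfrac{l_*^2}{u_*}N_\omega\bigl[\omega(\sigma A_*-\gamma_*\omega)^{-1}\bigr]$; since the bracket is uniformly bounded while $\|N_\omega\|\to0$, $K_\omega$ is a strict contraction for $\omega$ large, so $I-K_\omega$ is boundedly invertible and $v_\Gamma$, then $\rho$ and $v$, are recovered with continuous dependence on the data.

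\emph{The case $\gamma_*=0$, $l_*\neq0$.} Now $\sigma A_*$ has the nontrivial kernel spanned by the degree-one spherical harmonics, so $\rho$ cannot be eliminated by a bounded inverse and the bulk and surface equations must be solved simultaneously; \eqref{lin-param} is then exactly the resolvent problem $(\omega+L_0)(v,\rho)=(0,\,h/l_*)$ for the operator $L_0$ of \eqref{L-0}, but with the constraint defining $D(L_0)$ made inhomogeneous by $g$. One proceeds in two steps: first one lifts $g$, solving $\sigma A_*\rho_g=Qg$ on the orthogonal complement of the kernel ($Q$ the projection onto the range of $A_*$), substituting $\rho=\rho_g+\tilde\rho$, and absorbing the finite-dimensional remainder $(I-Q)g$ together with the kernel part of $\tilde\rho$ into a finite-rank correction; what remains is the genuine resolvent equation for $-L_0$, which is boundedly invertible from $D(L_0)$ onto $X_0$ once $\omega$ exceeds the (finite) set of eigenvalues of $-L_0$, because $-L_0$ generates an analytic $C_0$-semigroup by \thmref{MR-0}. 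Equivalently, one re-runs the localization of the proof of \thmref{linpeq0}: the model problem on the half-space has boundary symbol $s(\omega,\xi)=\omega l_*^2+\sigma_0|\xi|^2[\sqrt{\omega\kappa_1+d_1|\xi|^2}+\sqrt{\omega\kappa_2+d_2|\xi|^2}]$, which is bounded below by a positive multiple of $\omega$ uniformly in $\xi$ since $l_*\neq0$, and the parameter-dependent estimates of \cite{DPZ08} yield unique solvability of the model problem, hence of \eqref{lin-param}. The lower-order regularity $(v,\rho)\in W^{2-2/p}_p(\Omega\setminus\Gamma_*)\times W^{4-3/p}_p(\Gamma_*)$ asserted here (rather than the domain regularity $H^2_p\times W^{4-1/p}_p$) follows in either approach from the mapping properties of the solution operator together with trace theory.

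\emph{Main obstacle.} The genuine difficulty is the case $\gamma_*=0$: there $\sigma A_*$ fails to be invertible, so the clean decoupling of the first case is unavailable and one must either pass through the resolvent of $L_0$ after neutralizing the constraint inhomogeneity $g$ and the finite-dimensional kernel of $A_*$, or re-establish the complementing (Lopatinskii--Shapiro) condition for the coupled parameter-elliptic system — which is precisely what was checked in \cite[Example~3.4]{DPZ08} and used for \thmref{linpeq0}. A secondary, purely technical point common to both cases is to track the decay in $\omega$ of the Neumann-to-Dirichlet operator $N_\omega$ in the $L_p$-based trace spaces rather than in $L_2$; this follows from the standard parameter-dependent estimates for the transmission resolvent $\kappa_*\omega-d_*\Delta$.
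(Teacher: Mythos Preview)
Your proposal is correct and aligns with the paper's approach: the paper's own proof is a one-line referral — ``This purely elliptic problem can be solved in the same way as the corresponding linear parabolic problems, cf.\ Theorems~\ref{MR-0} and~\ref{MR-gamma}'' — and you have simply unpacked what that means, treating \eqref{lin-param} as the parameter-elliptic counterpart of \eqref{prlin} with $\partial_t$ replaced by $\omega$.

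The only genuine deviation is in the mechanism you use for $\gamma_*>0$: the paper's parabolic proof (\thmref{linpneq0}) invokes compactness of an embedding to conclude that $I-T$ is Fredholm of index zero and then argues injectivity, whereas you exploit the largeness of $\omega$ directly to make $K_\omega$ a strict contraction via the decay $\|N_\omega\|_{W^{2-3/p}_p\to W^{2-3/p}_p}=O(\omega^{-1/2})$. Both work here; your route is arguably cleaner for the stationary problem since it uses the free parameter $\omega$ rather than a compactness argument. For $\gamma_*=0$ your second alternative (re-running the localization and checking the Lopatinskii--Shapiro condition with the symbol $s(\omega,\xi)$) is exactly what the paper's referral intends; your first alternative via the resolvent of $L_0$ after lifting $g$ is a valid shortcut, but the finite-rank correction you sketch for the kernel component $(I-Q)g$ would need a line or two more care to close.
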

\begin{proof}
This purely elliptic problem can be solved in the same way as the
corresponding linear parabolic problems, cf. Theorems 4.3 and 4.4.
\end{proof}
For the parametrization we pick $\omega>0$ sufficiently large.
Given $\tilde z=(\tilde{v},\tilde{\rho})\in \cX_\gamma$
sufficiently small, we can solve the auxiliary problem
\begin{equation}
\label{nonlineq-param}
\left\{\begin{aligned}
\kappa_*\omega \bar{v}-d_*\Delta \bar{v}&=0 &&\text{in}&&\Omega\setminus\Gamma_*\\
\partial_{\nu_\Omega} \bar{v}&=0 &&\text{on}&&\partial \Omega\\
\mbox{}[\![\bar{v}]\!]&=0 &&\text{on}&&\Gamma_*\\\
(l_*/u_*) \bar{v} + \sigma A_* \bar{\rho}-\gamma_* \omega\bar{\rho}
&=G_*(\tilde{v}+\bar{v},\tilde{\rho}+\bar{\rho}) &&\text{on}&&\Gamma_*\\
l_*\omega \bar{\rho} -[\![d_*\partial_\nu \bar{v}]\!]&=H_*(\tilde{v}+\bar{v},\tilde{\rho}+\bar{\rho})
&&\text{on}&&\Gamma_*\\
\end{aligned}\right.
\end{equation}
by means of  the implicit function theorem, employing Proposition
\ref{param}. This yields a unique solution $\bar z=(\bar v,\bar
\rho)=\phi(\tilde z)\in W^{2-2/p}_p(\Omega\setminus\Gamma_*)\times W^{4-3/p}_p(\Gamma_*)$
with a $C^1$-function $\phi$ such that
$\phi(0)=0$ as well as $\phi^\prime(0)=0$. One readily verifies
that $z=\tilde{z}+ \phi(\tilde{z})\in \cSM_\gamma$. To prove
surjectivity of this map, given $(v,\rho)\in \cSM_\gamma$, we
solve the linear problem
\begin{equation}
\left\{\begin{aligned}
\kappa_*\omega \bar{v}-d_*\Delta \bar{v}&=0 &&\qquad\quad\text{in}&& \Omega\setminus\Gamma_*\\
\partial_{\nu_\Omega} \bar{v}&=0 &&\qquad\quad\text{on}&&\partial \Omega\\
[\![\bar{v}]\!]&=0 &&\qquad\quad\text{on}&&\Gamma_*\\
(l_*/u_*) \bar{v} + \sigma A_* \bar{\rho}-\gamma_* \omega\bar{\rho}&=G_*(v,\rho) &&\qquad\quad\text{on}&&\Gamma_*\\
l_*\omega \bar{\rho} -[\![d_*\partial_\nu \bar{v}]\!]&=H_*(v,\rho)&&\qquad\quad\text{on}&&\Gamma_*\\
\end{aligned}\right.
\end{equation}
and set $\tilde{z}=z-\bar{z}$. Then $\tilde z\in\tilde Z_\gamma$ and $\bar{z}=\phi(\tilde{z})$,
hence the map $[\tilde{z}\mapsto \tilde{z}+\phi(\tilde{z})]$ is also surjective near $0$.
We have, thus, obtained a local parametrization of $\cSM_\gamma$ near zero over the
tangent space $\tilde Z_\gamma$.
\medskip\\
\noindent
Next we derive a similar decomposition for the solutions of
problem \eqref{nonlineq0}. Let $z_0=(\tilde z_0,\phi(\tilde
z_0))\in\cSM_\gamma$ be given and let $z\in\EE(a)$, where we set
\begin{equation}
\label{EE(a)}
\EE(a):=\EE([0,a]),
\end{equation}
be the solution of \eqref{nonlineq0} with initial value $z_0$. Then we would like
to devise a decomposition $z=z_\infty\! +\tilde{z}+\bar{z}$, where
$\tilde{z}(t)\in \tilde Z_\gamma$ for all $t\in [0,a]$, and where
$z_\infty=\tilde z_\infty\! +\phi(\tilde z_\infty)$ is an
equilibrium for \eqref{nonlineq0}. In order to achieve this, we
consider the coupled systems of equations
\begin{equation}
\label{nonlin-param}
\left\{\begin{aligned}
\kappa_*\omega\bar{v}+\kappa_*\partial_t\bar{v}-d_*\Delta \bar{v}&=F_*(z_\infty+\tilde z+\bar z)-F_*(z_\infty) \\
\partial_{\nu_\Omega} \bar{v}& =0 \\
[\![\bar{v}]\!]&=0 && \\
(l_*/u_*) \bar{v} + \sigma A_* \bar{\rho}-\gamma_* (\partial_t\bar{\rho} +\omega\bar{\rho})
&=G_*(z_\infty+\tilde z+\bar z)-G_*(z_\infty)\\
l_*\omega \bar{\rho}+ l_*\partial_t \bar{\rho} -[\![d_*\partial_\nu \bar{v}]\!]
&=H_*(z_\infty+\tilde z+\bar z)-H_*(z_\infty)\\
\bar{z}(0)&=\phi(\tilde z_0)-\phi(\tilde z_\infty),
\end{aligned}\right.
\end{equation}
\\
and
\\
\begin{equation}
\label{nonlin-param-red}
\left\{\begin{aligned}
\kappa_*\partial_t\tilde{v}-d_*\Delta \tilde{v}
&=\kappa_*\omega\bar{v}\\
\partial_{\nu_\Omega} \tilde{v}&=0 \\
[\![\tilde{v}]\!]&=0 \\
(l_*/u_*) \tilde{v} + \sigma A_* \tilde{\rho}-\gamma_* \partial_t\tilde{\rho}&=-\gamma_*\omega\bar{\rho}\\
l_*\partial_t \tilde{\rho} -[\![d_*\partial_\nu \tilde{v}]\!]&=l_*\omega\bar{\rho}\\
\tilde{z}(0) &=\tilde{z}_0-\tilde{z}_\infty.
\end{aligned}\right. \hspace{2.8cm}
\end{equation}
It should be mentioned that $F_*(z_\infty)=0$, as can be seen from the
equilibrium equation for \eqref{nonlineq0} and the fact that $v_\infty=$ constant
for $z_\infty=(v_\infty,\rho_\infty)$.
For reasons of symmetry and consistency we will, nevertheless, include this term.

Equations \eqref{nonlin-param}--\eqref{nonlin-param-red}
can be rewritten in the more condensed form
\begin{equation}
\label{condensed}
\begin{aligned}
\LL_{\gamma,\omega} \bar{z}&= N(z_\infty\! +\tilde{z}+\bar{z})-N(z_\infty),
\quad &&\bar{z}(0)=\phi(\tilde z_0)-\phi(\tilde{z}_\infty), \\
\dot{\tilde{z}}+L_\gamma\tilde{z}
&=\omega \bar{z},
\quad &&\tilde{z}(0)=\tilde{z}_0-\tilde{z}_\infty,
\end{aligned}
\end{equation}
where we use the abbreviation $\mathbb{L}_{\gamma,\omega}$ to denote the linear operator on
the left hand side of \eqref{nonlin-param}, and $N$ to denote the nonlinearities
on the right hand side of \eqref{nonlin-param}, respectively.
We are now ready to formulate the main theorem of this section.
\goodbreak
\begin{theorem}
\label{stability}
Suppose $\sigma>0$, $\gamma_*=\gamma(u_*)\geq0$ and $l_*=l(u_*)\neq0$ in case $\gamma_*=0$.
Then in the topology of the state manifold $\cSM_\gamma$ we have:
\begin{itemize}
\item[(a)]
 $(u_*,\Gamma_*)\in\cE$ is stable if $\Gamma_*$ is connected and $\zeta_\ast<1$.\\
 Any solution starting in a neighborhood of such a stable equilibrium exists globally and converges to another stable equilibrium
exponentially fast.
\vspace{1mm}
\item[(b)] $(u_*,\Gamma_*)\in\cE$
is unstable if  $\Gamma_*$ is disconnected or if $\zeta_\ast>1$.\\
Any solution starting and staying in a neighborhood of such an
unstable equilibrium converges to another unstable equilibrium
exponentially fast.

\end{itemize}
\end{theorem}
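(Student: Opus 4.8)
The plan is to derive both statements from the generalized principle of linearized stability — in its normally stable form for (a) and its normally hyperbolic form for (b) — following and adapting the abstract framework of \cite{PSZ09}. The starting point is the reformulation \eqref{nonlineq0} together with the decomposition \eqref{condensed}: one works in the base space $X_\gamma$ with linearization $L_\gamma$ from \eqref{L-0} resp.\ \eqref{L-gamma}, which by Theorems \ref{MR-0} and \ref{MR-gamma} generates an analytic $C_0$-semigroup with maximal $L_p$-regularity, and one uses that the nonlinearity $N$ in \eqref{condensed} is $C^1$ on the time-weighted maximal-regularity spaces of Corollary \ref{wellposed3} with $N(z_\ast)=0$, $N'(z_\ast)=0$ (recorded after \eqref{nonlineq0}). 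When $\zeta_\ast\neq1$, Theorem \ref{lin-stability}(ii),(iii),(vii) tells us that $0$ is a semisimple eigenvalue of $L_\gamma$ whose eigenspace is $N(L_\gamma)\cong T_{(u_\ast,\Gamma_\ast)}\cE$, of dimension $mn+1$; let $P_c$ denote the associated spectral projection, $P_s=I-P_c$, and $L^s_\gamma:=L_\gamma|_{\mathrm{ran}\,P_s}$, which (the spectrum being real and discrete) is invertible and sectorial. By Theorem \ref{lin-stability}(iv)--(vi), in the situation of (a) the spectrum of $L^s_\gamma$ lies in $(0,\infty)$, while in that of (b) it has $k\ge1$ points in $(-\infty,0)$ ($k=m$ if $\zeta_\ast>1$, $k=m-1$ if $\zeta_\ast\le1$) and none on the imaginary axis, i.e.\ $L^s_\gamma$ is hyperbolic.

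For part (a), $\Gamma_\ast$ is connected, so near $z_\ast=(u_\ast,\Gamma_\ast)$ the set $\cE$ is the $(n+1)$-dimensional manifold of single spheres, parametrized over $N(L_\gamma)$ exactly as in the proof of Theorem \ref{lin-stability}(vii); and $\zeta_\ast<1$ makes this the \emph{normally stable} case: $0$ semisimple, all other spectrum of $L_\gamma$ in the open right half-plane. For a solution $z$ of \eqref{nonlineq0} with initial value $z_0\in\cSM_\gamma$ close to $z_\ast$, I would seek an equilibrium $z_\infty=z_\infty(z_0)\in\cE$ and the splitting $z=z_\infty+\tilde z+\bar z$ of \eqref{condensed}, in which $\bar z$ is governed by the shifted operator $\LL_{\gamma,\omega}$ (strongly stable for $\omega$ large) and $\tilde z\in\tilde Z_\gamma$ solves $\dot{\tilde z}+L_\gamma\tilde z=\omega\bar z$; decomposing the latter by $P_c+P_s$ reduces $P_c\tilde z$ to a finite-dimensional integral equation on $N(L_\gamma)$ and leaves $P_s\tilde z$ governed by $L^s_\gamma$, hence exponentially stable. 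Since $N$ is quadratically small at $z_\ast$, a contraction-mapping argument for \eqref{condensed} in exponentially weighted maximal-regularity spaces — with the $mn+1$ free parameters of $z_\infty$ (equivalently $\tilde z_\infty\in N(L_\gamma)$) used to kill the neutral drift — should give, for $z_0$ near $z_\ast$, global existence and $\|z(t)-z_\infty\|\le Ce^{-\delta t}$ with $\delta>0$ uniform in $z_0$; this yields stability in the topology of $\cSM_\gamma$. Finally, $\zeta$ varies continuously along $\cE$ via formula \eqref{zeta} and the smooth dependence of $(u_\ast,R_\ast)$ from the proof of Theorem \ref{lin-stability}(vii), so $z_\infty$ is again a connected equilibrium with $\zeta<1$, hence stable.

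For part (b), instability of $z_\ast$ is the easier half: $L_\gamma$ has an eigenvalue with negative real part and $N'(z_\ast)=0$, so by Lyapunov's instability theorem — equivalently, via a nontrivial local unstable manifold $\mathcal{M}_u$ tangent to the corresponding $k$-dimensional eigenspace — there are initial data arbitrarily close to $z_\ast$ whose solutions leave a fixed neighborhood of $z_\ast$. For the convergence statement I would use that $\cE$ is an invariant $C^1$-manifold through $z_\ast$ of dimension $mn+1=\dim N(L_\gamma)$, tangent at $z_\ast$ to the center eigenspace, hence a local center manifold $\mathcal{M}_c$; the normally hyperbolic center/stable/unstable manifold machinery of \cite{PSZ09} then produces a locally invariant center-stable manifold $\mathcal{M}_{cs}$ of codimension $k$, which attracts at an exponential rate every solution staying near $z_\ast$ for all $t\ge0$, the reduced flow on $\mathcal{M}_{cs}$ converging exponentially to $\mathcal{M}_c=\cE$. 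Thus such a solution converges exponentially to some equilibrium $z_\infty\in\cE$; and since the number $m$ of components of $\Gamma$ is locally constant on $\cE$ and $\zeta$ is continuous, $z_\infty$ is again disconnected, resp.\ has $\zeta>1$, hence unstable.

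The main obstacle is not the abstract dynamics but its adaptation to this quasilinear two-phase problem with a dynamic interface condition. Concretely one must: set up \eqref{condensed} in the time-weighted $L_p$-maximal-regularity spaces of Corollary \ref{wellposed3}, with the two genuinely different linearizations $L_0$ on $X_0$ and $L_\gamma$ on $X_\gamma$; prove the $C^1$-property and the quadratic Lipschitz estimates for $N$ (a version of Proposition \ref{nonlinearities} localized at $z_\ast$); and carry the fixed-point construction of $z_\infty$ in (a) and the invariant-manifold construction in (b) through with constants uniform in the time-weight and in the initial data, so that the hypotheses of the abstract theorems of \cite{PSZ09} are literally verified. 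The borderline cases $\zeta_\ast=1$ — where, by Remark (d) after Theorem \ref{lin-stability}, $0$ is no longer semisimple (also when $\Gamma_\ast$ is disconnected) — are excluded by the strict inequalities in the hypotheses.
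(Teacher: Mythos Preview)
Your proposal is correct and follows essentially the same route as the paper: the decomposition \eqref{condensed}, the parametrization of $\cE$ over $N(L_\gamma)$, the spectral splitting by $P^c$ and $P^s$, and a contraction argument in exponentially weighted maximal regularity spaces on $\R_+$ to determine $z_\infty$ and prove exponential convergence in (a), then a center--stable--unstable splitting for (b). Two points of comparison are worth noting.

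First, the relevant weighted spaces are the \emph{exponentially} weighted spaces $\EE(\R_+,\delta)=\{v:e^{\delta t}v\in\EE(\R_+)\}$, not the algebraic time weights $t^{1-\mu}$ of Corollary~\ref{wellposed3}; the latter serve only to lower the regularity of initial data and play no role in the asymptotic analysis.

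Second, for the instability half of (b) the paper does not invoke an abstract unstable manifold or Lyapunov-type theorem. Instead it argues directly by contradiction: assuming $0$ is stable, one shows that every global solution with $|z(t)|\le r$ satisfies $e^{-\kappa t}z\in\EE(\R_+)$ for some $\kappa$ in the spectral gap below the smallest unstable eigenvalue, whence the unstable component $P^+\tilde z$ admits the backward-integral representation $P^+\tilde z(t)=-\int_t^\infty e^{-L_\gamma^+(t-\tau)}P^+\omega\bar z\,d\tau$. Feeding this into the estimates yields $|P^+\tilde z_0|\le C|P^-\tilde z_0|$ for all small $\tilde z_0$, an absurdity. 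This is more elementary than constructing $\mathcal M_u$ and avoids any appeal to invariant-manifold machinery; your route via \cite{PSZ09} would also work but imports more structure than is needed here. The convergence statement in (b) is then handled exactly as you suggest, by augmenting the system of (a) with an unstable component $\mathsf y_u$ determined a posteriori by the same backward integral, which pins $z_0$ to the local stable manifold of $z_\infty$.
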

\begin{proof}
{\bf (a)} \, We begin with the case that $(u_*,\Gamma_*)$ is linearly stable.
Then according to Theorem \ref{lin-stability}
we have $X_\gamma=N(L_\gamma)\oplus R(L_\gamma)$. Let $P^c$ denote the projection onto $X^c_\gamma:=N(L_\gamma)$ along $X^s_\gamma:=R(L_\gamma)$ and $P^s=I-P^c$ the complementary projection onto $R(L_\gamma)$.
We parametrize the set of equilibria $\cE$ near $0$ over $N(L_\gamma)$ via the $C^1$-map
$[\cxx\mapsto \cxx +\psi(\cxx)+\phi(\cxx+\psi(\cxx))]$ such that $\psi(0)=\psi^\prime(0)=0$ and $\phi(0)=\phi^\prime(0)=0$.
It follows from the equilibrium equation associated to
\eqref{nonlineq0} (recall that $F_\ast (z_e)$ vanishes for any equilibrium $z_e$),
and  from the definition of $\phi$
that the mapping $\psi$ is determined by the equation
\begin{equation}
\label{definition-psi}
L_\gamma^s \psi({\cxx})= P^s\omega \phi(\cxx+\psi(\cxx)), \quad {\cxx}\in B_{X^c_\gamma}(r).
\end{equation}
Since $L_\gamma^s$ is invertible on $X^s_\gamma$, $\psi\in C^1(B_{X^c_\gamma}(r),D(L^s_\gamma))$
is well-defined by the implicit function theorem and $\psi(0)=\psi^\prime(0)=0$.
\medskip\\
For ${\cxx}_\infty\in X_\gamma^c$ sufficiently small we set
$z_\infty:={\cxx}_\infty +\psi({\cxx}_\infty)+\phi({\cxx}_\infty + \psi({\cxx}_\infty))$.
Then $z_\infty$ is an equilibrium for
\eqref{nonlineq0} and we will now consider the decomposition
$z=z_\infty +\tilde{z}+\bar{z}$ introduced in \eqref{nonlin-param}--\eqref{nonlin-param-red},
or \eqref{condensed}, respectively.
With the ansatz
\begin{equation}
\label{ansatz}
\tilde{z}={\cxx}+\psi({\cxx}_\infty +{\cxx})-\psi({\cxx}_\infty)+{\cy},
\quad ({\cxx},{\cy})\in X^c_\gamma\times X^s_\gamma,
\end{equation}
for ${\cxx}, {\cxx}_\infty\in X^c_\gamma$ small enough, the second line in \eqref{condensed} becomes
\begin{equation}
\label{normalform}
\left\{
\begin{aligned}
\dot{\cxx}&= P^c \omega\bar z,\quad &&\cxx(0)=\cxx_0-{\cxx}_\infty,\\
\dot{{\cy}} + L_\gamma^s{\cy} &= S({\cxx}_\infty,\cxx,\bar z),\quad &&{\cy}(0)={\cy}_0,\\
 \end{aligned}
 \right.
\end{equation}
where
\begin{equation*}
\begin{aligned}
S({\cxx}_\infty,\cxx,\bar z)&= P^s\omega \bar z-\psi^\prime({\cxx}_\infty +{\cxx})P^c\omega\bar z
   -L^s_\gamma[\psi({\cxx}_\infty +{\cxx})-\psi({\cxx}_\infty)],\\
\end{aligned}
\end{equation*}
and
\begin{equation}
\label{tilde-z0}
\tilde{z}_0={\cxx}_0+\psi({\cxx}_0)+{\cy}_0,
\quad ({\cxx}_0,{\cy}_0)\in X^c_\gamma\times (X^s_\gamma\cap \tilde Z_\gamma).
\end{equation}
Next we show that the system of equations \eqref{normalform} admits
a unique global solution
$({\cxx}_\infty,{\cxx},{\cy})$, where the functions
$({\cxx}, {\cy})$ are exponentially decaying, provided
$\bar z$ is exponentially decaying and
$({\cxx}_0,{\cy}_0)$ is sufficiently small.
For this let us first introduce some more notation.
For $\delta\ge 0$ we set
\begin{equation*}
\begin{split}
&\EE_i(\R_+,\delta):=\{v: e^{\delta t}v\in\EE_i(\R_+)\},\quad i=1,2, \\
&\FF_j(\R_+,\delta):=\{v: e^{\delta t}v\in\FF_j(\R_+)\},\quad j=1,2,3,
\end{split}
\end{equation*}
endowed with the norms
\begin{equation*}
\Ver v\Ver _{\EE_i(\R_+,\delta)}=\Ver e^{\delta t}v\Ver_{\EE_i(\R_+)}, \\
\quad
\Ver v\Ver_{\FF_j(\R_+,\delta)}=\Ver e^{\delta t}v\Ver_{\FF_j(\R_+)}.
\end{equation*}
The spaces $\EE(\R_+,\delta)$ and $\FF(\R_+,\delta)$ are then defined
analogously as in Section 3.
We also need the space
\begin{equation}
\label{WX}
\XX(\R_+,\delta):=H^1_p(\R_+,\delta;X_\gamma)\cap L_p(\R_+,\delta;D(L_\gamma)),
\end{equation}
where $H^k_p(\R_+,\delta;E)$ denotes all functions $v:\R_+\to E$ such that
$e^{\delta t}v\in H^k_p(\R_+;E)$, with $E$ a given Banach space.
Finally, let
\begin{equation*}
\begin{split}
\BB_1(r,\delta):&=\{({\cxx}_0,{\cy}_0,\bar z)\in X^c_\gamma\times (X^s_\gamma\cap \tilde Z_\gamma)
\times \EE(\R_+,\delta):
|({\cxx}_0,{\cy}_0)|_{\tilde Z_\gamma}<r\}. \\
\end{split}
\end{equation*}
\noindent
For given $({\cxx}_0,{\cy}_0,\bar z)\in \BB_1(r_0,\delta)$,
with $r_0$ sufficiently small, we set
\begin{equation}
\label{A}
\begin{split}
{\cxx}_\infty:= &\ {\cxx_0}+\int_0^\infty P^c\omega \bar z(\tau)\,d\tau,\\
{\cxx}:= &  -\int_t^\infty P^c\omega\bar z(\tau)\,d\tau, \\
{\cy}:= & \left(\frac{d}{dt}+L^s_\gamma, \text{tr}\right)^{\!\!-1}
\big (S({\cxx}_\infty,{\cxx},\bar z),{\cy}_0\big).\\
\end{split}
\end{equation}
Here we used the notation ${\rm tr\, }w:=w(0)$.
It should be observed that the functions $({\cxx}_\infty,{\cxx})$ occurring in
the third line of \eqref{A} are defined through the first two lines in \eqref{A}.
We now set
\begin{equation}
\mathfrak{S}({\cxx}_0,{\cy}_0,\bar z):=({\cxx}_\infty,{\cxx},{\cy}),
\quad ({\cxx}_0,{\cy}_0,\bar z)\in \BB_1(r_0,\delta),
\end{equation}
where $r_0$ is chosen sufficiently small.

Next we will show that there exists a number $\delta_0>0$ such that
for any $\delta\in [0,\delta_0]$ the mapping
$\mathfrak{S}$ has the following properties:
\begin{equation}
\label{properties-Gamma}
\mathfrak{S}\in C\big(\BB_1(r_0,\delta),
X^c_\gamma\times \XX^c(\R_+,\delta)\times \XX^s(\R_+,\delta)\big),
\quad \mathfrak{S}(0)=0,
\end{equation}
where
\begin{equation*}
\begin{split}
&\XX^c(\R_+,\delta):=H^1_p(\R_+,\delta;X_\gamma^c), \quad
\XX^s(\R_+,\delta):=H^1_p(\R_+,\delta; X^s_\gamma)\cap L_p(\R_+,\delta;D(L^s_\gamma)).
\end{split}
\end{equation*}
Writing $\mathfrak{S}=(\mathfrak{S}_1,\mathfrak{S}_2,\mathfrak{S}_3)$
we readily observe that
\begin{equation}
\label{Gamma-1} \mathfrak{S}_1\in
C^\infty(\BB_1(r_0,\delta),X^c_\gamma),\quad \mathfrak{S}_1(0)=0.
\end{equation}
For $g\in L_p(\R_+,\delta; X^c_\gamma)$, let
$ (Kg)(t):=\int_t^\infty g(\tau)\,d\tau$
and note that
\begin{equation*}
e^{\delta t}(Kg)(t)=\int_t^\infty e^{\delta(t-\tau)}e^{\delta\tau}g(\tau)\,d\tau.
\end{equation*}
Young's inequality for convolution integrals readily yields
\begin{equation*}
K\in \cB\big(L_p(\R_+,\delta;X^c_\gamma),H^{1}_p(\R_+,\delta;X^c_\gamma)\big),
\end{equation*}
and this shows that $\mathfrak{S}_2\in\XX^c(\R_+,\delta)$.
Hence we have
\begin{equation}
\label{Gamma-2} \mathfrak{S}_2\in C^\infty\big(\BB_1(r_0,\delta),
\XX^c(\R_+,\delta)\big),\quad \mathfrak{S}_2(0)=0.
\end{equation}
Concerning the function $\mathfrak{S}_3$,
we know from Theorem~\ref{lin-stability}(v) that  $s(-L^s_\gamma)$, the spectral bound
of $(-L^s_\gamma)$, is negative. Fixing $\delta_0>0$ with $s(-L^s_\gamma)<-\delta_0$
it follows from semigroup theory and
the $L_p$-maximal regularity results stated in
Theorem~\ref{MR-0} and Theorem~\ref{MR-gamma} that
\begin{equation}
\begin{split}
(\frac{d}{dt}+L^s_\gamma, \text{tr})^{-1}
\in \cB\big(L_p(\R_+,\delta;X^s_\gamma)\times \tilde{Z}^s_\gamma,\, {\mathbb X^s}(\R_+,\delta)\big),
\quad \delta\in [0,\delta_0], \\
\end{split}
\end{equation}
where $\tilde{Z}^s_\gamma=X^s_\gamma\cap \tilde Z_\gamma$.
This in conjunction with \eqref{Gamma-1}--\eqref{Gamma-2} and the definition of $S$ implies
\begin{equation}
\label{Gamma-3} \mathfrak{S}_3\in C\big(\BB_1(r_0,\delta),
\XX^s(\R_+,\delta)\big),\quad \mathfrak{S}_3(0)=0.
\end{equation}
Combining \eqref{Gamma-1}--\eqref{Gamma-3} then yields
\eqref{properties-Gamma}.
\medskip\\
For  given $({\cxx}_0,{\cy}_0,\bar z)\in\BB_1(r_0,\delta)$
let  $({\cxx}_\infty,{\cxx}, {\cy})=\mathfrak{S}({\cxx}_0,{\cy}_0,\bar z)$.
Then we have
\begin{equation*}
\begin{split}
{\cxx}(t)&=-\int_t^\infty P^c\omega \bar z(\tau)\,d\tau
=-\int_0^\infty P^c\omega \bar z(\tau)\,d\tau
+ \int_0^t P^c\omega \bar z(\tau)\,d\tau \\
&={\cxx}_0-{\cxx}_\infty + \int_0^t P^c\omega \bar z(\tau)\,d\tau,
\end{split}
\end{equation*}
thus showing that
${\cxx}$ solves the first equation in \eqref{normalform}.
In summary, we have shown that $({\cxx}_\infty,{\cxx}, {\cy})=\mathfrak{S}({\cxx}_0,{\cy}_0,\bar z)$
is for every $({\cxx}_0,{\cy}_0,\bar z)\in \BB_1(r_0,\delta)$
the unique solution of \eqref{normalform} in $X^c_\gamma\times \XX^c(\R_+,\delta)\times\XX^s(\R_+,\delta)$,
where $\delta\in [0,\delta_0]$.
\\
Setting
\begin{equation}
\label{definition-tilde-Z}
\begin{split}
\tilde z &=\tilde{\mathfrak{Z}}({\cxx}_0,{\cy}_0,\bar z):=
{\cxx}+\psi({\cxx}_\infty +{\cxx})-\psi({\cxx}_\infty)+{\cy},\\
z_\infty &=\mathfrak{Z}_\infty({\cxx}_0,{\cy}_0,\bar z):={\cxx}_\infty+\psi({\cxx}_\infty)+\phi({\cxx}_\infty+\psi({\cxx}_\infty))
\end{split}
\end{equation}
for $({\cxx}_\infty,{\cxx}, {\cy})=\mathfrak{S}({\cxx}_0,{\cy}_0,\bar z)$, we see that
\begin{equation*}
\tilde{\mathfrak{Z}}\in C(\BB_1(r_0,\delta), \XX(\R_+,\delta)),
\quad \tilde{\mathfrak{Z}}(0)=0,
\end{equation*}
and
\begin{equation}
\label{regularity-Z-infty}
\mathfrak{Z}_\infty\in C(\BB_1(r_0,\delta), Z_\infty),
\quad \mathfrak{Z}_\infty(0)=0,
\end{equation}
where $Z_\infty=[W^2_p(\Omega\setminus\Gamma_*)\cap C(\bar\Omega)]\times W^{4-1/p}_p(\Gamma_*)$.
It then follows from the derivation of
\eqref{ansatz}--\eqref{normalform} that
\begin{equation*}
(z_\infty,\tilde z)=({\mathfrak Z}({\cxx}_0,{\cy}_0,\bar z),\tilde{\mathfrak Z}({\cxx}_0,{\cy}_0,\bar z))
\end{equation*}
is for every given
$({\cxx}_0,{\cy}_0,\bar z)\in\BB_1(r_0,\delta)$ the unique (global) solution of \eqref{nonlin-param-red}
with $\tilde z$ in the regularity class $\XX(\R_+,\delta)$.
In a next step we shall show that
$\tilde z$ in fact has better regularity properties, namely
\begin{equation}
\label{regularity-tilde-Z}
\tilde{\mathfrak{Z}}\in C(\BB_1(r_0,\delta), \EE(\R_+,\delta)),
\quad \tilde{\mathfrak{Z}}(0)=0.
\end{equation}
In order to see this, let us first consider the case $\gamma\equiv 0$
(which implies $\gamma_\ast=0$).
From the fourth line of \eqref{nonlin-param-red}, the fact that
$\tilde z\in \XX(\R_+,\delta)$, and
\begin{equation*}
[v\mapsto v|_{\Gamma_\ast}]\in
\cB(\XX(\R_+,\delta), W^{1-1/2p}_p(\R_+,\delta;L_p(\Gamma_*)))
\end{equation*}
follows
\begin{equation*}
\tilde\rho=(\mu-\sigma A_*)^{-1}((l_*/u_*)\tilde
v+\mu\tilde\rho)\in W^{1-1/2p}_p(\R_+,\delta;H^2_p(\Gamma_*)),
\end{equation*}
where $\mu$ is in the resolvent set of $\sigma A_*$.
From the fifth line of \eqref{nonlin-param-red},
the fact that $(\tilde z,\bar z)\in \XX(\R_+,\delta)\times\EE(\R_+,\delta)$,
and trace theory for $\tilde v$
follows
\begin{equation*}
l_*\partial_t \tilde{\rho} =[\![d_*\partial_\nu \tilde{v}]\!]+l_*\omega\bar{\rho}
\in W^{1/2-1/2p}_p(\R_+,\delta;L_p(\Gamma_*)),
\end{equation*}
implying that $\tilde\rho\in W^{3/2-1/2p}_p(\R_+,\delta;L_p(\Gamma_*))$.
Hence \eqref{regularity-tilde-Z} holds for $\gamma=0$.
\medskip\\
If $\gamma>0$ (and thus $\gamma_*>0$) we use the embedding
\begin{equation*}
H^1_p(\R_+,\delta;W^{2-1/p}_p(\Gamma_*))\cap
L_p(\R_+,\delta;W^{4-1/p}_p(\Gamma_\ast)) \ein
W^{1-1/2p}_p(\R_+,\delta;H^2_p(\Gamma_*))
\end{equation*}
and the fourth equation in \eqref{nonlin-param-red} to conclude that
$\tilde\rho\in W^{2-1/2p}_p(\R_+,\delta;L_p(\Gamma_*))$.
Hence \eqref{regularity-tilde-Z} holds in this case as well.
\medskip\\
Let us now turn our attention to equation \eqref{nonlin-param}, or equivalently, the
first line of \eqref{condensed}.
In a similar way as in the proof of \cite[Proposition~10]{LPS06}
(extra consideration is needed in order to deal with the additional terms involving $z_\infty$)
one verifies that the  mapping
\begin{equation*}
[(z_\infty,z)\mapsto N(z_\infty+z)-N(z_\infty)]:
\mathbb U(\delta) \to \FF(\R_+,\delta)
\end{equation*}
is $C^1$ and vanishes together with its Fr\'echet derivative at $(0,0)$.
Here $\mathbb U(\delta)$ denotes an open neighborhood of $(0,0)$ in
$Z_\infty\times\EE(\R_+,\delta)$.
Let
\begin{equation*}
\BB(r,\delta)=\!\{({\cxx}_0,{\cy}_0,\bar z)\in X^c_\gamma\times (X^s_\gamma\cap \tilde Z_\gamma)
\times \EE(\R_+,\delta):
|({\cxx}_0,{\cy}_0,\bar z)|_{[\tilde Z_\gamma]^2\times \EE(\R_+,\delta)}<r_0\}, \\
\end{equation*}
and let
$
{\rm ext}_\delta\in
\cB\big(W^{2-2/p}_p(\Omega\setminus\Gamma_*)\cap C(\overline\Omega))\times W^{4-3/p}_p(\Gamma_*),
\EE(\R_+,\delta)\big)
$
be an appropriate extension operator with $({\rm ext}_\delta w_0)(0)=w_0$.
\smallskip\\
For $({\cxx}_0,{\cy}_0,\bar z)\in \BB(r_0,\delta)$, with $r_0$ sufficiently small,
we define
\begin{equation*}
M({\cxx}_0,{\cy}_0,\bar z):=N(z_\infty+\tilde z
+{\rm ext}_\delta (\phi(\tilde z_0)-\phi(\tilde z_\infty)-\bar z(0))+\bar z)-N(z_\infty).
\end{equation*}
It follows from
\eqref{definition-tilde-Z}-\eqref{regularity-tilde-Z} that
$M\in C(\BB(r_0,\delta),\FF(\R_+,\delta))$, $M(0,0,0)=0$,
and $D_3M(0,0,0)=0$.
Moreover,
\begin{equation}
\label{properties-M-2}
M({\cxx}_0,{\cy}_0,\bar z)(0)=N(z_0)-N(z_\infty),
\quad ({\cxx}_0,{\cy}_0,\bar z)\in \BB(r_0,\delta),
\end{equation}
where we recall that $\tilde z(0)=\tilde z_0-\tilde z_\infty$,
$z_0=\tilde{z}_0+\phi(\tilde{z}_0)$,
and $z_\infty=\tilde{z}_\infty+\phi(\tilde{z}_\infty)$.

Finally, for $({\cxx}_0,{\cy}_0,\bar z)\in \BB(r_0,\delta)$ let
\begin{equation}
\label{definition-K}
K({\cxx}_0,{\cy}_0,\bar z):=(\LL_{\gamma,\omega},{\rm tr})^{-1}
(M({\cxx}_0,{\cy}_0,\bar z),\phi(\tilde z_0)-\phi(\tilde z_\infty)).
\end{equation}
It follows from \eqref{properties-M-2} and the definition of $\phi$ that the functions
$$(M({\cxx}_0,{\cy}_0,\bar z),\phi(\tilde z_0)-\phi(\tilde z_\infty))$$
satisfy the necessary compatibility conditions, whenever
$({\cxx}_0,{\cy}_0,\bar z)\in \BB(r_0,\delta)$. Slight
modifications of the results in \cite{DPZ08} then imply that
$K:\BB(r_0,\delta)\to \EE(\R_+,\delta)$ is well-defined, provided
$\omega$ is large enough (and $\delta$ is in $[0,\delta_0]$ with
$\delta_0$ as above). From the properties of the mappings $N$,
$\psi$ and $\phi$, the definition of $\tilde z_0$ and $\tilde
z_\infty$ (recall that $\tilde
z_0={\cxx}_0+\psi({\cxx}_0)+{\cy}_0$, $\tilde
z_\infty={\cxx}_\infty+\psi({\cxx}_\infty)$), and the contraction
mapping theorem follows that $K$, defined in
\eqref{definition-K}, has for each $({\cxx}_0,{\cy}_0)$
sufficiently small a unique fixed point
\begin{equation*}
\bar z=\bar z({\cxx}_0,{\cy}_0)\in \EE(\R_+,\delta),
\end{equation*}
and that the mapping $[({\cxx}_0,{\cy}_0)\mapsto \bar z({\cxx}_0,{\cy}_0)]$
is continuous and vanishes at $(0,0)$.
By construction it follows that $\bar z=\bar z({\cxx}_0,{\cy}_0)$
solves
\begin{equation*}
\LL_{\gamma,\omega}\bar z=N(z_\infty+\tilde z+\bar z)-N(z_\infty),
\quad
\bar z(0)=\phi(\tilde z_0)-\phi(\tilde z_\infty).
\end{equation*}
In summary, we have shown that for each $z_0\in \cSM_\gamma$ small enough,
there exists
\begin{equation*}
(z_\infty,\tilde z,\bar z)\in Z_\infty\times \EE(\R_+,\delta)\times\EE(\R_+,\delta)
\end{equation*}
such that
$z=z_\infty+\tilde z+\bar z$ is the unique global solution of \eqref{nonlineq0}.
In particular we have shown that for every $z_0\in \cSM_\gamma$ small enough
there exists a unique equilibrium $z_\infty=z_\infty(z_0)$ such that
the solution of \eqref{nonlineq0} exists for all $t\ge 0$ and converges to
$z_\infty$ in $\cSM_\gamma$ at an exponential rate.
\noindent
\bigskip\\
{\bf (b)} Now we consider the linearly unstable case
and we first show that the equilibrium~$0$ is unstable for the nonlinear equation \eqref{nonlineq0}.
Using the same notation as in part (a) we consider the
system of equations
\begin{equation}
\label{condensed-noequilibrium}
\begin{aligned}
\LL_{\gamma,\omega}\bar{z}&= N(\tilde z+\bar z),\quad && \bar{z}(0) =\phi(\tilde z_0),\\
\dot{\tilde z} + L_\gamma\tilde z &= \omega \bar z,\quad &&\tilde z(0)  =\tilde z_0.\\
  \end{aligned}
\end{equation}
Given $\alpha\in\R$ one verifies (by similar considerations as in \cite[Proposition 10]{LPS06}) that
there is a nondecreasing function
 $\eta: \R_+\to \R_+$ such that $\eta(r)\to 0$ as $r\to 0$ and
\begin{equation}
\label{NR-estimates}
\Ver e^{\alpha t}N(z)\Ver_{\FF(a)}\le \eta(r) \Ver e^{\alpha t}z\Ver_{\EE(a)},
\quad e^{\alpha t}z\in\EE(a),
\end{equation}
whenever $|z(t)|_{Z_\gamma}\le r$ for $0\le t\le a$.
Here $a>0$ is an arbitrary fixed number,
$\EE(a):=\EE([0,a])$ and $\FF(a):=\FF([0,a])$.
For later use we note that
\begin{equation}
\label{einbettung}
 \EE(a)\hookrightarrow L_p([0,a];X_\gamma),
\end{equation}
where the embedding constant is independent of $a$.
\medskip\\
Let $\sigma^{+}$ be the collection of all positive eigenvalues of $(-L_\gamma)$.
and let $P^{+}$ be the spectral projection related to the spectral set $\sigma^{+}$.
Additionally, let $P^{-}:=I-P^{+}$ and $X^{\pm}_\gamma:=P^{\pm}(X_\gamma)$.
Then $X^+_\gamma$ is finite dimensional and we obtain the decomposition
\begin{equation*}
X=X^{+}_\gamma\oplus X^{-}_\gamma,\quad L_\gamma=L^{+}_\gamma\oplus L^{-}_\gamma.
\end{equation*}
We note that $\sigma(-L^{+}_\gamma)=\sigma^{+}$ and $\sigma(-L^{-}_\gamma)\subset [\text{Re}\,z\le 0]$,
where $\sigma(-L^{\pm}_\gamma)$ denotes the spectrum of $(-L^{\pm}_\gamma)$, respectively.
Let $\lambda_\ast$ be the smallest positive eigenvalue of $(-L^{+}_\gamma)$
and choose positive numbers
$\kappa,\mu$ such that
$[\kappa-\mu,\kappa+\mu]\subset (0,\lambda_\ast)$.
We remind that the spectrum of $(-L_\gamma)$ consists of real eigenvalues,
so that the strip $[\kappa-\mu\le\text{Re}\,z\le \kappa+\mu]$
does not contain any spectral values of $(-L_\gamma)$.
Therefore, t here exists a constant $M\ge 1$ such that
\begin{equation}
\label{estimate-SG}
|e^{-L^-_\gamma t}|\le Me^{(\kappa-\mu)t},\quad
|e^{L^+_\gamma t}|\le Me^{-(\kappa+\mu)t},\quad t\ge 0.
\end{equation}
\\
Suppose now, by contradiction, that the equilibrium $0$ is stable for \eqref{nonlineq0}.
Then for every $r>0$ there is a number $\delta>0$ such that
\eqref{nonlineq0} admits a global solution $z\in\EE(\R_+)$ with $|z(t)|\le r$ for all $t\ge 0$
whenever $z_0\in \bar B_\delta(0)$.
\\
In the following we will use the decomposition $z=\tilde z+\bar z$,
where $(\tilde z,\bar z)$ is the solution of the linear system
\eqref{condensed-noequilibrium}. (The function $z=\tilde z+\bar z$ is known,
so that the first equation has a unique solution $\bar z$. With $\bar z$ determined,
$\tilde z=z-\bar z$ is the unique solution of the second equation.)
The functions $P^\pm\tilde z$ satisfy
\begin{equation}
\label{pm-equations}
\frac{d}{dt}P^{\pm}\tilde z+L^\pm_\gamma P^\pm\tilde z=P^\pm \omega \bar z,
\quad P^\pm \tilde z(0)=P^\pm \tilde z_0.
\end{equation}
Next we shall show that $P^+\tilde z$ is given by the formula
\begin{equation}
\label{P-plus-formula}
P^+\tilde z(t)=-\int_t^\infty e^{-L^+_\gamma(t-\tau)}P^+\omega \bar z\,d\tau,
\quad t\ge 0.
\end{equation}
Given any $a>0$ it follows from $|P^+\tilde z(t)|_{X^+_\gamma}\le r$ that
\begin{equation}
\label{r-P-plus}
\Ver e^{-\kappa t}P^+\tilde z\Ver_{L_p([0,a];X^+_\gamma)}
\le r\Big(\int_0^a e^{-\kappa pt}\,dt\Big)^{1/p}\le  C(\kappa,p) r.
\end{equation}
From the relation
\begin{equation}
\label{d-dt-plus}
\frac{d}{dt}e^{-\kappa t}P^+\tilde z=
 (-\kappa -L^+_\gamma)e^{-\kappa t}P^+\tilde z
+ e^{-\kappa t}P^+ \omega \bar z,
\end{equation}
\eqref{r-P-plus}-\eqref{d-dt-plus} and \eqref{einbettung} follows
\begin{equation}
\label{P-plus-estimate}
\Ver e^{-\kappa t}P^+\tilde z\Ver_{\XX(a)}
\le C_1\big(r + \Ver e^{-\kappa t}\bar z\Ver_{\EE(a)}\big),
\end{equation}
with a universal constant $C_1$.
Here $\XX(a)$ is defined as in \eqref{WX}, with the difference that
$\R_+$ is replaced by the interval $[0,a]$ and $\delta=0$.
We also recall that $X^+_\gamma$ is finite dimensional,
so that the spaces $X^+_\gamma$ and $D(L^+_\gamma)$ coincide
(and therefore carry equivalent norms).
From semigroup theory, maximal regularity, \eqref{estimate-SG}-\eqref{pm-equations}
and \eqref{einbettung} follows
\begin{equation}
\label{P-minus-estimate}
\begin{split}
\Ver e^{-\kappa t}P^-\tilde z\Ver_{\XX(a)}
  &\le M\big(|P^-\tilde z_0|+\Ver e^{-\kappa t}P^-\omega \bar z\Ver_{L_p([0,a];X_\gamma)}\big) \\
& \le M \big(|P^-\tilde z_0|+ C_2 \Ver e^{-\kappa t}\bar z\Ver_{\EE(a)}\big).
\end{split}
\end{equation}
Combining \eqref{P-plus-estimate}-\eqref{P-minus-estimate} results in
\begin{equation}
\label{z-estimate}
\Ver e^{-\kappa t}\tilde z\Ver_{\XX(a)}
\le C_3\big(r +|P^-\tilde z_0|+\Ver e^{-\kappa t}\bar z\Ver_{\EE(a)}\big),
\end{equation}
where $C_3$ is a universal constant.
Similarly as in part (a) we  can infer from the equation for $\tilde z$ that
\begin{equation}
\label{z-tilde-inEE}
\Ver e^{-\kappa t}\tilde z\Ver_{\EE(a)}
\le c(\Ver e^{-\kappa t}\tilde z\Ver_{\XX(a)}+\Ver e^{-\kappa t}\bar z\Ver_{\EE(a)}),
\end{equation}
and this implies
\begin{equation}
\label{tilde-z-estimate}
\Ver e^{-\kappa t}\tilde z\Ver_{\EE(a)}
\le  C_4\big(r+ |P^-\tilde z_0| + \Ver e^{-\kappa t}\bar z\Ver_{\EE(a)}\big)
\end{equation}
with $C_4=c(1+C_3)$.
On the other hand we obtain from the equation for $\bar z$ and \eqref{NR-estimates}
\begin{equation*}
\begin{split}
\Ver e^{-\kappa t}\bar z\Ver_{\EE(a)}
&\le \bar C\big(|\phi(\tilde z_0)|+\Ver e^{-\kappa t}N(\tilde z+\bar z)\Ver_{\EE(a)}\big) \\
&\le \bar C\big( |\phi(\tilde z_0)|
+\eta(r)(\Ver e^{-\kappa t}\tilde z\Ver_{\EE(a)}+\Ver e^{-\kappa t}\bar z\Ver_{\EE(a)})\big).
\end{split}
\end{equation*}
If $r$ is chosen small enough such that $\bar C\eta(r)\le 1/2$ then
\begin{equation}
\label{bar-z}
\Ver e^{-\kappa t}\bar z\Ver_{\EE(a)}
\le 2\bar C\big(|\phi(\tilde z_0)|+\eta(r)\Ver e^{-\kappa t}\tilde z\Ver_{\EE(a)}\big).
\end{equation}
We can, at last, combine \eqref{tilde-z-estimate}--\eqref{bar-z}
to the result
\begin{equation}
\label{combined-estimate} \Ver e^{-\kappa t}\tilde
z\Ver_{\EE(a)}+\Ver e^{-\kappa t}\bar z\Ver_{\EE(a)} \le C_5
\big(r+|P^-\tilde z_0|+ |\phi(\tilde z_0)|\big),
\end{equation}
provided $r$ is chosen small enough so that $2(1+C_4)\bar C\eta(r)\le 1/2$.
Since all estimates are independent of $a$ we conclude that
$e^{-\kappa t}z\in\EE(\R_+)$.
From  \eqref{combined-estimate}
and H\"older's inequality follows
\begin{equation*}
\begin{split}
 &e^{-\kappa t}\int_t^\infty |e^{-L^+_\gamma(t-\tau)}P^+\omega \bar z(\tau)|_{X^+_\gamma}\,d\tau \\
&\le M\Big(\int_t^\infty e^{\mu p^\prime\, (t-\tau)}\,d\tau\Big)^{1/p'}
 \Ver e^{-\kappa \tau}\omega\bar z\Ver_{L_p(\R_+:X_\gamma)}
\le C\Ver e^{-\kappa t}\bar z\Ver_{\EE(\R_+)} <\infty,
\end{split}
\end{equation*}
thus showing that the integral $\int_t^\infty e^{-L^+_\gamma(t-\tau)}P^+\omega \bar z\,d\tau$
exists in $X^+_\gamma$ for every $t\ge 0$. Moreover, its norm in
$X^+_\gamma$ grows no faster than the exponential function
$e^{\kappa t }$.
It follows from the variation of parameters formula that
\begin{equation*}
\label{plus-unstable}
\begin{aligned}
e^{L^+_\gamma t}
\big(P^+\tilde z(t)
+ \int_t^\infty e^{-L^+_\gamma(t-\tau)} P^+\omega \bar z(\tau)\,d\tau\big)
=P^+\tilde z_0
+\int_0^\infty e^{L^+_\gamma \tau}P^+\omega \bar z(\tau)\,d\tau,
\end{aligned}
\end{equation*}
and the estimate
\begin{equation*}
\big|e^{L^+_\gamma t}
(P^+\tilde z(t)
+ \int_t^\infty\!\! e^{-L^+_\gamma(t-\tau)} P^+\omega \bar z\,d\tau)\big|_{X^+_\gamma}
\!\le Me^{-({\kappa+\mu})t}(r+Ce^{\kappa t}),\quad t\ge 0,
\end{equation*}
then shows that
$P^+\tilde z_0 +\int_0^\infty e^{L^+_\gamma \tau}P^+ \omega \bar z\,d\tau=0$.
Thus the representation \eqref{P-plus-formula} holds as claimed.
With this established, we obtain from Young's inequality for convolution integrals
\begin{equation*}
\begin{split}
\Ver e^{-\kappa t}P^+\tilde z(t)\Ver_{L_p(\R_+,X^+_\gamma)}
\le   {M}{\mu^{-1}}\Ver e^{-\kappa t}P^+\omega \bar z\Ver_{L_p(\R_+;X^+_\gamma)}.
\end{split}
\end{equation*}
It then follows from \eqref{einbettung} and \eqref{d-dt-plus}  that
\begin{equation}
\begin{split}
\Ver e^{-\kappa t}P^+\tilde z\Ver_{\XX(\R_+)}
\le C\Ver e^{-\kappa t}\bar z\Ver_{\EE(\R_+)}.
\end{split}
\end{equation}
We can now imitate the estimates in \eqref{P-minus-estimate}-\eqref{bar-z},
with the interval $[0,a]$ replaced by $\R_+$, to conclude that
\begin{equation}
\label{combined-estimate-II}
\Ver e^{-\kappa t}\tilde z\Ver_{\EE(\R_+)}+\Ver e^{-\kappa t}\bar z\Ver_{\EE(\R_+)}
\le C_6 \big(|P^-\tilde z_0|+ |\phi(\tilde z_0)|\big).
\end{equation}
This, in combination with
\eqref {estimate-SG}, \eqref{P-plus-formula}, and
H\"older's inequality, yields the estimate
\begin{equation*}
\begin{split}
|P^+\tilde z_0|_{X^+_\gamma}
&\le M \int_0^\infty e^{-\mu\tau}|e^{-\kappa \tau}P^+\omega\bar z|_{X^+_\gamma}\,d\tau \\
&\le C
\Ver e^{-\kappa t}P^+\omega\bar z\Ver_{L_p(\R_+;X^+_\gamma)}
\le C\big(|P^-\tilde z_0|+|\phi(\tilde z_0)|\big).
\end{split}
\end{equation*}
By decreasing $\delta$ if necessary, we can assume that
$C|\phi(\tilde z_0)|\le 1/2(|P^+\tilde z_0|+|P^-\tilde z_0|)$
for all $\tilde z_0\in B_\delta(0)$.
(Recall that $\phi(0)=\phi^\prime (0)=0$.)
Hence
\begin{equation}
\label{instability}
|P^+\tilde z_0|_{\tilde Z_\gamma}\le C_7|P^-\tilde z_0|_{\tilde Z_\gamma},
\quad \tilde z_0\in B_\delta(0),
\end{equation}
with a uniform constant $C_7$, and this shows that $0$ cannot be stable for \eqref{nonlineq0}.
\medskip\\
It remains to show the last assertion of Theorem~\ref{stability}(b).
For this we consider the projection $P^u=I-P^c-P^s$ which projects onto
$X^u_\gamma$, the unstable subspace of $X_\gamma$
associated with the (finitely many) unstable eigenvalues.
As in part (a) we will
show that there exists an equilibrium $z_\infty$ such that
any solution that stays in a small neighborhood of $0$ converges to $z_\infty=z_\infty(z_0)$
exponentially fast as $t\to\infty$.
Using the decomposition ${\cy}={\cy}_s+{\cy}_u$, we obtain  as in (a)
the following system of equations:
\begin{equation}
\label{normalform-unstable}
\left\{
\begin{aligned}
\dot{\cxx}&= P^c\omega \bar z, \quad &&\cxx(0)=\cxx_0-{\cxx}_\infty,\\
\dot{{\cy}}_s + L_\gamma^s\,{\cy}_s &= S_s({\cxx}_\infty,\cxx,\bar z),\quad &&{\cy}_s(0)={\cy}_0^s,\\
\dot{{\cy}}_u + L_\gamma^u\,{\cy}_u &= S_u({\cxx}_\infty,\cxx,\bar z),\quad &&{\cy}_u(0)={\cy}_0^u,\\
\end{aligned}
\right.
\end{equation}
with
\begin{equation*}
\begin{aligned}
S_j({\cxx}_\infty,\cxx,\bar z)&= P^j\omega \bar{z}-\psi_j^\prime({\cxx}_\infty\! +{\cxx})P^c\omega\bar z
   -L^j_\gamma[\psi_j({\cxx}_\infty\! +{\cxx})-\psi_j({\cxx}_\infty)],
\end{aligned}
\end{equation*}
where $j\in\{s,u\}$, and where the functions $\psi_j$ are defined
similarly as in \eqref{definition-psi}.
\noindent
Suppose we have a global solution $z\in \EE(\R_+)$ of \eqref{nonlineq0}
with $z(0)=z_0\in\cSM_\gamma$
which satisfies $|z|_{\cX_\gamma}\leq r$, where $r>0$ is sufficiently small.
By similar arguments as above 
(the presence of the function $S_u$ does not cause any principal difficulties) 
we infer that
\begin{equation}
\label{y-unstable}
{\cy}_u(t)= -\int_t^\infty e^{-L^u_\gamma(t-\tau)}
S_u({\cxx}_\infty,\cxx,\bar z)\,d\tau, \quad t\ge 0.
\end{equation}
For $({\cxx}_0,{\cy}^s_0,\bar z)\in \BB_1(r_0,\delta)$, with $r_0$ sufficiently small,
we set
\begin{equation}
\label{B}
\begin{split}
{\cxx}_\infty:= &\ {\cxx_0}+\int_0^\infty P^c\omega \bar z(\tau)\,d\tau,\\
{\cxx}(t):= &  -\int_t^\infty P^c\omega\bar z(\tau)\,d\tau, \\
{\cy}_s:= & \,\left(\frac{d}{dt}+L^s_\gamma, \text{tr}\right)^{\!\!-1}
\big (S_s({\cxx}_\infty,{\cxx},\bar z),{\cy}^s_0\big),\\
{\cy}_u(t):=& -\int_t^\infty e^{-L^u_\gamma(t-\tau)}
S_u({\cxx}_\infty,\cxx,\bar z)\,d\tau.
\end{split}
\end{equation}
As in part (a) we conclude that
\eqref{B} admits for each
$({\cxx}_0,{\cy}^s_0,\bar z)\in \BB_1(r_0,\delta)$, with $r_0$ sufficiently small, a unique solution
\begin{equation*}
({\cxx}_\infty,{\cxx}, {\cy}_s,{\cy}_u)=
\mathfrak{S}({\cxx}_0,{\cy}^s_0,\bar z)
\in  X^c_\gamma\times \XX^c(\R_+,\delta)\times\XX^s(\R_+,\delta)\times\XX^u(\R_+,\delta).
\end{equation*}
Following the arguments of part (a) then renders a solution
\begin{equation*}
\begin{split}
&\mathfrak{Z}({\cxx}_0,{\cy}^s_0)=z_\infty\! +{\cxx}+\psi({\cxx}+{\cxx}_\infty)-\psi({\cxx}_\infty)+{\cy}_s+{\cy}_u+\bar z\\
\end{split}
\end{equation*}
of \eqref{nonlineq0}
with $z_0={\cxx}_0+\psi({\cxx}_0)+{\cy}^u_0+{\cy}^s_0+
\phi({\cxx}_0+\psi({\cxx}_0)+{\cy}^u_0+{\cy}^s_0)$,
where ${\cy}^u_0$ is determined by
\begin{equation}
\label{stable-manifold}
\begin{split}
&{\cy}^u_0=-\int_0^\infty e^{L^u_\gamma\,\tau}
    S_u({\cxx}_\infty,\cxx,\bar z)\,d\tau .\\
\end{split}
\end{equation}
The solution $\mathfrak{Z}({\cxx}_0,{\cy}^s_0)$ converges exponentially
fast toward the equilibrium $z_\infty$.
In addition, we have shown that the initial value $z_0$ necessarily lies
on the stable manifold belonging to $z_\infty$,
determined by the relation \eqref{stable-manifold}.

Due to uniqueness of (local) solutions to \eqref{nonlineq0}, the
solution $\mathfrak{Z}({\cxx}_0,{\cy}^s_0)$ coincides with the given
global solution $z$, and the proof of part (b) is now complete.
\end{proof}
{\bf Global existence and convergence.} There are several
obstructions against global existence for the Stefan problem
\eqref{stefan}:
\begin{itemize}
\item {\em regularity}: the norms of either $u(t)$, $\Gamma(t)$,
and in addition  $[\![d\partial_\nu u(t)]\!]$ in case $\gamma\equiv0$, become unbounded;
\item {\em well-posedness}: in case $\gamma\equiv0$
the well-posedness condition $l(u)\neq0$ may become violated; or
$u$ may become $0$;
\item {\em geometry}: the topology of the interface changes;\\
    or the interface touches the boundary of $\Omega$;\\
    or the interface contracts to a point.
\end{itemize}
Note that  the compatibility conditions  $[\![\psi(u)]\!]+\sigma \cH=0$ in case $\gamma\equiv0$, and
$$(l(u)-[\![\psi(u)]\!]-\sigma \cH)([\![\psi(u)]\!]+\sigma \cH) =\gamma(u)[\![d\partial_\nu u]\!] $$
in case $\gamma>0$ are preserved by the semiflow.
\\
\goodbreak
\noindent
Let $(u,\Gamma)$ be a solution in the state manifold $\cSM_\gamma$.
By a {\em uniform ball condition} we mean the existence of a radius
$r_0>0$ such that for each $t$, at each point $x\in\Gamma(t)$ there
exist centers $x_i\in \Omega_i(t)$ such that $B_{r_0}(x_i)\subset
\Omega_i$ and $\Gamma(t)\cap \bar{B}_{r_0}(x_i)=\{x\}$, $i=1,2$.
Note that this condition bounds the curvature of $\Gamma(t)$,
prevents it from shrinking to a point, from touching the outer
boundary $\partial \Omega$, and from undergoing topological changes.

With this property, combining the semiflow for (\ref{stefan}) with
the Lyapunov functional and compactness we obtain the following
result.
\begin{theorem}
\label{Qual}
Let $p>n+2$, $\sigma>0$, suppose $\psi,\gamma\in C^3(0,\infty)$, $d\in C^2(0,\infty)$ such that
either $\gamma\equiv0$ or $\gamma(u)>0$ on $(0,\infty)$, and assume
$$\kappa(u)=-u\psi^{\prime\prime}(u)>0,\quad d(u)>0,\quad u\in(0,\infty).$$
Suppose that $(u,\Gamma)$ is a solution of
(\ref{stefan}) in the state manifold $\cSM_\gamma$ on its maximal time interval $[0,t_*)$.
Assume the following on $[0,t_*)$:
\begin{itemize}
\item[(i)] $|u(t)|_{W^{2-2/p}_p}+|\Gamma(t)|_{W^{4-3/p}_p}\leq M<\infty$;
\vspace{2mm}
\item[(ii)] $|[\![d(u(t))\partial_\nu u(t)]\!]|_{W^{2-6/p}_p}\leq M<\infty$ in case $\gamma\equiv0$;
\vspace{2mm}
\item[(iii)] $|l(u(t))|\geq 1/M$ in case $\gamma\equiv 0$;
\vspace{2mm}
\item[(iv)] $u(t)\ge 1/M$;
\vspace{2mm}
\item[(v)] $\Gamma(t)$ satisfies a uniform ball condition.
\end{itemize}
Then $t_*=\infty$, i.e.\ the solution exists globally. If its
limit set contains a stable equilibrium
$(u_\infty,\Gamma_\infty)\in\cE$, i.e.\
$\varphi^\prime(u_\infty)<0$, then it converges in $\cSM_\gamma$
to this equilibrium. On the contrary, if $(u(t),\Gamma(t))$ is a
global solution in $\cSM_\gamma$ which converges to an equilibrium
$(u_*,\Gamma_*)$ with $l(u_*)\neq0$ in case $\gamma\equiv0$ in
$\cSM_\gamma$ as $t\to\infty$, then the properties $(i)$-$(v)$ are
valid.
\end{theorem}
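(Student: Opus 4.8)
The plan is to establish the two implications separately. For the forward direction I first derive $t_*=\infty$ from hypotheses (i)--(v) by a continuation argument, and then exploit the Lyapunov structure together with \thmref{stability}(a) to obtain convergence when a stable equilibrium lies in the limit set; the converse will then follow directly from the way the state manifold $\cSM_\gamma$ has been set up.

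\emph{Global existence.} Assume, for contradiction, that $t_*<\infty$. By the uniform ball condition (v) the interface undergoes no topological change, no boundary contact and no contraction to a point on $[0,t_*)$, so, fixing $t_0\in(0,t_*)$ and a real-analytic reference hypersurface $\Sigma$ close to $\Gamma(t_0)$, every $\Gamma(t)$ with $t\in[t_0,t_*)$ is a graph $p\mapsto p+\rho(t,p)\nu_\Sigma(p)$ over $\Sigma$ with $\rho(t)$ small in $C^2(\Sigma)$, and the Hanzawa transform casts the solution in the form \eqref{transformed-decomposed}. Condition (i) bounds $(v(t),\rho(t))$ in $W^{2-2/p}_p(\Omega\setminus\Sigma)\times W^{4-3/p}_p(\Sigma)$ uniformly on $[t_0,t_*)$, condition (ii) additionally bounds $\partial_t\rho(t)$ in $W^{2-6/p}_p(\Sigma)$ when $\gamma\equiv0$, and conditions (iii), (iv) keep $l(u(t))$ and $u(t)$ away from zero, so that the coefficients $\kappa(v),d(v),l(v),\gamma(v)$ stay in the admissible classes uniformly. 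Hence the length of the existence interval provided by the local theory (\thmref{wellposed1}, \thmref{wellposed2}, \corref{wellposed3}) for data in this fixed bounded admissible set can be taken to be some $\ell>0$ independent of the base point; covering $[t_0,t_*)$ by finitely many intervals of length $\ell$ and using maximal $L_p$-regularity of the principal linearization (\thmref{linpeq0}, \thmref{linpneq0}) on each, one bounds $(v,\rho)$ in $\EE(t_0,t_*)$. Since $\EE(t_0,t_*)\hookrightarrow C([t_0,t_*];W^{2-2/p}_p(\Omega\setminus\Sigma)\times W^{4-3/p}_p(\Sigma))$, and, for $\gamma\equiv0$, $\partial_t\rho\in C([t_0,t_*];W^{2-6/p}_p(\Sigma))$ by \eqref{specialembedding}, the limit $\lim_{t\to t_*}(u(t),\Gamma(t))$ exists in $\cSM_\gamma$. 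Restarting the semiflow at this limit continues the solution beyond $t_*$, contradicting maximality; thus $t_*=\infty$.

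\emph{Convergence.} The same a priori bounds, now valid on $\R_+$, together with the $\EE$-bound above (giving equicontinuity of $t\mapsto(u(t),\Gamma(t))$) and the compact embeddings of $W^{2-2/p}_p$ and $W^{4-3/p}_p$ into weaker spaces, show that the orbit $\{(u(t),\Gamma(t)):t\ge0\}$ is relatively compact in $\cSM_\gamma$; hence its $\omega$-limit set $\omega(u_0,\Gamma_0)$ is nonempty, compact, connected and invariant. Energy is conserved along the flow (see the Introduction), and $-\Phi$ is a strict Lyapunov functional whose only critical points on a prescribed energy level are equilibria, so LaSalle's invariance principle gives $\omega(u_0,\Gamma_0)\subset\cE$. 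If $\omega(u_0,\Gamma_0)$ contains a stable equilibrium $(u_\infty,\Gamma_\infty)$ --- by \thmref{stability} this forces $\Gamma_\infty$ connected and $\zeta_\ast<1$, equivalently $\varphi^\prime(u_\infty)<0$ by \thmref{lin-stability}(i),(iv) --- then the solution eventually enters any prescribed neighborhood of $(u_\infty,\Gamma_\infty)$, in particular the one furnished by \thmref{stability}(a); applying that theorem to the solution restarted at such an entry time shows that it converges to some stable equilibrium, and since $(u_\infty,\Gamma_\infty)$ belongs to the then-singleton limit set, this limit must be $(u_\infty,\Gamma_\infty)$ itself.

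\emph{Converse, and the main obstacle.} Conversely, suppose the global solution converges to an equilibrium $(u_*,\Gamma_*)$ in $\cSM_\gamma$, with $l(u_*)\neq0$ in case $\gamma\equiv0$. Convergence in the state manifold is convergence in the chart norms, which are precisely $W^{2-2/p}_p(\Omega\setminus\Gamma)\times W^{4-3/p}_p(\Gamma)$ and, when $\gamma\equiv0$, also carry the $W^{2-6/p}_p$-norm of $[\![d\partial_\nu u]\!]$ (cf.\ \eqref{phasemanif0}, \eqref{lin-pm0}); this yields (i) and (ii). Every point of the orbit lies in $\cSM_\gamma$, so $u(t)>0$ on $\bar\Omega$ for all $t$ and $l(u(t))\neq0$ on $\Gamma(t)$ when $\gamma\equiv0$; since $u(t)\to u_*>0$ and $l(u(t))\to l(u_*)\neq0$ uniformly, while on any compact initial interval $u(t)$ and $l(u(t))$ are bounded away from zero by continuity, we obtain (iv) and (iii). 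Finally $\Gamma_*$, a union of spheres of radius $R_*$, satisfies a uniform ball condition with some radius $r_*>0$; convergence in $\cMH^2$ (hence of the principal curvatures and in the Hausdorff distance) transfers a ball condition with radius $r_*/2$ to $\Gamma(t)$ for large $t$, and continuity of $t\mapsto\Gamma(t)$ gives a positive lower bound for the ball radius over compact intervals, so (v) holds. The delicate point of the whole argument is the global-existence step: one must show that the local existence time does not shrink to zero along the flow and that the a priori bounds (i), (ii) bootstrap, via maximal regularity, to a bound in $\EE(t_0,t_*)$ yielding a genuine limit at $t_*$ in the strong topology of $\cSM_\gamma$ rather than merely in a weaker one; the relative compactness of the orbit needed for the LaSalle argument rests on the same upgrade.
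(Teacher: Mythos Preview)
Your argument has the right architecture, but there is a genuine gap in the global-existence step, and it propagates into the compactness claim needed for LaSalle.

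You fix a \emph{single} reference hypersurface $\Sigma$ near $\Gamma(t_0)$ and assert that every $\Gamma(t)$, $t\in[t_0,t_*)$, is a $C^2$-small graph over $\Sigma$. The uniform ball condition (v) does not give this: it controls curvature, topology, and boundary contact, but not drift. A family of spheres $S_R(x(t))$ with $x(t)$ wandering satisfies (v) uniformly yet eventually leaves any fixed chart. Without this, the space $\EE(t_0,t_*)$ you work in, and the subsequent ``bound in $\EE(t_0,t_*)$ $\Rightarrow$ limit at $t_*$ in $\cSM_\gamma$'' step, are not defined. The paper avoids the issue by exploiting that boundedness of $\{\Gamma(t)\}$ in $W^{4-3/p}_p$ yields relative compactness in $W^{4-3/p-\varepsilon}_p$, so the orbit of interfaces can be covered by \emph{finitely many} charts $\Sigma_k$; one then works chart by chart, not globally over one $\Sigma$.

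The second gap is the upgrade from compactness in the weaker topology to compactness in $\cSM_\gamma$. You write that ``compact embeddings \dots\ show that the orbit is relatively compact in $\cSM_\gamma$,'' but the compact embeddings only yield relative compactness in $W^{2-2/p-\varepsilon}_p\times W^{4-3/p-\varepsilon}_p$, and you yourself flag in your closing paragraph that the strong-topology upgrade is ``the delicate point.'' The paper's missing ingredient here is \corref{wellposed3} used with $\mu<1$: data in the $\varepsilon$-weaker trace spaces launch solutions which \emph{regularize instantly} into $\EE(\delta,\tau)$ and hence land in $\cSM_\gamma$; combining this with continuous dependence on initial data in the weaker topology is what produces relative compactness of the orbit in $\cSM_\gamma$ itself, and simultaneously gives the uniform restart time that forces $t_*=\infty$. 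Your invocation of \corref{wellposed3} only uses it as an alternative to \thmref{wellposed1}/\thmref{wellposed2} for data already bounded in the strong norm, which does not exploit its actual purpose. The convergence and converse parts of your argument are fine once these two points are repaired.
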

\begin{proof}
Assume that assertions (i)--(v) are valid. Then
$\Gamma([0,t_*))\subset W^{4-3/p}_p(\Omega,r)$ is bounded, hence
relatively compact in $W^{4-3/p-\ve}_p(\Omega,r)$. (See
\eqref{definition-W-r} for the definition of $W^{s}_p(\Omega,r)$.)

Thus we may cover this set by finitely many balls with centers $\Sigma_k$ real analytic in such a way that
${\rm dist}_{W^{4-3/p-\ve}_p}(\Gamma(t),\Sigma_j)\leq \delta$ for some $j=j(t)$, $t\in[0,t_*)$. Let $J_k=\{t\in[0,t_*):\, j(t)=k\}$. Using for each $k$ a Hanzawa-transformation $\Xi_k$, we see that the pull backs $\{u(t,\cdot)\circ\Xi_k:\, t\in J_k\}$ are bounded in $W^{2-2/p}_p(\Omega\setminus \Sigma_k)$, hence relatively compact in $W^{2-2/p-\ve}_p(\Omega\setminus\Sigma_k)$. Employing now Corollary \ref{wellposed3}  we obtain solutions
 $(u^1,\Gamma^1)$ with initial configurations $(u(t),\Gamma(t))$ in the state manifold on a common
time interval, say $(0,\tau]$, and by uniqueness we have
$$(u^1(\tau),\Gamma^1(\tau))=(u(t+\tau),\Gamma(t+\tau)).$$
Continuous dependence implies then relative compactness of
$(u(\cdot),\Gamma(\cdot))$ in $\cSM_\gamma$. In particular,
$t_*=\infty$ and the orbit $(u,\Gamma)(\R_+)\subset\cSM_\gamma$ is
relatively compact. The negative total entropy is a strict Lyapunov
functional, hence the limit set $\omega(u,\Gamma)\subset
\cSM_\gamma$ of a solution is contained in the set $\cE$ of
equilibria.  By compactness $\omega(u,\Gamma)\subset \cSM_\gamma$ is
non-empty, hence the solution comes close to $\cE$, and stays there.
Then we may apply the convergence result Theorem \ref{stability}.
The converse is proved by a compactness argument.
\end{proof}
\begin{remark}
We believe that the extra assumption $\varphi^\prime(u_\infty)<0$
in Theorem \ref{Qual} can be replaced by
$\varphi^\prime(u_\infty)\neq0$. However, to prove this requires
more technical efforts, and we refrain from doing this here.
\end{remark}
\bigskip
\noindent
{\bf Acknowledgment:}
J.P.\ and R.Z.\ express their thanks for hospitality
to the Department of Mathematics at Vanderbilt University, where important parts of this work originated.
We thank Mathias Wilke for helpful discussions.
Moreover, we thank one of the anonymous reviewers for constructive and insightful remarks, 
and for pointing out some additional references related to the subject. 


\end{document}